\documentclass[dvipsnames,11pt]{amsart}
\usepackage[margin=1.35in]{geometry}
\usepackage[OT2,T1]{fontenc}
\usepackage[utf8]{inputenc}
\usepackage[dvipsnames]{xcolor}
\usepackage{tikz-cd}
\usetikzlibrary{decorations.pathmorphing}
\usepackage{stmaryrd}
\SetSymbolFont{stmry}{bold}{U}{stmry}{m}{n}
\usepackage[pagebackref]{hyperref}
\hypersetup{
    colorlinks=true,
    linkcolor=blue,
    filecolor=magenta,  
    urlcolor=cyan,
    citecolor=ForestGreen,
    pdfpagemode=FullScreen,
    }
\hypersetup{pdfpagemode=FullScreen,colorlinks=true,citecolor=ForestGreen}
\usepackage[alphabetic]{amsrefs}
\usepackage{microtype,amsmath,amsfonts,amssymb,amsthm,mathrsfs,bm}
\usepackage[scr=rsfs,cal=cm]{mathalfa}
\usepackage[]{spectralsequences}

\usepackage{bbm}
\usepackage{dsfont}
\usepackage{mathptmx}   % selects Times Roman as basic font
\usepackage{helvet}     % selects Helvetica as sans-serif font

\DeclareRobustCommand{\coprod}{\mathop{\text{\fakecoprod}}}
\newcommand{\fakecoprod}{%
  \sbox0{$\prod$}%
  \smash{\raisebox{\dimexpr.9625\depth-\dp0}{\scalebox{1}[-1]{$\prod$}}}%
  \vphantom{$\prod$}%
}

\theoremstyle{plain}
\newtheorem{theorem}[]{Theorem}
\newtheorem{proposition}[theorem]{Proposition}
\newtheorem{lemma}[theorem]{Lemma}

\newtheorem{corollary}[theorem]{Corollary}

\theoremstyle{definition}
\newtheorem{definition}[]{Definition}
\newtheorem{instance}[definition]{Example}
\newtheorem{situation}[subsection]{}
\newtheorem{remark}[definition]{Remark}
\newtheorem{assumption}{Hypothesis}

\newtheorem*{notation}{Notation}

\usepackage{enumitem}
\numberwithin{theorem}{section}
\numberwithin{definition}{section}
\numberwithin{equation}{section}
\setlist[itemize]{leftmargin=2em}

\setcounter{tocdepth}{1}
\setcounter{section}{-1}

\linespread{1.08}

\author{Tsung-Ju Lee}
\email{tsungju@gs.ncku.edu.tw}
\address{Department of Mathematics\\ 
National Cheng Kung University\\
No.~1, Daxue Rd.\\ 
East District, Tainan 70101, Taiwan.}

\title{Finite distance problem on the moduli of
non-K\"{a}hler Calabi--Yau \texorpdfstring{\(\partial\bar{\partial}\)}{ddbar}-threefolds}

\date{\today}
\keywords{non-K\"{a}hler Calabi--Yau manifolds, the \(\partial\bar{\partial}\)-lemma, limiting mixed Hodge structures, period-map metrics}
\subjclass{32Q25, 32G05, 14C30}

\begin{document}
\begin{abstract}
In this article, we study the finite distance problem 
with respect to the period-map metric on the moduli of
non-K\"{a}hler Calabi--Yau \(\partial\bar{\partial}\)-threefolds via Hodge theory.
We extended C.-L.~Wang's finite distance criterion for one-parameter degenerations
to the present setting. As a byproduct, we also obtained a
sufficient condition for a non-K\"{a}hler Calabi--Yau to support 
the \(\partial\bar{\partial}\)-lemma which generalizes
the results by Friedman and Li. We also proved that the non-K\"{a}hler Calabi--Yau
threefolds constructed by Hashimoto and Sano support 
the \(\partial\bar{\partial}\)-lemma.
\end{abstract}
\maketitle
\tableofcontents

\section{Introduction}{}
\subsection{Motivations}
Let \(Y\) be a three-dimensional Calabi--Yau (CY) manifold
(for instance a quintic threefold in \(\mathbf{P}^{4}\))
and \(C_{1},\ldots,C_{r}\) be disjoint smooth rational curves
in \(Y\) whose normal bundle \(\mathcal{N}_{C_{i}\slash Y}\) is
isomorphic to
\(\mathscr{O}(-1)\oplus\mathscr{O}(-1)\).
One can construct
a contraction \(\pi\colon Y\to\bar{X}\) where \(\bar{X}\) is a
singular threefold having
\(r\) ordinary double points (ODPs) \(p_{1},\ldots,p_{r}\)
whose pre-images under \(\pi\) are \(C_{1},\ldots,C_{r}\).
If one further assumes that \([C_{1}],\ldots,[C_{r}]\) span \(\mathrm{H}_{2}(X;\mathbb{C})\)
and there are \(m_{1},\ldots,m_{r}\in\mathbb{Q}\) such that
\begin{equation}
\sum_{i=1}^{r} m_{i} [C_{i}] = 0 \in\mathrm{H}_{2}(X;\mathbb{C})~\mbox{with}~m_{i}\ne 0~\mbox{for all}~i,
\end{equation}
the result of Friedman 
\cite{1986-Friedman-simultaneous-resolution-of-threefold-double-points} implies that
\(\bar{X}\) is smoothable.
Let \(X\) be a smoothing. 
We thus obtain 
a \emph{conifold transition} \(X\nearrow Y\); 
a complex degeneration \(X\rightsquigarrow\bar{X}\) followed by a resolution \(Y\to \bar{X}\). 
One checks that
\(b_{2}(X)=0\) and in particular, \(X\) is non-K\"{a}hler. 
%We thus obtain 
%a \emph{conifold transition} \(X\nearrow Y\). 
The construction was firstly described by H.~Clemens around 1985. 
Based on this, M.~Reid speculated that there could be a 
single irreducible moduli space of possibly non-K\"{a}hler CY threefolds
such that any CY threefold can be connected to 
another member in this moduli through conifold transitions;
this is also known as Reid's fantasy \cite{1987-Reid-the-moduli-space-of-3-folds-with-k-0-may-nevertheless-be-irreducible}.
Here, by a non-K\"{a}hler CY manifold we mean a compact complex manifold
with trivial canonical bundle.
From this perspective, one inevitably 
bumps into non-K\"{a}hler CY manifolds and it 
is of importance to investigate their moduli spaces.

In the K\"{a}hler regime, 
Yau's theorem \cite{1978-Yau-on-the-ricci-curvature-of-a-compact-kahler-manifold-and-the-complex-monge-ampere-equation-i}, among other things, produces
a canonical metric on the moduli space of \emph{polarized} CY manifolds, 
called the \emph{Weil--Petersson} (WP) metric.
To explore the structure of moduli spaces, one may 
study the WP geometry on them.
%; for instance, 
%the completeness problem of the metric, geodesics with respect to the metric, or 
%(the asymptotes of) the curvatures.
It is known that the WP metric is incomplete in general
as Candelas \textit{et al}.~found
a nodal CY threefold having finite WP distance
\cite{1990-Candelas-Green-Hubsch-rolling-among-calabi-yau-vacua}. 
Later, C.-L.~Wang gave a Hodge-theoretic criterion for finite distance fibers
in the case of one-parameter degenerations 
\cites{1997-Wang-on-the-incompleteness-of-the-weil-petersson-metric-along-degenerations-of-calabi-yau-manifolds,2003-Wang-quasi-hodge-metrics-and-canonical-singularities}
and proposed a finite distance conjecture for higher-dimensional bases.
%The conjecture has remained largely open until recently;
The author subsequently verified Wang's conjecture up to a codimension
two subset in the moduli space
\cite{2018-Lee-a-Hodge-theoretic-criterion-for-finite-WP-degenerations-over-a-higher-dimensional-base}. 

Due to the absence of K\"{a}hler metrics,  
the standard tools such as Hodge theory do not apply
when we study the moduli spaces of non-K\"{a}hler CY manifolds.
Nevertheless, R.~Friedman \cite{2019-Friedman-the-ddbar-lemma-for-general-clemens-manifolds}
and C.~Li \cite{2022-Li-polarized-hodge-structures-for-clemens-manifolds}
showed that the smoothing \(X\) described in the previous paragraph
supports the \(\partial\bar{\partial}\)-lemma,
which allows us to define Hodge decomposition 
on their cohomology groups. 
%Despite \(X\) is non-K\"{a}hler, 
Besides, it is proven by J.~Fu, J.~Li, and S.-T.~Yau 
that such an \(X\) carries a
balanced metric \cite{2012-Fu-Li-Yau-balanced-metrics-on-non-kahler-calabi-yau-threefolds}. 
Recall that a hermitian metric on a compact
complex manifold of complex dimension \(n\) is called \emph{a balanced metric} if its 
metric two form \(\omega\)
satisfying the equation \(\mathrm{d}\omega^{n-1}=0\).
Based on Fu--Li--Yau's 
balanced structure, T.~Collins, S.~Picard,~and S.-T.~Yau 
showed that the holomorphic tangent bundle of \(X\)
admits a Hermitian--Yang--Mills connection \cite{2024-Collins-Picard-Yau-stability-of-the-tangent-bundle-through-conifold-transitions}.
T.~Collins, S.~Gukov, S.~Picard, and S.-T.~Yau also
constructed special Lagrangian spheres in \(X\)
\cite{2023-Collins-Gukov-Picard-Yau-special-lagrangian-cycles-and-calabi-yau-transitions}.

Non-K\"{a}hler CY \(\partial\bar{\partial}\)-manifolds have been extensively studied
in the literature. Let \(W\) be a CY \(\partial\bar{\partial}\)-manifold
of complex dimension \(n\).
D.~Popovici proved that both the
unobstructedness theorem and local Torelli theorem hold
for \(W\)
\cite{2019-Popovici-holomorphic-deformations-of-balanced-calabi-yau-d-dbar-manifolds}.
Moreover, if the second Hodge--Riemann bilinear relation holds on 
\(\mathrm{H}^{n}(W;\mathbb{C})=\oplus_{p+q=n}\mathrm{H}^{p,q}(W)\),
we can then pullback the corresponding 
Fubini--Study metric on \(\mathbf{P}\mathrm{H}^{n}(W;\mathbb{C})\)
to the moduli space \(S\)
via the local Torelli immersion \(S\to \mathbf{P}\mathrm{H}^{n}(W;\mathbb{C})\).
Note that
we do not need any metrics to define the pairing on the middle cohomology group.
In this manner, we obtain the so-called \emph{period-map metric} on \(S\)
(cf.~\cite{2019-Popovici-holomorphic-deformations-of-balanced-calabi-yau-d-dbar-manifolds}). 
Taking balanced metrics into account, D.~Popovici also introduced several variants of Weil--Petersson metrics on the
moduli of compact CY \(\partial\bar{\partial}\)-manifolds
and studied the relationship among them
\cite{2019-Popovici-holomorphic-deformations-of-balanced-calabi-yau-d-dbar-manifolds}.

\subsection{Statements of the main results}
The aim of this note is to study the finite distance problem 
with respect to the period-map metric for
one-parameter degenerations.

Let \(f\colon\mathcal{X}\to\Delta\) be a one-parameter degeneration of
\(\partial\bar{\partial}\)-manifolds of complex dimension \(n\).
We may assume that \(f\) is a semi-stable family by a finite base change and blow-ups;
these operations do not affect the finite distance property.
Put \(E:=f^{-1}(0)\) and let \(E_{1},\ldots,E_{m}\) be irreducible components of \(E\).
Suppose that any intersection of \(E_{i}\)'s supports the \(\partial\bar{\partial}\)-lemma.
In which case, 
following Steenbrink \cite{1975-Steenbrink-limits-of-hodge-structures}, one can define 
the limiting 
mixed Hodge structure \((\mathcal{F}^{\bullet}_{\mathrm{lim}},\mathcal{W}(M)_{\bullet},
\mathrm{H}^{n}(X;\mathbb{C}))\) where \(X\) is a general fiber. Let \(T\) be
the monodromy operator on \(\mathrm{H}^{n}(X;\mathbb{C})\) 
for \(f\) and \(N:=\log T\) be the nilpotent operator.
Then \(N\) is of type \((-1,-1)\) with respect to the limiting mixed Hodge structure.
Under another mild assumption
\begin{assumption}
\label{assumption:isomorphism}
The nilpotent operator \(N\) induces an isomorphism
between relevant quotients of
the monodromy weight filtration, namely
\begin{eqnarray*}
N^{k}\colon 
\mathrm{Gr}^{\mathcal{W}(M)}_{n+k}\mathrm{H}^{n}(X;\mathbb{C})\to 
\mathrm{Gr}^{\mathcal{W}(M)}_{n-k}\mathrm{H}^{n}(X;\mathbb{C})
\end{eqnarray*}
is an \emph{isomorphism} for each \(0\le k\le n\).
\end{assumption}
\noindent we will prove
\begin{theorem}[= Theorem \ref{thm_infinite-distance}]
Assume Hypothesis \ref{assumption:isomorphism}. Then
\(0\in\Delta\) has infinite distance with respect to the period-map metric
if \(N\mathcal{F}^{n}_{\mathrm{lim}}\ne 0\).
\end{theorem}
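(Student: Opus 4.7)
The plan is to adapt C.-L.~Wang's asymptotic analysis of the Weil--Petersson metric to the \(\partial\bar{\partial}\)-setting. The crucial input is the polarized limit MHS on \(\mathrm{H}^{n}(X;\mathbb{C})\) constructed earlier in the paper via the \(\partial\bar{\partial}\)-lemma on the intersections \(E_{I}\) and a Steenbrink-type argument; under Hypothesis~\ref{assumption:isomorphism}, this LMHS is polarized in the precise sense required by Schmid's nilpotent orbit and \(SL_{2}\)-orbit theorems, which I would invoke as a black box.

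First I would pick \(a_{0}\in\mathcal{F}^{n}_{\mathrm{lim}}\) with \(Na_{0}\neq 0\), possible by the hypothesis \(N\mathcal{F}^{n}_{\mathrm{lim}}\ne 0\), and extend it to a local holomorphic section \(\Omega(t)\) of \(\mathcal{F}^{n}\to\Delta\). Pulled back to the upper half-plane via \(t=e^{2\pi i\tau}\), the nilpotent orbit theorem gives \(\Omega(\tau)=e^{\tau N}\,a(t)\) with \(a(0)=a_{0}\). In the polarized LMHS, a short check on the Deligne bigrading shows that \(\mathcal{F}^{n}_{\mathrm{lim}}\cap\mathcal{W}(M)_{n}\subset I^{n,0}\), which is annihilated by \(N\); hence \(Na_{0}\ne 0\) forces \(a_{0}\in\mathcal{W}(M)_{n+k}\setminus\mathcal{W}(M)_{n+k-1}\) for some \(k\geq 1\). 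Schmid's \(SL_{2}\)-orbit theorem then yields
\[
\|\Omega(t)\|^{2}\ =\ c\,(-\log|t|)^{k}\bigl(1+o(1)\bigr),\qquad c>0,
\]
where \(\|\,\cdot\,\|^{2}\) denotes the positive Hermitian pairing induced by the polarization \(Q\) on the top Hodge line, available in the limit thanks to Hypothesis~\ref{assumption:isomorphism}.

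Finally, Popovici's period-map metric admits, locally near \(t=0\), the K\"{a}hler potential \(-\log\|\Omega(t)\|^{2}\) up to bounded plurisubharmonic terms (this is the Fubini--Study pullback computed in the holomorphic trivialization \(\Omega\)), so a direct computation produces
\[
g_{t\bar{t}}\ \sim\ \frac{k}{4\,|t|^{2}(\log|t|)^{2}}\qquad\text{as }t\to 0.
\]
The length of a radial path \(t=re^{i\theta_{0}}\) as \(r\to 0^{+}\) is therefore bounded below by a positive multiple of \(\int_{0}^{r_{0}}\frac{dr}{r\,|\log r|}\), which diverges like \(\log|\log r|\). Hence \(0\in\Delta\) lies at infinite period-map distance. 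The main technical point is not the length integral itself---a standard log-log divergence once \(k\geq 1\)---but rather the transfer of Schmid's machinery to the non-K\"{a}hler setting; this reduces, via the polarized LMHS constructed in the preceding sections, to purely Lie-theoretic asymptotics for polarized nilpotent orbits, and is the step where Hypothesis~\ref{assumption:isomorphism} is indispensable.
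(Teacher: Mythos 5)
Your overall strategy (reduce to an asymptotic lower bound for the metric of the form $c\,y^{-2}$ with $y=-\log|t|$ and integrate along a ray) matches the paper's, and your final length computation is fine. But there is a genuine gap at the central step: you invoke Schmid's nilpotent orbit and $SL_{2}$-orbit theorems ``as a black box'' on the grounds that the limiting mixed Hodge structure is ``polarized in the precise sense required.'' No such polarization is available here. Schmid's theorems take as input a \emph{polarized} variation of Hodge structure over $\Delta^{\ast}$, and the whole difficulty of the non-K\"ahler $\partial\bar{\partial}$-setting --- stated explicitly in the introduction --- is that one does not know whether the second Hodge--Riemann bilinear relation holds on $\mathrm{H}^{n-1,1}(X)$. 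Hypothesis \ref{assumption:isomorphism} only asserts that $N^{k}$ induces isomorphisms on the weight-graded pieces; in the classical theory this is a \emph{consequence} of polarization, but it does not imply one, and it does not license the $SL_{2}$-orbit asymptotics or the positivity of your constant $c$. Indeed, the paper's polarization results (Proposition \ref{prop:finite-distance-polarized}, Theorem \ref{theorem:polarized-hs}) are proved only later, only for $n=3$, only under the stronger Hypothesis \ref{assumption:com-kahler}, and only in the complementary case $N\mathcal{F}^{3}_{\mathrm{lim}}=0$; they are not available as input to the present theorem.

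The paper circumvents this entirely by an elementary computation in the spirit of Wang. Writing $\Omega_{z}=e^{zN}\mathbf{a}(t)$ with $\mathbf{a}(t)=a_{0}+a_{1}t+\cdots$ and $d=\min\{k:N^{k+1}a_{0}=0\}$, one expands
\begin{equation*}
\tilde{Q}(e^{zN}\mathbf{a}(t),\overline{e^{zN}\mathbf{a}(t)})
=\tilde{Q}(e^{2\sqrt{-1}yN}a_{0},\bar{a}_{0})+h=p(y)+h,
\end{equation*}
where $h$ decays exponentially and $p$ is a polynomial of degree $d$ whose leading coefficient $\tilde{Q}(a_{0},N^{d}\bar{a}_{0})$ is nonzero. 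The nonvanishing uses only the \emph{first} bilinear relation in the limit, $Q(\mathcal{F}^{n}_{\mathrm{lim}},\mathcal{F}^{1}_{\mathrm{lim}})=0$ (obtained by passing to the limit in $Q(\mathcal{F}^{n}_{t},\mathcal{F}^{1}_{t})=0$), the nondegeneracy of $Q$, and $\dim\mathcal{F}^{0}_{\mathrm{lim}}/\mathcal{F}^{1}_{\mathrm{lim}}=1$ --- no second bilinear relation anywhere. Taking $-\partial_{z}\partial_{\bar{z}}\log$ then gives $d/y^{2}$ plus an exponentially decaying term, which simultaneously proves that the period-map form \emph{is} a metric for $y\gg0$ (a point your argument assumes rather than proves) and yields the divergent length integral. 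If you want to salvage your write-up, replace the appeal to Schmid by this direct expansion; as written, the step ``the LMHS is polarized, hence the $SL_{2}$-orbit theorem applies'' is unsupported and is exactly the assertion the non-K\"ahler setting forbids you to make.
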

Note that the statement above indeed consists of two parts.
First, since 
we do not know whether or not the second Hodge--Riemann bilinear relation holds 
on \(\mathrm{H}^{n-1,1}(X)\), 
we need to show that the period-map metric is an honest metric. 
Fortunately, this can be done when 
\(N\mathcal{F}^{n}_{\mathrm{lim}}\ne 0\).
After resolving this issue, we then compute the distance and
prove that \(0\in\Delta\) has infinite
distance following C.-L.~Wang's approach
\cite{1997-Wang-on-the-incompleteness-of-the-weil-petersson-metric-along-degenerations-of-calabi-yau-manifolds}.

The other case \(N\mathcal{F}^{n}_{\mathrm{lim}}=0\) is somewhat subtler. 
Consider a more stringent assumption.
\begin{assumption}
\label{assumption:com-kahler}
There exists a cohomology class in 
\(\mathrm{H}^{2}(E;\mathbb{R})\) whose restriction to \(E_{i}\)
of \(E\) provides a K\"{a}hler class for \(E_{i}\).
\end{assumption}
This tacitly asserts that each component of \(E\) is K\"{a}hler and
in particular, it implies that any arbitrary intersections of \(E_{i}\)'s are K\"{a}hler and
hence the \(\partial\bar{\partial}\)-lemma holds on them.

Recall that a CY 
\(\partial\bar{\partial}\)-manifold \(X\) is \emph{strict},
if \(\mathrm{H}^{d}(X;\mathcal{O}_{X})=0\)
for \(0<d<n\). We then have the following result
for \(N\mathcal{F}^{n}_{\mathrm{lim}}=0\).
\begin{theorem}[= Proposition \ref{prop:finite-distance-polarized}]
\label{thm:main-2}
Under Hypothesis \ref{assumption:com-kahler},
if the general fiber \(X\) of \(f\) is a strict smooth CY 
\(\partial\bar{\partial}\)-threefold and \(N\mathcal{F}^{3}_{\mathrm{lim}}=0\), then
the second Hodge--Riemann bilinear relation holds
on \(\mathrm{H}^{2,1}(X)\). Consequently, the period-map metric
is truly a metric.
\end{theorem}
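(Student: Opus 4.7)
The plan is to deduce the second Hodge--Riemann bilinear relation on \(\mathrm{H}^{2,1}(X)\) from the polarized limiting mixed Hodge structure produced by Hypothesis~\ref{assumption:com-kahler}, combined with a nilpotent-orbit analysis that exploits \(N\mathcal{F}^{3}_{\mathrm{lim}}=0\) to pin down the Deligne splitting. Under Hypothesis~\ref{assumption:com-kahler}, every stratum of \(E\) is K\"ahler, so Steenbrink's theorem yields a \emph{polarized} LMHS \((\mathcal{F}^{\bullet}_{\mathrm{lim}}, \mathcal{W}(M)_{\bullet}, \mathrm{H}^{3}(X;\mathbb{C}))\) polarized by the intersection pairing \(Q\) and the nilpotent operator \(N\). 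Let \(\{I^{p,q}\}\) denote the Deligne bigrading; it satisfies the symmetries \(i^{p,q} = i^{q,p} = i^{3-p,\,3-q}\), and each primitive part \(P_{3+k}\) of \(\mathrm{Gr}^{\mathcal{W}(M)}_{3+k}\) is polarized by \(Q(\,\cdot\,, N^{k}\,\cdot\,)\) in the Cattani--Kaplan--Schmid sense. Since \(X\) is strict Calabi--Yau, \(\dim\mathcal{F}^{3}_{\mathrm{lim}}=1\); pick a generator \(\Omega_{\infty}\). The condition \(N\Omega_{\infty}=0\) together with the polarization isomorphisms \(N^{q}\colon I^{3,q}\xrightarrow{\sim}I^{3-q,0}\) (valid for \(q\geq 1\)) forces \(\Omega_{\infty}\in I^{3,0}\), so \(i^{3,0}=i^{0,3}=1\). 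By the symmetries the only remaining possibly nonzero Hodge--Deligne numbers are \(i^{2,1}=i^{1,2}\) and \(i^{2,2}=i^{1,1}\).

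Next I would apply Schmid's nilpotent orbit theorem to approximate \(F^{\bullet}(X_{t})\) by \(\exp(zN)\mathcal{F}^{\bullet}_{\mathrm{lim}}\) for \(z = \log t /(2\pi i)\). Using \(\mathrm{H}^{2,1}(X_{t}) = F^{2}(X_{t})\cap\overline{F^{1}(X_{t})}\) together with the vanishings \(N I^{3,0}=N I^{2,1}=0\) (forced by the previous step), one checks that every element of \(\mathrm{H}^{2,1}(X_{t})\) has the form \(\alpha = \eta_{1} + \eta_{2} + z\,N\eta_{2}\) with \(\eta_{1}\in I^{2,1}\) and \(\eta_{2}\in I^{2,2}\). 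The first HR (vanishing of \(Q\) between incompatible \(I^{p,q}\)) and the \(Q\)-antisymmetry of \(N\) then yield
\[
-iQ(\alpha,\bar\alpha) \;=\; -i\,Q(\eta_{1},\bar\eta_{1}) \;-\; 2y\,Q(\eta_{2}, N\bar\eta_{2}), \qquad y := -\tfrac{\log|t|}{2\pi} > 0.
\]
HR on the pure weight-3 piece \(P_{3}\) (polarized by \(Q\)) gives \(-iQ(\eta_{1},\bar\eta_{1})>0\) on \(\eta_{1}\neq 0\); HR on the primitive weight-4 piece \(P_{4}=I^{2,2}\) (polarized by \(Q(\,\cdot\,, N\,\cdot\,)\) with the sign dictated by the central weight \(n=3\)) gives \(-Q(\eta_{2}, N\bar\eta_{2})>0\) on \(\eta_{2}\neq 0\). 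Summing, \(-iQ(\alpha,\bar\alpha)>0\) on \(\mathrm{H}^{2,1}(X_{t})\setminus\{0\}\) for \(t\) near \(0\); openness of HR together with connectedness of \(\Delta^{\ast}\) then extends the positivity to every general fiber.

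With the second HR on \(\mathrm{H}^{2,1}(X)\) secured, the Hermitian form on \(\mathrm{H}^{3}(X;\mathbb{C})\) is positive definite on its Hodge pieces, so the Fubini--Study form on \(\mathbf{P}\mathrm{H}^{3}(X;\mathbb{C})\) is a genuine K\"ahler form; its pullback via Popovici's local Torelli immersion is therefore a true, positive-definite metric on the moduli, giving the consequence. The main technical obstacle is the bookkeeping of the Cattani--Kaplan--Schmid sign conventions: one must verify that the polarization on the primitive weight-4 piece \(P_{4}\) is normalized so that \(-2y\,Q(\eta_{2}, N\bar\eta_{2})\) is the positive contribution (this is the same sign choice that recovers the classical \(-iQ>0\) for K\"ahler Calabi--Yau threefolds), and check that the higher-order corrections \(\widetilde{\mathcal{F}}^{\bullet}(t)-\mathcal{F}^{\bullet}_{\mathrm{lim}} = O(t)\) in the nilpotent orbit do not disturb the leading-order positivity.
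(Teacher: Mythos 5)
Your skeleton agrees with the paper's: both arguments decompose \(\mathcal{F}^{2}_{\mathrm{lim}}\) according to Deligne's splitting, untwist by \(e^{zN}\), and derive positivity of \(-\sqrt{-1}Q(\cdot,\bar{\cdot})\) on \(\mathrm{H}^{2,1}(\mathcal{X}_{t})\) from two inputs: a polarized Hodge structure on \(\mathrm{Gr}_{3}^{\mathcal{W}(M)}\), and positivity of \(Q(\cdot,N\bar{\cdot})\) on \(\mathrm{Gr}_{4}^{\mathcal{W}(M)}=I^{2,2}\), the latter contributing the term growing like \(2\operatorname{Im}(z)\). Your determination of the limiting Hodge diamond and your leading-order formula \(-\sqrt{-1}Q(\alpha,\bar{\alpha})=-\sqrt{-1}Q(\eta_{1},\bar{\eta}_{1})-2y\,Q(\eta_{2},N\bar{\eta}_{2})\) are both correct. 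The first genuine gap is your appeal to Schmid's nilpotent orbit theorem to approximate \(F^{\bullet}(\mathcal{X}_{t})\) by \(e^{zN}\mathcal{F}^{\bullet}_{\mathrm{lim}}\): Schmid's theorem is a statement about \emph{polarized} variations of Hodge structure, and the polarization of this variation is precisely what is being proven, so the citation is circular (or at least unavailable) in the non-K\"{a}hler setting. The paper avoids this by using Deligne's canonical extension instead: local freeness of \(R^{q}f_{\ast}\Omega^{p}_{\mathcal{X}\slash\Delta}(\log E)\) (from Steenbrink's construction under the \(\partial\bar{\partial}\)-hypothesis on the strata) lets one extend an \(I^{p,q}\)-adapted basis of \(\mathcal{F}^{2}_{\mathrm{lim}}\) to a holomorphic frame of \(e^{-zN}\mathcal{F}^{2}\) over \(\Delta\); the coefficients then differ from their limits by holomorphic functions vanishing at \(t=0\), hence by functions in the class \(\mathbf{h}\), which is exactly the exponential decay needed to beat the linear growth in \(y\) (the Schur-complement step in the paper's proof of Theorem \ref{theorem:polarized-hs}).

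The second gap is the assertion that Hypothesis \ref{assumption:com-kahler} hands you, "by Steenbrink's theorem," a CKS-polarized limiting mixed Hodge structure whose primitive graded pieces are polarized by \(Q(\cdot,N^{k}\cdot)\). When the general fiber is non-K\"{a}hler this is not off the shelf, and establishing exactly these two positivity statements is the main technical content of the paper: it builds the pairing on the graded pieces from Fujisawa's trace morphism, identifies it with the topological pairing on \(X\) via the Clemens--Schmid sequence, and then uses \(\mathrm{H}^{5}(X;\mathbb{C})=0\) (strictness) together with a Lefschetz-operator diagram to show that classes in \(\mathrm{Gr}_{3}^{\mathcal{W}(M)}\) and \(\mathrm{Gr}_{4}^{\mathcal{W}(M)}\) admit representatives that are primitive on \(E(1)\), resp.\ \(E(2)\) --- the latter step also invoking Hypothesis \ref{assumption:isomorphism} through the inclusion \(\operatorname{im}(a)\subseteq\operatorname{im}(c)\). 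Without an argument of this kind the positivity of \(-Q(\eta_{2},N\bar{\eta}_{2})\) on \(I^{2,2}\), which is the term that must dominate as \(y\to\infty\), is unsupported. (Your final remark on propagating positivity from \(|t|\ll 1\) to all of \(\Delta^{\ast}\) by openness and constancy of the signature is legitimate, since the form is nondegenerate on every \(\partial\bar{\partial}\) fiber by the first bilinear relation, but it is not needed for the statement as formulated.)
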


To prove the theorem, we shall follow C.~Li's
approach in \cite{2022-Li-polarized-hodge-structures-for-clemens-manifolds}.
Let us consider Deligne's splitting \(I^{p,q}\) for 
Steenbrink's limiting mixed Hodge structure 
on \(\mathrm{H}^{3}(X;\mathbb{C})\).
Firstly, we construct a basis for each \(I^{p,q}\) and hence
obtain a basis for \(\mathcal{F}_{\mathrm{lim}}^{2}\). 
Secondly, we extend the proceeding basis to become a local frame 
for Deligne's canonical extension. Finally we untwist the local frame 
to get a (multi-valued) frame for \(\mathcal{F}^{2}_{t}\)
where \(\mathcal{F}^{2}\) is the holomorphic bundle over \(\Delta^{\ast}\)
whose fiber over \(t\) is the vector space
\(\mathrm{H}^{3,0}(\mathcal{X}_{t})\oplus \mathrm{H}^{2,1}(\mathcal{X}_{t})\).
We then check the positivity using the multi-valued frame.

\begin{corollary}[= Theorem \ref{thm:finite-distance}]
In the situation of Theorem \ref{thm:main-2}, \(0\in\Delta\) has finite distance
with respect to the period-map metric.
\end{corollary}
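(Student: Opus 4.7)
The plan is to adapt C.-L.~Wang's one-parameter asymptotic analysis to the \(\partial\bar{\partial}\)-setting. Theorem~\ref{thm:main-2} already secures the first and second Hodge--Riemann bilinear relations on \(\mathrm{H}^{3,0}(X)\) and \(\mathrm{H}^{2,1}(X)\), so the period-map metric is a genuine Hermitian metric on a punctured neighbourhood of \(0\in\Delta\). The task therefore reduces to estimating its coefficient \(g_{t\bar{t}}\) in a way that makes the radial length integral \(\int_{0}^{\varepsilon}\sqrt{g_{r\bar{r}}}\,\mathrm{d}r\) converge.

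First, as in the proof of Theorem~\ref{thm:main-2}, I would use Deligne's canonical extension of the Hodge bundle \(\mathcal{F}^{3}\) over \(\Delta^{\ast}\) to produce a multi-valued local generator
\[
\Omega(t)=\exp\!\Bigl(\tfrac{\log t}{2\pi i}N\Bigr)\tilde{\Omega}(t),
\]
with \(\tilde{\Omega}\) holomorphic on \(\Delta\) and \(\tilde{\Omega}(0)=\Omega_{\mathrm{lim}}\in\mathcal{F}^{3}_{\mathrm{lim}}\). The hypothesis \(N\mathcal{F}^{3}_{\mathrm{lim}}=0\) gives \(N\Omega_{\mathrm{lim}}=0\), so writing \(\tilde{\Omega}(t)=\Omega_{\mathrm{lim}}+t\,\xi(t)\) with \(\xi\) holomorphic yields \(N^{k}\tilde{\Omega}(t)=t\,N^{k}\xi(t)=O(|t|)\) for every \(k\ge 1\). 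Each logarithmic correction \(\tfrac{1}{k!}\bigl(\tfrac{\log t}{2\pi i}\bigr)^{k}N^{k}\tilde{\Omega}(t)\) is therefore of size \(O(|t||\log|t||^{k})\), and \(\Omega(t)\) extends continuously to \(\Omega_{\mathrm{lim}}\) at \(t=0\).

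Second, using that \(N\) is infinitesimally \(Q\)-isometric I would rewrite the period-map potential as
\[
Q\bigl(\Omega(t),\overline{\Omega(t)}\bigr)=Q\bigl(\tilde{\Omega}(t),\exp(icN)\overline{\tilde{\Omega}(t)}\bigr),\qquad c:=\tfrac{\log|t|^{2}}{2\pi}.
\]
The exponential sum is finite because \(N\) is nilpotent, and the \(O(|t|)\) bound on \(N^{k}\tilde{\Omega}(t)\) forces each \(k\ge 1\) contribution to be \(O(|t||\log|t||^{k})\), which vanishes in the limit. The \(k=0\) term converges to \(Q(\Omega_{\mathrm{lim}},\overline{\Omega_{\mathrm{lim}}})\), a nonzero real number of the expected sign by the first Hodge--Riemann bilinear relation at the nilpotent orbit; hence \(Q(\Omega,\bar{\Omega})\) stays bounded below by a positive constant near \(0\).

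Third, differentiating gives \(\partial_{t}\Omega=\exp(\ell N)\bigl(\partial_{t}\tilde{\Omega}+\tfrac{1}{2\pi i}h(t)\bigr)\) with \(\ell=\tfrac{\log t}{2\pi i}\) and \(h(t):=N\tilde{\Omega}(t)/t\) holomorphic, so the same twist identity applied to \(Q(\partial_{t}\Omega,\overline{\partial_{t}\Omega})\) and \(Q(\partial_{t}\Omega,\overline{\Omega})\) produces polynomial bounds in \(|\log|t||\). Combining with the denominator estimate of the previous step, this yields \(g_{t\bar{t}}=O(|\log|t||^{K})\) for some integer \(K\), and \(\int_{0}^{\varepsilon}\sqrt{g_{r\bar{r}}}\,\mathrm{d}r<\infty\). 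The main obstacle, I expect, is the bookkeeping in this last step: one must verify that the superficially divergent factors \((\log t)^{k}/t\) produced by differentiating \(\exp(\ell N)\) are genuinely absorbed both by the vanishing \(N^{k}\tilde{\Omega}=O(|t|)\) and by the usual WP-type cancellation between \(Q(\partial_{t}\Omega,\overline{\partial_{t}\Omega})\,Q(\Omega,\overline{\Omega})\) and \(|Q(\partial_{t}\Omega,\overline{\Omega})|^{2}\), so that only a harmless polylogarithmic blow-up of \(g_{t\bar{t}}\) remains.
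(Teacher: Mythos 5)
Your argument is correct and follows essentially the same route as the paper: both expand the period via the nilpotent orbit \(e^{zN}\mathbf{a}(t)\) coming from Deligne's canonical extension, use \(N a_{0}=0\) to show that the potential \(\tilde{Q}(\Omega,\bar{\Omega})\) tends to the nonzero constant \(\tilde{Q}(a_{0},\bar{a}_{0})\), and then bound the metric coefficient so that a radial path reaching \(0\) has finite length. The only difference is presentational: the paper works in the coordinate \(z\) on the upper half-plane and shows \(-\partial_{z}\partial_{\bar{z}}\log\tilde{Q}\) lies in the exponentially decaying class \(\mathbf{h}\), whereas you work directly in \(t\) and obtain the equivalent (indeed sharper) polylogarithmic bound on \(g_{t\bar{t}}\).
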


As an application, we can use the multi-valued frame to obtain
the following proposition, which slightly generalizes the results of R.~Friedman
\cite{2019-Friedman-the-ddbar-lemma-for-general-clemens-manifolds} and
C.~Li \cite{2022-Li-polarized-hodge-structures-for-clemens-manifolds}.
\begin{proposition}[= Corollary \ref{cor:finite-distance-ddbar}]
Let \(g\colon\mathcal{X}\to\Delta\) be a semi-stable degeneration.
We assume that
the general fiber \(X\) of \(g\) has 
complex dimension \(3\) and satisfies the following conditions.
\begin{itemize}
\item The central fiber of \(f\) is at a finite distance with respect to
the perid-map metric;
\item \(\mathrm{H}^{i}(X;\mathcal{O}_{X})=\mathrm{H}^{0}(X;\Omega_{X}^{j})=0\)
for \(1\le i,j\le 2\).
\end{itemize}
Then under Hypothesis \ref{assumption:isomorphism}, the \(\partial\bar{\partial}\)-lemma holds on \(X\).
\end{proposition}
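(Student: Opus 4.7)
The plan is to combine the contrapositive of Theorem \ref{thm_infinite-distance} with the multi-valued frame constructed in the proof of Theorem \ref{thm:main-2}, and then to feed in the cohomological vanishings in order to pin down the remaining Hodge numbers.

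First I would invoke the contrapositive of Theorem \ref{thm_infinite-distance}: the finite-distance hypothesis, together with Hypothesis \ref{assumption:isomorphism}, forces \(N\mathcal{F}^{3}_{\mathrm{lim}}=0\). This places us in the regime where the multi-valued frame can be built, so I would re-run the construction sketched after Theorem \ref{thm:main-2}: pick bases for the Deligne splitting pieces \(I^{p,q}\) of Steenbrink's limiting mixed Hodge structure on \(\mathrm{H}^{3}(X;\mathbb{C})\), extend them to a local frame of Deligne's canonical extension over \(\Delta\), and untwist through the nilpotent orbit to obtain a multi-valued frame \(\tilde{e}^{\,p,q}_{i}(t)\) of \(\mathcal{F}^{p}_{t}\) on a punctured disc. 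The decisive output, inherited from the proof of Theorem \ref{thm:main-2}, is that these untwisted sections can be realised by smooth \((p,q)\)-forms on the general fibre \(\mathcal{X}_{t}\).

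Next I would feed in the vanishings \(\mathrm{H}^{i}(X;\mathcal{O}_{X})=\mathrm{H}^{0}(X;\Omega_{X}^{j})=0\) for \(1\le i,j\le 2\). Combined with Serre duality, they force \(h^{p,q}_{\bar\partial}(X)=0\) outside the bidegrees \((0,0)\), \((1,1)\), \((2,2)\), \((3,3)\) and the line \(p+q=3\). The Fr\"{o}licher inequality then gives \(b_{1}(X)=b_{5}(X)=0\); the \(E_{1}\)-degeneration check collapses to degrees two, three and four; and in degrees two and four it reduces to the identities \(b_{2}=h^{1,1}_{\bar\partial}\) and \(b_{4}=h^{2,2}_{\bar\partial}\), which follow from Serre duality and the other vanishings. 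In degree three, the multi-valued frame together with its complex conjugate supplies \(b_{3}(X)\) classes of pure Dolbeault bidegree spanning \(\mathrm{H}^{3}(X;\mathbb{C})\), by the LMHS dimension count and \(N\mathcal{F}^{3}_{\mathrm{lim}}=0\); this yields simultaneously \(E_{1}\)-degeneration and purity of the Hodge filtration of weight three. Assembling the pieces gives the \(\partial\bar\partial\)-lemma on \(X\).

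The hard part will be the bidegree assertion: that the untwisted sections \(\tilde{e}^{\,p,q}_{i}(t)\) are of pure Dolbeault type \((p,q)\) on \(\mathcal{X}_{t}\), not merely in \(\mathcal{F}^{p}_{t}\). This is where one must track carefully how \(N\mathcal{F}^{3}_{\mathrm{lim}}=0\) propagates through Deligne's splitting\,---\,a step controlled by Hypothesis \ref{assumption:isomorphism}\,---\,and where the analysis closely parallels that in the proof of Theorem \ref{thm:main-2}.
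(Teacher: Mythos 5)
There is a genuine gap, and you have in fact flagged it yourself: the ``bidegree assertion'' that the untwisted sections are of pure Dolbeault type \((p,q)\) on \(\mathcal{X}_{t}\) is never established, and it cannot be an output of the frame construction. The construction only produces (multi-valued) sections of the filtration bundle \(\mathcal{F}^{p}\), i.e.\ classes in \(\mathrm{F}^{p}\mathrm{H}^{3}(\mathcal{X}_{t};\mathbb{C})\); on a manifold not yet known to satisfy the \(\partial\bar{\partial}\)-lemma there is no a priori decomposition of a de Rham class into pure Dolbeault pieces, so asserting purity of the frame is essentially asserting the conclusion. Your treatment of degrees two and four has the same circularity: \(b_{2}=h^{1,1}_{\bar\partial}\) does not follow from Serre duality and the vanishings alone --- it requires the \(E_{1}\)-degeneration you are in the middle of trying to prove.

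The paper's route avoids both problems. First, \(E_{1}\)-degeneration of the Fr\"{o}licher spectral sequence on the nearby fiber is not derived from the frame at all; it comes from Steenbrink's theory of the semi-stable degeneration (local freeness of \(R^{q}f_{\ast}\Omega^{p}_{\mathcal{X}\slash\Delta}(\log E)\), cf.\ Peters--Steenbrink, Corollary 11.24). Second, from the frame one extracts only the purely linear-algebraic statement \(\mathcal{F}^{2}_{t}\cap\overline{\mathcal{F}^{2}_{t}}=(0)\), proved in the proposition immediately preceding the corollary by showing that the top wedge of the untwisted frame with its conjugate is nonzero for \(\operatorname{Im}(z)\gg 0\); a dimension count using Hypothesis \ref{assumption:isomorphism} then upgrades this to \(\mathrm{F}^{2}\oplus\overline{\mathrm{F}^{2}}=\mathrm{H}^{3}(X;\mathbb{C})\). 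Finally, Friedman's Corollary 1.6 is exactly the formal statement that for a compact complex threefold with \(\mathrm{H}^{i}(X;\mathcal{O}_{X})=\mathrm{H}^{0}(X;\Omega_{X}^{j})=0\) for \(1\le i,j\le 2\), these two facts imply the \(\partial\bar{\partial}\)-lemma. Your opening step (finite distance plus Hypothesis \ref{assumption:isomorphism} forces \(N\mathcal{F}^{3}_{\mathrm{lim}}=0\), which puts the limiting mixed Hodge diamond in the shape needed for the frame construction) is correct and agrees with the paper; to repair the rest, replace the purity claim by the opposedness statement \(\mathcal{F}^{2}_{t}\cap\overline{\mathcal{F}^{2}_{t}}=(0)\) and invoke Friedman's criterion rather than re-deriving it.
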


Hashimoto and Sano constructed smooth non-K\"{a}hler CY threefolds with arbitrarily large
\(b_{2}\) \cite{2023-Hashimoto-Sano-examples-of-non-kahler-calabi-yau-3-folds-with-arbitrarily-large-b2}. 
We may also apply the techniques here to show that
\begin{proposition}[= Theorem \ref{thm:hs-example-ddbar} and Theorem \ref{thm:hs-example-polarized}]
Let \(X\) be the smooth non-K\"{a}hler CY threefold constructed in 
\cite{2023-Hashimoto-Sano-examples-of-non-kahler-calabi-yau-3-folds-with-arbitrarily-large-b2}.
Then \(X\) is a \(\partial\bar{\partial}\)-manifold. Moreover, the second Hodge--Riemann
bilinear relation holds on \(\mathrm{H}^{2,1}(X)\).
\end{proposition}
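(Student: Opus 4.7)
I would realize Hashimoto--Sano's threefolds as the general fibers of a one-parameter degeneration $f\colon \mathcal{X}\to\Delta$ and then feed this family into Theorem~\ref{thm:main-2} (=\,Proposition~\ref{prop:finite-distance-polarized}) and Proposition~\ref{cor:finite-distance-ddbar}. By construction, the Hashimoto--Sano examples arise (in the spirit of Clemens--Friedman) as smoothings of a singular threefold produced by contracting a configuration of rational curves in a projective CY satisfying enough numerical relations to guarantee unobstructed smoothing. After semistable reduction and the natural blow-up of the exceptional strata, one obtains a semi-stable degeneration $f$ whose central fiber $E=\bigcup E_i$ has projective components $E_i$ and whose general fiber $X$ is the Hashimoto--Sano threefold. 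The whole argument is then an application of the general machinery; the content is in verifying the hypotheses.

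The first step is to establish Hypothesis~\ref{assumption:com-kahler}. Since the starting data is projective and the blow-ups performed in the semistable reduction are along projective centers, I expect a polarization on a projective compactification of $\mathcal X$ to restrict fiberwise to classes in $H^2(E;\mathbb R)$ that are Kähler on each $E_i$; verifying this is a cohomological book-keeping exercise on the explicit stratification of $E$ provided by Hashimoto--Sano. The second step is to check that $X$ is a \emph{strict} CY $\partial\bar\partial$-threefold: triviality of $K_X$ is built into the construction, while $H^i(X;\mathcal O_X)=0$ and $H^0(X;\Omega^j_X)=0$ for $1\le i,j\le 2$ follow by a Mayer--Vietoris / Clemens--Schmid argument from the analogous vanishings on the components $E_i$ and their intersections, which are themselves projective CY-type varieties whose Hodge numbers Hashimoto--Sano control.

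The crux is the third step: proving $N\mathcal F^{3}_{\mathrm{lim}}=0$ and the isomorphism in Hypothesis~\ref{assumption:isomorphism}. The vanishing cycles of $f$ come from the contracted rational curves (equivalently the nodal strata of the singular model), so $N$ on $H^{3}(X;\mathbb C)$ is governed by the Picard--Lefschetz formula together with Schmid's nilpotent orbit theorem. Pairing the $(3,0)$-class against each vanishing sphere yields a period integral that, thanks to the numerical relations among the $[C_i]$ that are designed into the Hashimoto--Sano construction to enable smoothing, must vanish; this forces $N\mathcal F^3_{\mathrm{lim}}=0$. Hypothesis~\ref{assumption:isomorphism} becomes a purely combinatorial statement on the Steenbrink weight spectral sequence for the stratification of $E$, so can be read off from the dual complex of $E$.

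The main obstacle I anticipate is precisely the third step: identifying the vanishing cycles explicitly enough in the Hashimoto--Sano geometry to compute $N$ on $\mathcal F^{3}_{\mathrm{lim}}$ and to check the strict surjectivity/injectivity of $N^{k}$ on the graded pieces. Once these inputs are in hand, Theorem~\ref{thm:main-2} yields the second Hodge--Riemann bilinear relation on $H^{2,1}(X)$, and then Proposition~\ref{cor:finite-distance-ddbar} (whose finite-distance hypothesis is furnished by Theorem~\ref{thm:finite-distance} applied to the same family, using again $N\mathcal F^{3}_{\mathrm{lim}}=0$) delivers the $\partial\bar\partial$-lemma on $X$, completing the proof.
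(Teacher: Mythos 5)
There is a genuine gap, and it occurs at the very foundation of your strategy. The Hashimoto--Sano threefolds are \emph{not} Clemens--Friedman type smoothings obtained by contracting $(-1,-1)$-curves subject to numerical relations; they are smoothings of a simple normal crossing variety $X_{0}=X_{1}\cup_{S}X_{2}$ obtained by gluing two rational threefolds (a blow-up of $\mathbf{P}^{1}\times\mathbf{P}^{1}\times\mathbf{P}^{1}$ and $\mathbf{P}^{1}\times\mathbf{P}^{1}\times\mathbf{P}^{1}$ itself) along a $K3$ surface $S$ via a twisted automorphism $\iota_{a}$. Because both components are rational, one has $I^{3,0}=0$ and $\dim I^{3,1}=1$ in Deligne's splitting of the limiting mixed Hodge structure, so $\mathcal{F}^{3}_{\mathrm{lim}}=I^{3,1}$ is spanned by a class $\sigma_{1}$ with $N(\sigma_{1})=\xi_{1}\neq 0$ in $I^{2,0}$. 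Hence $N\mathcal{F}^{3}_{\mathrm{lim}}\neq 0$ (equivalently $d=1>0$), the degeneration is at \emph{infinite} distance by Theorem \ref{thm_infinite-distance}, and the crux of your third step --- forcing $N\mathcal{F}^{3}_{\mathrm{lim}}=0$ from Picard--Lefschetz considerations --- is simply false for this family. Consequently neither Theorem \ref{thm:main-2} (= Proposition \ref{prop:finite-distance-polarized}) nor Corollary \ref{cor:finite-distance-ddbar} applies: the latter explicitly requires the central fiber to be at finite distance, and that hypothesis fails here.

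What the paper actually does is rerun the frame-construction argument of Section 3 for the \emph{different} limiting Hodge diamond of these examples: it builds a real basis adapted to the nonzero pieces $I^{3,1}$, $I^{2,0}$, $I^{0,2}$, $I^{1,3}$ in addition to $I^{2,1}\oplus I^{1,2}$ and $I^{2,2}\oplus I^{1,1}$, untwists by $e^{zN}$ to get a multi-valued frame for $\mathcal{F}^{2}_{t}$, and verifies directly that the top wedge product of the frame with its conjugate is nonzero for $\operatorname{Im}(z)\gg 0$, giving $\mathcal{F}^{2}_{t}\cap\overline{\mathcal{F}^{2}_{t}}=(0)$ and hence the $\partial\bar{\partial}$-lemma via Friedman's criterion. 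The key geometric input replacing your Picard--Lefschetz analysis is Lemma \ref{lem:image-picard} together with the explicit action of $\iota_{a}$ on $\operatorname{Pic}(S)$, which is used to verify Hypothesis \ref{assumption:isomorphism} by showing that the composite $\varphi^{\vee}\circ\varphi$ on $\mathrm{H}^{2}(X_{2};\mathbb{C})$ is cup product with $2h_{1}+2h_{2}+2h_{3}$, an isomorphism. The polarization statement is then obtained by reusing the positivity argument of Theorem \ref{theorem:polarized-hs} with the pairing formulas \eqref{eq:pairing-gr33} and \eqref{eq:pairing-gr24}, which hold without any assumption on the shape of the limiting Hodge diamond. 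Your proposal would need to be rebuilt from this starting point; as written, the route through the finite-distance machinery cannot be repaired.
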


\begin{remark}
Some results presented in this manuscript were also obtained by K.-W.~Chen independently
\cite{2024-Chen}, where the Hodge structure on the smoothings is carefully studied.
\end{remark}

\subsection*{Acknowledgment}
The author would like to thank Professor Chin-Lung Wang for his interest as well as
many valuable conversations with the author on this project. 
He would like to thank Professor Shing-Tung Yau for his interest and comments on 
the manuscript. The author 
also wants to thank 
Chung-Ming Pan for inspiring discussions. Part of the results presented here
was announced in the workshop ``East Asian Symplectic Conference 2023''
in October, 2023. In a communication with C.-L.~Wang, the author was informed that K.-W.~Chen
has also obtained similar results in this direction independently. The author would also like
to thank him for suggestions to improve the presentation.

\section{Preliminaries}
\subsection*{Notation}
Let \(X\) be a compact complex manifold. 
%We denote by \(\mathscr{A}_{X}^{p,q}\)
%the sheaf of smooth \((p,q)\)-forms on \(X\) and by 
%\(\mathscr{A}_{X}^{k}\) the sheaf of smooth \(k\)-forms on \(X\). 
%The notation \(\Omega_{X}^{k}\) denotes the sheaf of holomorphic \(k\)-forms
%n \(X\).
%We shall drop the subscript \(X\) if no confusion occurs.
%The global section of \(\mathscr{A}^{p,q}_{X}\) and \(\mathscr{A}^{k}_{X}\)
%are denoted by \(A^{p,q}(X)\) and \(A^{k}(X)\) respectively.
For a sheaf \(\mathscr{F}\) on \(X\), the
notation \(\mathrm{H}^{k}(X;\mathscr{F})\) stands for 
the usual sheaf cohomology on \(X\).
For a bounded complex of \(\mathbb{C}\)-sheaves \(\mathscr{F}^{\bullet}\) on \(X\),
we denote by \(\mathbf{H}^{k}(X;\mathscr{F}^{\bullet})\)
the \(k\)\textsuperscript{th} hypercohomology group.

\subsection{The \texorpdfstring{\(\partial\bar{\partial}\)}{}-lemma}
\label{subsection:ddbar-lemma}
We begin with the statement of \(\partial\bar{\partial}\)-lemma.
Let \(X\) be a compact complex manifold and 
\(\Omega_{X}^{p}\) be the sheaf of holomorphic \(p\)-forms on \(X\).
There is a complex of sheaves \(\Omega_{X}^{\bullet}\)
\begin{equation}
\label{equation:resolution-holomorphic}
\begin{tikzcd}[column sep=1em]
&0\ar[r]&\Omega_{X}^{0}\ar[r,"\partial"]&\Omega_{X}^{1}\ar[r,"\partial"]&\cdots
\ar[r,"\partial"]&\Omega_{X}^{n}\ar[r]&0,~n=\dim X,
\end{tikzcd}
\end{equation}
which is quasi-isomorphic to the constant sheaf \(\mathbb{C}_{X}\) on \(X\). 
The hypercohomology of \(\Omega_{X}^{\bullet}\) computes the 
usual cohomology (singular cohomology) of \(X\). 
%The Dolbeault resolution
%\(\Omega_{X}^{p}\to\mathscr{A}^{p,\bullet}\)
%allows us to compute the hypercohomology 
%via the double complex \((\mathscr{A}^{p,q},\mathrm{d}_{1}=
%\partial,\mathrm{d}_{2}=\bar{\partial})\).
%The total complex of \((\mathscr{A}^{\bullet,\bullet},
%\mathrm{d}_{1},\mathrm{d}_{2})\)
%is the usual de~Rham resolution 
%\((\mathscr{A}^{\bullet},\mathrm{d}=\mathrm{d}_{1}+\mathrm{d}_{2})\)
%on \(X\).

The truncated complexes 
\begin{equation*}
\mathrm{F}^{p}\Omega_{X}^{\bullet}:= [0\to\cdots \to 0\to \Omega_{X}^{p}
\to\cdots\to\Omega_{X}^{n}\to 0]
\end{equation*}
with \(\Omega_{X}^{n}\) at the \(n\)\textsuperscript{th} place
defines a decreasing filtration 
on \eqref{equation:resolution-holomorphic} 
on the level of complexes and
they induce a decreasing filtration \(\mathrm{F}^{\bullet}\)
on the cohomology group \(\mathrm{H}^{k}(X;\mathbb{C})\);
specifically \(\mathrm{F}^{p}\mathrm{H}^{k}(X;\mathbb{C})\)
is defined to be the image of the canonical map
\begin{equation*}
\mathbf{H}^{k}(X;\mathrm{F}^{p}\Omega_{X}^{\bullet})\to
\mathbf{H}^{k}(X;\Omega_{X}^{\bullet})=\mathrm{H}^{k}(X;\mathbb{C})
\end{equation*}
induced from \(\mathrm{F}^{p}\Omega_{X}^{\bullet}\to \Omega_{X}^{\bullet}\).
\begin{definition}
We say that the
\emph{\(\partial\bar{\partial}\)-lemma holds on \(X\)}
or call \(X\) \emph{a \(\partial\bar{\partial}\)-manifold}
if for all \(p,q\)
and for all \(\mathrm{d}\)-closed \((p,q)\)-form \(\eta\in A^{p,q}(X)\),
the following statement holds:
\begin{equation} 
\tag{\(\ast\)}
\mbox{\(\eta\) is \(\mathrm{d}\)-exact, 
\(\partial\)-exact, or \(\bar{\partial}\)-exact 
\(\Leftrightarrow\)
\(\eta = \partial\bar{\partial}\xi\) for some \(\xi\in A^{p-1,q-1}(X)\)}.
\end{equation}
\end{definition}

\begin{remark}
It is known that the \(\partial\bar{\partial}\)-lemma
is equivalent to the following statement.
Let \(\eta\in A^{k}(X)\) be a both \(\partial\)-closed
and \(\bar{\partial}\)-closed \(k\)-form. Then 
\begin{equation*} 
\mbox{\(\eta\) is \(\mathrm{d}\)-exact
\(\Leftrightarrow\)
\(\eta = \partial\bar{\partial}\xi\) for some \(\xi\in A^{k-2}(X)\)}.
\end{equation*}
\end{remark}

It is well-known (\cite{1977-Deligne-Griffiths-Morgan-Sullivan-real-homotopy-theory-of-kahler-manifolds}*{Equation (5.21)}
and \cite{1971-Deligne-theorie-de-Hodge-II}*{Equation (4.3.1)})
that the following two statements 
are equivalent.
\begin{itemize}
\item[\textbf{(i)}] 
\begin{enumerate}
\item[(a)] The Hodge-to-de~Rham spectral sequence 
(a.k.a.~the Fr\"{o}licher spectral sequence)
on \(X\) degenerates at \(\mathrm{E}_{1}\);
\item[(b)] for all \(k\), if \(\mathrm{F}^{\bullet}\) denotes 
the corresponding filtration on \(\mathrm{H}^{k}(X;\mathbb{C})\),
then \(\mathrm{F}^{\bullet}\) and \(\bar{\mathrm{F}}^{\bullet}\) 
are \(k\)-opposed; namely
\(\mathrm{F}^{p}\oplus\bar{\mathrm{F}}^{k+1-p}=\mathrm{H}^{k}(X;\mathbb{C})\).
\end{enumerate} 

\item[\textbf{(ii)}] The \(\partial\bar{\partial}\)-lemma holds on \(X\).  
\end{itemize}

For a compact complex manifold, consider the short exact sequence
\begin{eqnarray}
0\to \mathrm{F}^{p+1}\Omega_{X}^{\bullet}\to
\mathrm{F}^{p}\Omega_{X}^{\bullet}\to \mathrm{F}^{p}\Omega_{X}^{\bullet}
\slash\mathrm{F}^{p+1}\Omega_{X}^{\bullet}\cong\Omega_{X}^{p}[-p]\to 0.
\end{eqnarray}
Taking hypercohomology, we obtain a long exact sequence
\begin{eqnarray*}
\cdots\to \mathbf{H}^{k}(X;\mathrm{F}^{p+1}\Omega_{X}^{\bullet})\to 
\mathbf{H}^{k}(X;\mathrm{F}^{p}\Omega_{X}^{\bullet})\to
\mathrm{H}^{k-p}(X;\Omega_{X}^{p})\to
\mathbf{H}^{k+1}(X;\mathrm{F}^{p+1}\Omega_{X}^{\bullet})\to\cdots.
\end{eqnarray*}
If \(X\) is a \(\partial\bar{\partial}\)-manifold, there exists a canonical injection
\begin{eqnarray*}
\mathrm{H}^{p,q}(X):=\mathrm{H}^{q}(X;\Omega_{X}^{p})\to \mathrm{H}^{p+q}(X;\mathbb{C})
\end{eqnarray*}
from the Dolbeault cohomology to the de Rham cohomology.
For a Dolbeault class \([\theta]\in\mathrm{H}^{p,q}(X)\), we choose a \(\bar{\partial}\)-closed
representative \(\theta\in A^{p,q}(X)\).
Now \(\partial\theta\in A^{p+1,q}(X)\) is \(\mathrm{d}\)-closed and clearly \(\partial\)-exact.
By assumption, there exists \(v\in A^{p,q-1}(X)\) such that 
\(\partial\theta=\partial\bar{\partial}v\). So \(\theta-\bar{\partial}v\)
is a \(\mathrm{d}\)-closed representative of \([\theta]\).

To prove the injectivity, let \([\theta]\in\mathrm{H}^{p,q}(X)\)
such that \(\theta-\bar{\partial}v\) is also \(\mathrm{d}\)-exact.
By assumption again, there exists \(u\in A^{p-1,q-1}(X)\) such that
\begin{eqnarray*}
\theta-\bar{\partial}v = \partial\bar{\partial}u.
\end{eqnarray*}
Thus \(\theta = \bar{\partial}(v+\partial u)\) is \(\bar{\partial}\)-exact
and hence \([\theta]=0\in \mathrm{H}^{p,q}(X)\).
It is also clear that 
\begin{eqnarray*}
\mathrm{H}^{p,q}(X) = \overline{\mathrm{H}^{q,p}(X)}
\end{eqnarray*}
when we regard them as subspaces in \(\mathrm{H}^{p+q}(X;\mathbb{C})\). In particular, 
\(\mathrm{H}^{k}(X;\mathbb{C})\) carries a weight \(k\) Hodge structure.

%Now we specialize our discussion to the middle cohomology 
%group \(\mathrm{H}^{n}(X;\mathbb{C})\).

%\begin{remark}
%Friedman proved that
%the condition \textbf{(i)} is fulfilled for general Clemens manifolds
%\cite{2019-Friedman-the-ddbar-lemma-for-general-clemens-manifolds}
%and Li \cite{2022-Li-polarized-hodge-structures-for-clemens-manifolds}
%subsequently generalized Friedman's result to all Clemens manifolds.
%\end{remark}

\subsection{Calabi--Yau \(\partial\bar{\partial}\)-manifolds}
\begin{definition}
Let \(X\) be a compact complex manifold of dimension \(n\). We
say that \(X\) is a \emph{Calabi--Yau (CY) \(\partial\bar{\partial}\)-manifold}
if \(X\) is a \(\partial\bar{\partial}\)-manifold whose 
canonical bundle \(\Omega^{n}_{X}\) is trivial.
%A \emph{FLY manifold} is a compact CY \(\partial\bar{\partial}\)-manifold 
%equipped with a balanced metric, i.e.~a hermitian metric
%whose metric two form \(\omega\) satisfies \(\mathrm{d}\omega^{n-1}=0\).
\end{definition}
Many theorems stated for CY manifolds (compact K\"{a}hler manifolds with
trivial canonical bundle) also hold for CY \(\partial\bar{\partial}\)-manifold.
For instance, we have the unobstructedness theorem, 
the Bogomolov--Tian--Todorov theorem for CY \(\partial\bar{\partial}\)-manifolds.
\begin{theorem}[\cite{2019-Popovici-holomorphic-deformations-of-balanced-calabi-yau-d-dbar-manifolds}*{Theorem 1.2}]
Let \(X\) be a compact CY \(\partial\bar{\partial}\)-manifold.
Then \(X\) has unobstructed deformation, i.e.~the Kuranishi space of \(X\) is smooth.
\end{theorem}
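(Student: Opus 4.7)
The plan is to follow the Tian--Todorov strategy, observing that only the \(\partial\bar{\partial}\)-lemma, not a full K\"{a}hler assumption, is needed to make it work. Smoothness of the Kuranishi space is equivalent to the solvability of every formal deformation of the complex structure, which in the Maurer--Cartan formalism amounts to finding
\[
\phi(t) = \sum_{k\geq 1}\phi_{k}\,t^{k}\in A^{0,1}(X,T_{X})[[t]],\qquad \bar{\partial}\phi(t) = \tfrac{1}{2}[\phi(t),\phi(t)],
\]
starting from any prescribed \(\phi_{1}\in \mathrm{H}^{1}(X,T_{X})\). Solving recursively, at stage \(k\) one must produce \(\phi_{k}\in A^{0,1}(X,T_{X})\) with \(\bar{\partial}\phi_{k}=\tfrac{1}{2}\sum_{i+j=k}[\phi_{i},\phi_{j}]\), so the obstruction is to show the right-hand side is \(\bar{\partial}\)-exact.

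First I would use the nowhere vanishing holomorphic volume form \(\Omega\in\mathrm{H}^{0}(X,\Omega_{X}^{n})\), which exists because \(\Omega_{X}^{n}\) is trivial, to identify \(T_{X}\cong \Omega_{X}^{n-1}\) via interior contraction. This yields an isomorphism of Dolbeault complexes \((A^{0,*}(X,T_{X}),\bar{\partial})\cong (A^{n-1,*}(X),\bar{\partial})\), \(\phi\mapsto\phi\lrcorner\Omega\). Under this isomorphism the Schouten--Nijenhuis bracket transports to an operation expressible through \(\partial\); concretely, the Tian--Todorov identity asserts that for \(\bar{\partial}\)-closed \(\phi,\psi\in A^{0,1}(X,T_{X})\),
\[
[\phi,\psi]\lrcorner\Omega \;=\; -\partial\bigl((\phi\wedge\psi)\lrcorner\Omega\bigr) \pmod{\bar{\partial}\text{-exact forms}}.
\]
This is a purely algebraic computation on any complex manifold with \(\Omega_{X}^{n}\cong\mathscr{O}_{X}\); no metric or K\"{a}hler hypothesis enters.

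Next, at inductive step \(k\), the translated obstruction \(\alpha_{k}:=\tfrac{1}{2}\sum_{i+j=k}[\phi_{i},\phi_{j}]\lrcorner\Omega\in A^{n-1,2}(X)\) is \(\bar{\partial}\)-closed by the graded Jacobi identity applied to the previously constructed \(\phi_{1},\ldots,\phi_{k-1}\), and it is \(\partial\)-exact modulo \(\bar{\partial}\)-exact forms by the Tian--Todorov identity above. Invoking the \(\partial\bar{\partial}\)-lemma on \(X\), any form that is both \(\bar{\partial}\)-closed and \(\partial\)-exact is \(\partial\bar{\partial}\)-exact, hence \(\bar{\partial}\)-exact. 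Pulling the primitive back through \(\lrcorner\Omega\) gives a \(\bar{\partial}\)-primitive of the \(T_{X}\)-valued obstruction, and one pins it down canonically by choosing a Hermitian metric, its Green operator \(G\), and setting \(\phi_{k}:=\bar{\partial}^{*}G\bigl(\tfrac{1}{2}\sum[\phi_{i},\phi_{j}]\bigr)\). Convergence of the resulting power series in a suitable H\"{o}lder norm then follows from the classical elliptic majorant argument of Kodaira--Spencer--Kuranishi.

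The principal technical hurdle is verifying the Tian--Todorov identity outside the K\"{a}hler setting and bookkeeping the inductive hypotheses. The identity itself is algebraic and passes through without change, so the delicate point is ensuring that at each stage the accumulated obstruction \(\alpha_{k}\) is genuinely \(\bar{\partial}\)-closed in order to apply the \(\partial\bar{\partial}\)-lemma; this is achieved by imposing the gauge condition \(\bar{\partial}^{*}\phi_{k}=0\) and induction on \(k\), exactly as in the K\"{a}hler case, with all applications of Hodge identities replaced by invocations of the \(\partial\bar{\partial}\)-lemma.
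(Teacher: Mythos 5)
The paper does not reprove this statement; it is quoted from Popovici, whose argument is exactly the Tian--Todorov recursion you outline. Your skeleton is therefore the right one, but the inductive step as you have written it would fail. The Tian--Todorov identity is not valid ``for \(\bar{\partial}\)-closed \(\phi,\psi\) modulo \(\bar{\partial}\)-exact forms'': the algebraic identity reads \([\phi,\psi]\lrcorner\Omega=\pm\partial\bigl((\phi\wedge\psi)\lrcorner\Omega\bigr)\) up to correction terms involving \(\partial(\phi\lrcorner\Omega)\) and \(\partial(\psi\lrcorner\Omega)\), so to conclude that the obstruction \(\alpha_{k}=\tfrac12\sum_{i+j=k}[\phi_{i},\phi_{j}]\lrcorner\Omega\) is \(\partial\)-exact you need the inductive hypothesis that every \(\phi_{i}\lrcorner\Omega\) constructed so far is \(\partial\)-closed, not that \(\phi_{i}\) is \(\bar{\partial}\)-closed. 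Your recipe \(\phi_{k}:=\bar{\partial}^{*}G(\cdots)\) does produce a \(\bar{\partial}\)-primitive once \(\alpha_{k}\) is known to be \(\bar{\partial}\)-exact, but on a non-K\"{a}hler manifold there are no K\"{a}hler identities converting the gauge condition \(\bar{\partial}^{*}\phi_{k}=0\) into \(\partial(\phi_{k}\lrcorner\Omega)=0\); the hypothesis needed to run Tian--Todorov at stage \(k+1\) is therefore not propagated and the induction breaks precisely at the point where the non-K\"{a}hler difficulty lives.

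The repair is to use the \(\partial\bar{\partial}\)-lemma not merely to detect exactness but to \emph{choose} the primitive: \(\alpha_{k}\) is \(\bar{\partial}\)-closed (graded Jacobi identity) and \(\partial\)-exact (Tian--Todorov plus induction), hence \(\alpha_{k}=\partial\bar{\partial}\xi_{k}\) for some \(\xi_{k}\), and one defines \(\phi_{k}\) through the isomorphism \(\lrcorner\,\Omega\colon A^{0,1}(X,T_{X})\cong A^{n-1,1}(X)\) by \(\phi_{k}\lrcorner\Omega:=-\partial\xi_{k}\). This single choice gives both \(\bar{\partial}(\phi_{k}\lrcorner\Omega)=\alpha_{k}\) and \(\partial(\phi_{k}\lrcorner\Omega)=0\), which is exactly the statement to be carried forward. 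The same issue affects your seed term: \(\phi_{1}\) must be chosen so that \(\phi_{1}\lrcorner\Omega\) is \(\mathrm{d}\)-closed, which is possible because of the \(\partial\bar{\partial}\)-lemma (this is the existence of \(\mathrm{d}\)-closed Dolbeault representatives recalled in \S\ref{subsection:ddbar-lemma}), not because \(\phi_{1}\) is a harmonic representative for some Hermitian metric. With these corrections your argument becomes the standard one, and the convergence step via the Kuranishi majorant method goes through as you say.
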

We also have the following local Torelli theorem.
\begin{theorem}[\cite{2019-Popovici-holomorphic-deformations-of-balanced-calabi-yau-d-dbar-manifolds}*{Theorem 5.4}]
Let \(X\) be a compact CY \(\partial\bar{\partial}\)-manifold of
complex dimension \(n\) and let \(\pi\colon\mathcal{X}\to S\) be its 
Kuranishi family. Then the associated period map
\begin{eqnarray*}
\mathcal{P}\colon S\to D\subset\mathbf{P}\mathrm{H}^{n}(X;\mathbb{C}),~
s\mapsto \mathrm{H}^{n,0}(\mathcal{X}_{s};\mathbb{C})
\end{eqnarray*}
is a local holomorphic immersion.
\end{theorem}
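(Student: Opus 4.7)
The plan is to adapt the classical Griffiths infinitesimal period relation to the \(\partial\bar{\partial}\)-setting, using the Hodge decomposition on \(\mathrm{H}^{n}(X;\mathbb{C})\) available from Subsection \ref{subsection:ddbar-lemma}. It suffices to check that \(\mathrm{d}\mathcal{P}_{0}\) is injective; the conclusion at a nearby point \(s\) follows from the same argument, since the \(\partial\bar{\partial}\)-property is open in families, so every fiber \(\mathcal{X}_{s}\) is again a CY \(\partial\bar{\partial}\)-manifold. By the Bogomolov--Tian--Todorov theorem recalled above, \(S\) is smooth at \(0\), and the Kodaira--Spencer map identifies \(\mathrm{T}_{0}S\) with \(\mathrm{H}^{1}(X;T_{X})\). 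Since \(\Omega_{X}^{n}\) is trivial, fix a nowhere-vanishing holomorphic \(n\)-form \(\Omega\) spanning the line \(L:=\mathrm{H}^{n,0}(X)\). The tangent space of \(\mathbf{P}\mathrm{H}^{n}(X;\mathbb{C})\) at \(L\) is \(\mathrm{Hom}(L,\mathrm{H}^{n}(X;\mathbb{C})/L)\), and Griffiths transversality forces the image of \(\mathrm{d}\mathcal{P}_{0}\) into \(\mathrm{Hom}(L,\mathrm{F}^{n-1}/\mathrm{F}^{n})\), which the \(\partial\bar{\partial}\)-lemma identifies with \(\mathrm{Hom}(L,\mathrm{H}^{n-1,1}(X))\).

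Next, I would compute \(\mathrm{d}\mathcal{P}_{0}\) using the Gauss--Manin connection on the holomorphic bundle over \(S\) whose fibers are \(\mathrm{H}^{n}(\mathcal{X}_{s};\mathbb{C})\). Griffiths' formula expresses \(\mathrm{d}\mathcal{P}_{0}(\theta)\) as the map \(\Omega\mapsto[\iota_{\theta}\Omega]\bmod L\), where contraction of the Dolbeault class \(\theta\in\mathrm{H}^{1}(X;T_{X})\) against the holomorphic \(n\)-form \(\Omega\) produces a class in \(\mathrm{H}^{1}(X;\Omega_{X}^{n-1})=\mathrm{H}^{n-1,1}(X)\). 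Because \(\Omega\) is nowhere vanishing, the contraction \(\xi\mapsto\iota_{\xi}\Omega\) defines a sheaf isomorphism \(T_{X}\xrightarrow{\sim}\Omega_{X}^{n-1}\), which on first cohomology induces the isomorphism
\begin{equation*}
\mathrm{H}^{1}(X;T_{X})\xrightarrow{\sim}\mathrm{H}^{1}(X;\Omega_{X}^{n-1})=\mathrm{H}^{n-1,1}(X).
\end{equation*}
Combined with the one-dimensionality of \(L\), this makes the composite \(\mathrm{T}_{0}S\to\mathrm{Hom}(L,\mathrm{H}^{n-1,1}(X))\) injective, and hence \(\mathcal{P}\) is a local holomorphic immersion.

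The main technical point to verify is that Griffiths' formula for \(\mathrm{d}\mathcal{P}_{0}\) transports to the \(\partial\bar{\partial}\)-setting, since the classical derivations sometimes invoke K\"{a}hler identities. In fact only three structural ingredients are used: a flat Gauss--Manin connection on the relative de Rham bundle (purely topological); holomorphicity of the Hodge filtration in the family (implicit in the well-definedness of \(\mathcal{P}\)); and the identification \(\mathrm{F}^{n-1}/\mathrm{F}^{n}\cong\mathrm{H}^{n-1,1}\), which is exactly the \(\partial\bar{\partial}\)-lemma recalled above. Granted these inputs, the contraction computation proceeds identically to the K\"{a}hler CY case, with no metric structure required.
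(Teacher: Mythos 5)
This theorem is quoted in the paper from Popovici (\cite{2019-Popovici-holomorphic-deformations-of-balanced-calabi-yau-d-dbar-manifolds}*{Theorem 5.4}) without proof, so there is no in-paper argument to compare against. Your proposal is the standard local Torelli computation and is essentially the same route Popovici takes: unobstructedness gives smoothness of \(S\), Griffiths' formula reduces \(\mathrm{d}\mathcal{P}_{0}\) to the contraction \(\theta\mapsto[\iota_{\theta}\Omega]\), and the sheaf isomorphism \(T_{X}\cong\Omega_{X}^{n-1}\) together with the identification \(\mathrm{F}^{n-1}/\mathrm{F}^{n}\cong\mathrm{H}^{n-1,1}(X)\) (from \(\mathrm{E}_{1}\)-degeneration) gives injectivity. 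You correctly isolate the only non-classical inputs — deformation-openness of the \(\partial\bar{\partial}\)-property and local freeness of the Hodge bundles so that \(\mathcal{P}\) is holomorphic and transversality applies — so I see no gap.
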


Let us recall a definition for the later use.
\begin{definition}
Let \((X,\omega)\) be a balanced CY \(\partial\bar{\partial}\)-manifold.
Given a Dolbeault cohomology class \([\theta]\in\mathrm{H}^{p,q}(X)\), 
let \(\theta\in A^{p,q}(X)\)be a \(\Delta_{\omega}''\)-representative
%Now \(\partial\theta\in A^{p+1,q}(X)\) is \(\mathrm{d}\)-closed and clearly \(\partial\)-exact.
and \(v_{\mathrm{min}}\in A^{p,q-1}(X)\) such that \(v_{\mathrm{min}}\)
is the solution to
\(\partial\theta=\partial\bar{\partial}v\) with minimum \(L^{2}\) norm
with respect to \(\omega\). The \(\mathrm{d}\)-closed form
\(\theta_{\mathrm{min}}:=\theta+\bar{\partial}v_{\mathrm{min}}\)
is called the \emph{\(\omega\)-minimal \(\mathrm{d}\)-closed
representative of \([\theta]\)}.
\end{definition}

%\begin{remark}
%\end{remark}
%The local Torelli theorem provides us with a metric on the moduli space \(S\).

\subsection{Deligne's canonical extension}
\label{subsec:deligne-ext}
Let \(\mathcal{V}\to \Delta^{\ast}\) be a holomorphic vector bundle with a flat connection \(\nabla\) whose monodromy \(T\) is unipotent. 
Denote by 
\begin{eqnarray*}
N:=\log T=\log (I-(I-T))=-\sum_{k=1}^{\infty}\frac{(I-T)^{k}}{k}.
\end{eqnarray*} 
the associated nilpotent operator so that \(T=e^{N}\).
Denote by 
\begin{eqnarray*}
\pi\colon \mathfrak{h}\to\Delta^{\ast},~z\mapsto t=e^{2\pi\sqrt{-1}z}, 
\end{eqnarray*}
the universal cover.
Then the pullback \(\pi^{\ast}\mathcal{V}\) becomes a trivial vector bundle.
Let \(\{v_{1},\ldots,v_{m}\}\) be a flat trivialization for \(\pi^{\ast}\mathcal{V}\).
For a flat section \(v\) of \(\mathcal{\pi^{\ast}V}\), we see that it satisfies the relation
\(v(z+1) = Tv(z)\). Therefore
the twisted section
\begin{equation*}
u:=e^{-zN} v
\end{equation*}
is invariant under the translation \(z\mapsto z+1\), for
\begin{eqnarray*}
u(z+1) = e^{-(z+1)N} v(z+1) = e^{-zN} e^{-N} Tv(z) = e^{-zN}v(z) = u(z).
\end{eqnarray*}
It follows that \(u\) becomes a (non-flat) section of \(\mathcal{V}\) on \(\Delta^{\ast}\).
In this manner, we obtain a twisted section \(\{u_{1},\ldots,u_{m}\}\)
for \(j_{\ast}\mathcal{V}\) where \(j\colon\Delta^{\ast}\to\Delta\)
is the open inclusion.
We can thus extend \(\mathcal{V}\)
to a holomorphic vector bundle 
\(\bar{\mathcal{V}}\) on \(\Delta\) via the frame \(\{u_{1},\ldots,u_{m}\}\). 
%For a section \(\tau\colon \mathbb{C}\slash \mathbb{Z}\to\mathbb{C}\)
%of the canonical projection, we can define
%\begin{eqnarray*}
%\log_{\tau}(t):=2\pi\sqrt{-1}\tau\left(\frac{\log t}{2\pi \sqrt{-1}}\right).
%\end{eqnarray*}
%We see that a choice of \(\tau\) is equivalent to 
%choosing a branch for \(\log t\) and \(t = e^{\log_{\tau}(t)}\).
%In this branch, we have
%\begin{eqnarray*}
%z = \frac{\log_{\tau} (t)}{2\pi\sqrt{-1}}.
%\end{eqnarray*}
%Now we assume that \(\operatorname{Re}\tau\in [0,1)\).
%If \(\tau\) is differentiable, one then subsequently computes
We compute 
\begin{eqnarray*}
\nabla_{t\partial_{t}} u_{k} = \frac{1}{2\pi\sqrt{-1}}
\nabla_{\partial_{z}} u_{k} = \frac{-N u_{k}}{2\pi\sqrt{-1}}.
\end{eqnarray*}
We obtain a logarithmic connection \(\bar{\nabla}\) on \(\bar{\mathcal{V}}\)
with residue \(-N\slash 2\pi\sqrt{-1}\) along \(t=0\), i.e.
\begin{eqnarray*}
\nabla = \mathrm{d} - \frac{N}{2\pi\sqrt{-1}}\otimes\frac{\mathrm{d}t}{t}.
\end{eqnarray*}
In which case, the connection \(\nabla\)
is extended to be a connection \(\bar{\nabla}\)
on \(\bar{\mathcal{V}}\)
with a logarithmic pole along \(t=0\).
The extension \((\bar{\mathcal{V}},\bar{\nabla})\)
is called \emph{Deligne's canonical extension}.

\subsection{Mixed Hodge structures and Deligne's splitting}
Let \(H_{\mathbb{R}}\) be a finite-dimensional real vector space. 
Put \(H:=H_{\mathbb{R}}\otimes_{\mathbb{R}}\mathbb{C}\).
Recall that a (real) mixed Hodge structure (\(\mathbb{R}\)-mixed Hodge structure)
on \(H_{\mathbb{R}}\)
consists of a pair of finite filtrations
\begin{align*}
\cdots\subset W_{k-1}\subset W_{k}\subset W_{k+1}\subset\cdots\\
\cdots\supset F^{p-1}\supset F^{p}\supset F^{p+1}\supset\cdots
\end{align*}
such that
\begin{itemize}
\item[(i)] \(W\) is defined over \(\mathbb{R}\);
\item[(ii)] for \(l\in\mathbb{Z}\), the filtration \(F^{\bullet}\) induces
a Hodge structure of weight \(l\)
on the graded piece \(\mathrm{Gr}_{l}^{W}H\).
\end{itemize}
The increasing filtration \(W_{\bullet}\) is called the \emph{weight filtration}
whereas the decreasing filtration \(F^{\bullet}\) is referred as the \emph{Hodge filtration}.

Recall that for a given \(\mathbb{R}\)-mixed Hodge structure 
\((H,W_{\bullet},F^{\bullet})\),
\emph{a weak spliiting of \((H,W_{\bullet},F^{\bullet})\)} is a decomposition 
of the vector space
\begin{equation}
H = \bigoplus_{p,q} J^{p,q}
\end{equation}
such that
\begin{equation}
\label{eq:mixed-hodge-splitting}
W^{\mathbb{C}}_{k} = \bigoplus_{p+q\le k} J^{p,q}~\mbox{and}~F^{p}=\bigoplus_{r\ge p} J^{r,q}_{\mathbb{C}}.
\end{equation}
Notice that in \eqref{eq:mixed-hodge-splitting} the
notation \(W^{\mathbb{C}}_{k}\) stands for
\(W_{k}\otimes_{\mathbb{R}}\mathbb{C}\).

By a theorem of Deligne, any \(\mathbb{R}\)-mixed Hodge structure always admits 
such a splitting. This is called the \emph{Deligne's splitting}. In fact,
given a \(\mathbb{R}\)-mixed Hodge stuctrue \((H,W_{\bullet},F^{\bullet})\), Delinge's splitting
is defined by
\begin{equation}
I^{p,q}:= F^{p}\cap W^{\mathbb{C}}_{p+q}\cap \left(\overline{F^{q}}\cap W^{\mathbb{C}}_{p+q}+
\sum_{j\ge 2}\overline{F^{q-j+1}}\cap W^{\mathbb{C}}_{p+q-j}\right).
\end{equation}
One checks that they satisfy \eqref{eq:mixed-hodge-splitting}
and also the property 
\begin{equation}
\label{eq:deligne-splitting}
I^{p,q} = \overline{I^{q,p}} \mod{\bigoplus_{r<p,~s<q} I^{r,s}}.
\end{equation}
For a proof, one can consult \cite{1986-Cattani-Kaplan-Schmid-degeneration-of-hodge-structures}*{(2.13) Theorem}.

\subsection{Steenbrink's limiting mixed Hodge structure}
\label{subsec:st-limiting-mhs}
We begin with the following setup.
\begin{itemize}
    \item Let \(f\colon \mathcal{X}\to \Delta\) be a semi-stable model, 
    i.e.~\(\mathcal{X}\) is a complex manifold, \(f\) is smooth over the punctured disk \(\Delta^{\ast}\),
    and the central fiber \(f^{-1}(0)\) is a simple normal crossing divisor.
    \item Let \(E_{1},\ldots,E_{m}\) be the irreducible components of \(E:=f^{-1}(0)\).
    \item For any subset \(I\subset \{1,\ldots,m\}\), we put
    \begin{equation*}
        E_{I}:=\bigcap_{i\in I} E_{i}~\mbox{and}~E(k):=\coprod_{|I|=k} E_{I}.
    \end{equation*}
    We have proper maps \(\iota_{k}\colon E(k)\to E\) for \(k\ge 1\).
    We shall also set \(E(0) = E\).
    \item For each \(I=\{i_{1}<\ldots<i_{r}\}\) and \(1\le j\le r\), 
    put \(I_{j}:=I\setminus \{i_{j}\}\); the subset of \(I\) with
    \(j\)\textsuperscript{th}
    element being omitted.
    Denote by \(\iota_{I,I_{j}}\colon E_{I}\to E_{I_{j}}\) the inclusion map.
    We have the restriction map \(\iota_{I,I_{j}}^{\ast}\colon \mathrm{H}^{k}(E_{I_{j}};\mathbb{C})
    \to\mathrm{H}^{k}(E_{I};\mathbb{C})\) and the
    Gysin map \((\iota_{I,I_{j}})_{!}\colon \mathrm{H}^{k}(E_{I};\mathbb{Z})
    \to\mathrm{H}^{k+2}(E_{I_{j}};\mathbb{C})\).
    Let \(\iota_{r,j}:=\oplus_{|I|=r} \iota_{I,I_{j}}\) and put
    \begin{eqnarray*}
    \psi_{r}:=\bigoplus_{j=1}^{r} (-1)^{j-1}\iota_{r,j}^{\ast}~\mbox{and}~
    \phi_{r}:=-\bigoplus_{j=1}^{r} (-1)^{j-1}(\iota_{r,j})_{!}.
    \end{eqnarray*}
    They are refereed as an \emph{alternating restriction map} and
    an \emph{alternating Gysin map}.
    We shall skip the subscript \(r\) when no confussion occurs.
    \item Let \(i_{t}\colon \mathcal{X}_{t}\to\mathcal{X}\) 
    be the inclusion of a fiber \(\mathcal{X}_{t}\).
    The total space \(\mathcal{X}\) is homotopic to \(E\) by a deformation retraction
    \(r\colon \mathcal{X}\to E\).
    Denote by \(r_{t}=r\circ i_{t}\colon\mathcal{X}_{t}\to E\).
    The complex \((Rr_{t})_{\ast}i_{t}^{\ast}\mathbb{Z}_{\mathcal{X}}\) is called 
    \emph{the complex of nearby cocycles}.
\end{itemize}
The hypercohomology of the complex \((Rr_{t})_{\ast}i_{t}^{\ast}\mathbb{Z}_{\mathcal{X}}\)
computes the cohomology of the nearby fiber. In fact, we have
\begin{equation*}
    \mathbf{H}^{q}((Rr_{t})_{\ast}i_{t}^{\ast}\mathbb{Z}_{\mathcal{X}})
    = \mathbf{H}^{q}(i_{t}^{\ast}\mathbb{Z}_{\mathcal{X}}) 
    = \mathrm{H}^{q}(i_{t}^{\ast}\mathbb{Z}_{\mathcal{X}})
    = \mathrm{H}^{q}(\mathcal{X}_{t}).
\end{equation*}
Nevertheless, we shall recall a concrete recipe to calculate 
the cohomology of the nearby fiber.

Let \(\pi\colon\mathfrak{h}\to \Delta^{\ast}\) be the universal cover.
The fiber product \(\mathcal{X}_{\infty}:=\mathcal{X}\times_{\Delta^{\ast}}\mathfrak{h}\)
is called the \emph{canonical fiber}. Note that \(\mathcal{X}_{\infty}\)
is homotopic to any smooth fiber of \(f\).

Look at the commutative diagram
\begin{equation*}
    \begin{tikzcd}
        & \mathcal{X}_{\infty} \ar[r,"p"]\ar[d,"F"]\ar[rr,bend left=40,"k"] & \mathcal{X}^{\ast}\ar[r,"j"]\ar[d] &\mathcal{X}\ar[d,"f"] & E\ar[l,"i"]\ar[d]\\
        & \mathfrak{h}\ar[r,"\pi"]\ar[rr,bend right=40,"\kappa"'] &\Delta^{\ast}\ar[r] &\Delta & \{0\}\ar[l].
    \end{tikzcd}
\end{equation*}
We can form the complex \(i^{\ast}(Rk)_{\ast}k^{\ast}\mathbb{Z}_{\mathcal{X}}\)
which computes the cohomology of the nearby fiber.

\begin{definition}
    Let \(\mathscr{K}^{\bullet}\in D^{+}(\mathcal{X},A)\) be a bounded below complex
    of sheaves of \(A\)-modules on \(\mathcal{X}\). We define
    the \emph{nearby cocycle}, denoted by \(\Phi_{f}\mathscr{K}^{\bullet}\),
    to be the complex
    \begin{equation*}
        \Phi_{f}\mathscr{K}^{\bullet} := i^{\ast} (Rk)_{\ast} k^{\ast} \mathscr{K}^{\bullet}
        \in D^{+}(E,A).
    \end{equation*}
There is a canonical morphism
\begin{equation*}
    i^{\ast}\mathscr{K}^{\bullet}\to \Phi_{f}\mathscr{K}^{\bullet}.
\end{equation*}
Let \(\phi_{f}\mathscr{K}^{\bullet}\) be the mapping cone the 
the morphism above. We obtain a triangle
\begin{equation*}
    i^{\ast}\mathscr{K}^{\bullet}\to \Phi_{f}\mathscr{K}^{\bullet}\to \phi_{f}\mathscr{K}^{\bullet}
    \xrightarrow[]{+1}
\end{equation*}
and \(\phi_{f}\mathscr{K}^{\bullet}\) is 
the complex of \emph{vanishing cocycles}.
\end{definition}
To define a mixed Hodge structure on 
\(\Phi_{f}\mathbb{C}_{\mathcal{X}}\), we
consider the \emph{relative logarithmic de Rham complex}
\(\Omega^{\bullet}_{\mathcal{X}\slash \Delta}(\log E)\).
Explicitly, it is defined as the cokernel of the morphism
\begin{eqnarray*}
\begin{tikzcd}[column sep=4em]
&\Omega^{\bullet-1}_{\mathcal{X}}(\log E)\ar[r,"\wedge f^{\ast}(\mathrm{d}t\slash t)"]
&\Omega^{\bullet}_{\mathcal{X}}(\log E).
\end{tikzcd}
\end{eqnarray*}
Let \(\mathcal{I}\subset\mathscr{O}_{\mathcal{X}}\) 
be the ideal sheaf defining \(E\).
One can check that \(\mathcal{I}\Omega^{\bullet}_{\mathcal{X}\slash \Delta}(\log E)\)
is stable under the differential.
We thus obtain a complex 
\(\Omega^{\bullet}_{\mathcal{X}\slash \Delta}(\log E)\otimes\mathscr{O}_{E}\).
\begin{theorem}
There exists a quasi-isomorphism in \(D^{+}(\mathbb{C}_{E})\)
\begin{equation*}
    \Phi_{f}\mathbb{C}_{\mathcal{X}}\simeq
    \Omega^{\bullet}_{\mathcal{X}\slash \Delta}(\log E)\otimes\mathscr{O}_{E}.
\end{equation*}
\end{theorem}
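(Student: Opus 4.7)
The plan is to reduce the quasi-isomorphism to Deligne's log de Rham computation combined with an explicit algebraic model for the universal cover \(p\colon\mathcal{X}_{\infty}\to\mathcal{X}^{\ast}\). I would first invoke Deligne's quasi-isomorphism
\(\Omega^{\bullet}_{\mathcal{X}}(\log E)\simeq Rj_{\ast}\mathbb{C}_{\mathcal{X}^{\ast}}\)
in \(D^{+}(\mathbb{C}_{\mathcal{X}})\), which follows from the holomorphic Poincar\'{e} lemma for logarithmic forms (cf.~\cite{1971-Deligne-theorie-de-Hodge-II}). Since \(\Phi_{f}\mathbb{C}_{\mathcal{X}}=i^{\ast}Rj_{\ast}Rp_{\ast}\mathbb{C}_{\mathcal{X}_{\infty}}\), the real task is to encode \(Rp_{\ast}\mathbb{C}_{\mathcal{X}_{\infty}}\) at the level of log forms in a way that is compatible with the monodromy.

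Following Steenbrink, I would introduce the auxiliary complex
\(K^{\bullet}:=\Omega^{\bullet}_{\mathcal{X}}(\log E)[u]\), where \(u\) is a formal indeterminate representing \(\frac{1}{2\pi\sqrt{-1}}\log t\), equipped with the differential
\begin{equation*}
\mathrm{d}(\omega\otimes u^{k})=\mathrm{d}\omega\otimes u^{k}+k(\omega\wedge f^{\ast}\mathrm{d}t\slash t)\otimes u^{k-1}.
\end{equation*}
The pullback map \(k^{\ast}K^{\bullet}\to \Omega^{\bullet}_{\mathcal{X}_{\infty}}\) sending \(u\mapsto z=F^{\ast}(\frac{1}{2\pi\sqrt{-1}}\log t)\) should be a quasi-isomorphism: in local charts adapted to the normal-crossing divisor \(E\), any holomorphic section on \(\mathcal{X}_{\infty}\) admits a polynomial expansion in \(z\) with log-form coefficients, so that \(K^{\bullet}\) represents \(Rk_{\ast}\mathbb{C}_{\mathcal{X}_{\infty}}\) on \(\mathcal{X}\).

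Then I would apply \(i^{\ast}\) and tensor with \(\mathscr{O}_{E}\). Under this restriction, the wedging operator \(\wedge f^{\ast}(\mathrm{d}t\slash t)\) present in the differential of \(K^{\bullet}\) becomes identified with the injection in
\begin{equation*}
0\to\Omega^{\bullet-1}_{\mathcal{X}}(\log E)\xrightarrow{\wedge f^{\ast}\mathrm{d}t\slash t}\Omega^{\bullet}_{\mathcal{X}}(\log E)\to\Omega^{\bullet}_{\mathcal{X}\slash\Delta}(\log E)\to 0,
\end{equation*}
and the \(u\)-adic filtration on \(K^{\bullet}\otimes\mathscr{O}_{E}\) gives a spectral sequence whose higher columns become acyclic after computing the cone of this short exact sequence. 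What survives is the single column corresponding to \(u^{0}\), which is precisely \(\Omega^{\bullet}_{\mathcal{X}\slash\Delta}(\log E)\otimes\mathscr{O}_{E}\).

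The hardest step is the verification that \(K^{\bullet}\) genuinely resolves \(Rk_{\ast}\mathbb{C}_{\mathcal{X}_{\infty}}\) and not merely \(Rj_{\ast}\mathbb{C}_{\mathcal{X}^{\ast}}\): one must check both that the formal translation \(u\mapsto u+1\) faithfully records the monodromy \(T\), and that the local log-polynomial expansions are exhaustive. Both claims rely on the semi-stability of \(f\) to produce adapted coordinate charts \((z_{0},\ldots,z_{n})\) with \(t=z_{0}\cdots z_{r}\) in which the argument can be carried out explicitly; once this identification is secured on the total space, all subsequent manipulations are formal.
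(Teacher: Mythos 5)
The paper does not actually prove this statement: it is recorded as part of the recollection of Steenbrink's construction of the limiting mixed Hodge structure, with the proof deferred to Steenbrink's \emph{Limits of Hodge structures}. Your strategy---Deligne's logarithmic de Rham theorem, the Koszul-type complex \(K^{\bullet}=\Omega^{\bullet}_{\mathcal{X}}(\log E)[u]\) modelling \(Rk_{\ast}\mathbb{C}_{\mathcal{X}_{\infty}}\), and the exactness of wedging with \(f^{\ast}(\mathrm{d}t\slash t)\) to collapse everything onto the \(u^{0}\)-column \(\Omega^{\bullet}_{\mathcal{X}\slash\Delta}(\log E)\otimes\mathscr{O}_{E}\)---is precisely the standard proof, so the architecture is correct.

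However, the justification you give for the load-bearing step is wrong as stated. The assertion that ``any holomorphic section on \(\mathcal{X}_{\infty}\) admits a polynomial expansion in \(z\) with log-form coefficients'' is false: already over \(\Delta^{\ast}\) the function \(e^{2\pi\sqrt{-1}az}=t^{a}\) with \(a\notin\mathbb{Z}\) is holomorphic on \(\mathfrak{h}\) and is not log-polynomial in \(z\). The inclusion \(K^{\bullet}\hookrightarrow k_{\ast}\Omega^{\bullet}_{\mathcal{X}_{\infty}}\) is very far from surjective; what must be shown is that it is a \emph{quasi-isomorphism}, and this is a cohomology computation on both sides over an adapted chart \(U\) with \(t=z_{0}\cdots z_{r}\). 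On one side \(k^{-1}(U)\) is homotopy equivalent to an \(r\)-torus, so \(H^{\bullet}(k^{-1}(U);\mathbb{C})\cong\Lambda^{\bullet}\mathbb{C}^{r}\); on the other, the \(u\)-filtration spectral sequence for \(K^{\bullet}\) has \(E_{1}\)-rows given by the Koszul complex of \(e\wedge(-)\), \(e=\sum_{i}\mathrm{d}z_{i}\slash z_{i}\), acting on \(H^{\bullet}(U\setminus E;\mathbb{C})=\Lambda^{\bullet}\langle \mathrm{d}z_{0}\slash z_{0},\ldots,\mathrm{d}z_{r}\slash z_{r}\rangle\), whose cohomology is \(\Lambda^{\bullet}\bigl(\mathbb{C}^{r+1}\slash\langle e\rangle\bigr)\) concentrated in \(u\)-degree \(0\); one then checks that the comparison map matches these and intertwines \(u\mapsto u+1\) with \(T\). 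Two further points to repair: (i) ``apply \(i^{\ast}\) and tensor with \(\mathscr{O}_{E}\)'' conflates the topological restriction \(i^{-1}\) appearing in the definition of \(\Phi_{f}\) with reduction modulo the ideal sheaf \(\mathcal{I}\) of \(E\); the map \(i^{-1}K^{\bullet}\to K^{\bullet}\otimes\mathscr{O}_{E}\) is an additional quasi-isomorphism requiring its own stalk computation. (ii) With the wedge written on the right, your differential \(\mathrm{d}(\omega\otimes u^{k})=\mathrm{d}\omega\otimes u^{k}+k(\omega\wedge f^{\ast}\mathrm{d}t\slash t)\otimes u^{k-1}\) does not square to zero; insert the sign \((-1)^{\deg\omega}\) or wedge with \(f^{\ast}(\mathrm{d}t\slash t)\) on the left.
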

Our goal is to define a mixed Hodge structure on \(\Phi_{f}\mathbb{C}_{\mathcal{X}}\)
via \(\Omega^{\bullet}_{\mathcal{X}\slash \Delta}(\log E)\otimes\mathscr{O}_{E}\)
at the level of complexes. We need two pieces of information:
(a) the Hodge filtration \(\mathcal{F}^{\bullet}\) 
and (b) the weight filtration \(\mathcal{W}_{\bullet}\).
For this purpose, we shall henceforth assume that 
\(E_{I}\) supports the \(\partial\bar{\partial}\)-lemma
for each subset \(I\).

\subsubsection{The Hodge filtration}
This part is easy; we may use the stupid filtration \(\mathcal{F}^{\bullet}\) on the
complex \(\Omega^{\bullet}_{\mathcal{X}\slash \Delta}(\log E)\otimes\mathscr{O}_{E}\),
i.e.
\begin{equation*}
    \mathcal{F}^{k}:=\Omega^{\bullet\ge k}_{\mathcal{X}\slash \Delta}(\log E)\otimes\mathscr{O}_{E}.
\end{equation*}
By the \(\partial\bar{\partial}\)-lemma, this puts a Hodge filtration 
on the hypercohomology.

\subsubsection{The monodromy weight filtration}

There is a natual 
filtration \(\mathcal{W}_{\bullet}\) on \(\Omega^{\bullet}_{\mathcal{X}}(\log E)\)
\begin{equation*}
    \mathcal{W}_{p}(\Omega^{\bullet}_{\mathcal{X}\slash \Delta}(\log E)\otimes\mathscr{O}_{E})
    :=\mbox{The image of}~\mathcal{W}_{p}\Omega^{\bullet}_{\mathcal{X}}(\log E)
\end{equation*}
under the canonical projection \(\Omega^{\bullet}_{\mathcal{X}}(\log E)\to 
\Omega^{\bullet}_{\mathcal{X}\slash \Delta}(\log E)\otimes\mathscr{O}_{E}\).

Consider the double complex \((\mathcal{D}^{\bullet,\bullet},D_{1},D_{2})\) with
\begin{align*}
    \mathcal{D}^{p,q}:&=\Omega^{p+q+1}_{\mathcal{X}}(\log E)\otimes\mathscr{O}_{E}
    \slash \mathcal{W}_{p}(\Omega^{p+q+1}_{\mathcal{X}}(\log E)\otimes\mathscr{O}_{E})\\
    &\left(\cong 
    \Omega^{p+q+1}_{\mathcal{X}}(\log E)\slash \mathcal{W}_{p}(\Omega^{p+q+1}_{\mathcal{X}}(\log E))~
    \mbox{whenever}~p\ge 0\right)
\end{align*}
for \(p,q\ge 0\)
with differentials
\begin{equation*}
    \begin{cases}
        D_{1}\colon \mathcal{D}^{p,q}\to\mathcal{D}^{p+1,q}~~&\mbox{via}~\omega\mapsto (\mathrm{d}t\slash t)\wedge\omega\\
        D_{2}\colon \mathcal{D}^{p,q}\to\mathcal{D}^{p,q+1}~~&\mbox{via}~\omega\mapsto \mathrm{d}\omega.
    \end{cases}
\end{equation*}
Define \(\mu\colon \Omega^{q}_{\mathcal{X}\slash\Delta}(\log E)\otimes\mathscr{O}_{E}
\to \mathcal{D}^{0,q}\) via
\begin{equation*}
    \omega\mapsto (-1)^{q}\frac{\mathrm{d}t}{t}\wedge\omega~\mod{\mathcal{W}_{0}\Omega_{\mathcal{X}}^{q+1}(\log E)}.
\end{equation*}
With this definition, we see that
\(\mu\) induces a morphism of complexes
\begin{equation*}
    \Omega^{\bullet}_{\mathcal{X}\slash\Delta}(\log E)\otimes\mathscr{O}_{E} 
    \to \mathrm{Tot}(\mathcal{D}^{\bullet,\bullet}).
\end{equation*}
Let us define an increasing filtration 
\(\mathcal{W}_{\bullet}\) on \(\mathcal{D}^{\bullet,\bullet}\) by
\begin{equation*}
    \mathcal{W}_{r}\mathcal{D}^{p,q}:=\mbox{the image of \(\mathcal{W}_{r+p+1}
    (\Omega^{p+q+1}_{\mathcal{X}}(\log E)\otimes\mathscr{O}_{E})\) in \(\mathcal{D}^{p,q}\)}.
\end{equation*}
Also, let us define the Hodge filtration on \(\mathcal{D}^{p,q}\) via
\begin{equation*}
    \mathcal{F}^{k}(\mathrm{Tot}(\mathcal{D}^{\bullet,\bullet})):=\bigoplus_{p}\bigoplus_{q\ge k}
    \mathcal{D}^{p,q}.
\end{equation*}

\begin{theorem}
\label{thm:quasi-iso}
\(\mu\) is a quasi-isomorphism between bi-filtered complexes
\begin{equation*}
(\Omega^{\bullet}_{\mathcal{X}\slash\Delta}(\log E)\otimes
\mathscr{O}_{E},\mathcal{W}_{\bullet},\mathcal{F}^{\bullet})~\mbox{and}~    
(\mathrm{Tot}(\mathcal{D}^{\bullet,\bullet}),\mathcal{W}_{\bullet},\mathcal{F}^{\bullet}).
\end{equation*}
\end{theorem}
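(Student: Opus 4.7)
The plan is to follow Steenbrink's original argument in \cite{1975-Steenbrink-limits-of-hodge-structures}, observing that the assertion is a purely sheaf-theoretic statement about complexes on $E$ near a simple normal crossings divisor. The $\partial\bar\partial$-hypothesis on the $E_I$ will not intervene at this stage; it enters only at the subsequent step of extracting a genuine mixed Hodge structure on the hypercohomology. Accordingly, I split the argument into three pieces: (a) show $\mu$ is a quasi-isomorphism of underlying complexes, (b) show the induced maps on $\mathrm{Gr}^{\mathcal W}_r$ are quasi-isomorphisms, and (c) check compatibility with the Hodge filtration $\mathcal F^{\bullet}$. Part (c) is essentially immediate, since $\mathcal F^{\bullet}$ is the stupid truncation on both sides and $\mu$ merely shifts the $q$-index.

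For (a), I would reduce to a local computation in coordinates $(t_1,\ldots,t_n)$ where $f=t_1\cdots t_k$. In this chart both complexes become finite free $\mathscr{O}_E$-modules with explicit generators in the $\mathrm{d}t_j/t_j$ and the remaining $\mathrm{d}z_\ell$, and a Koszul-type contraction shows that the mapping cone of $\mu$ is acyclic. This is entirely local and independent of any global hypothesis. For (b), I would invoke the iterated Poincar\'e residue isomorphism
\begin{equation*}
\mathrm{Res}\colon \mathrm{Gr}^{\mathcal W}_k\Omega^{n}_{\mathcal X}(\log E)\xrightarrow{\sim}\iota_{k,\ast}\Omega^{n-k}_{E(k)}
\end{equation*}
to identify both sides. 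On the source, after quotienting further by the image of $\wedge f^{\ast}(\mathrm{d}t/t)$, the graded piece $\mathrm{Gr}^{\mathcal W}_r(\Omega^{\bullet}_{\mathcal X\slash\Delta}(\log E)\otimes\mathscr{O}_E)$ becomes a combination of terms of the form $\iota_{r+1,\ast}\Omega^{\bullet-r-1}_{E(r+1)}$. On the target, the shift $\mathcal{W}_r\mathcal{D}^{p,q}=\text{image of }\mathcal{W}_{r+p+1}$ combined with the totalization produces the same collection of pieces: the extra $p+1$ records exactly the wedge with $\mathrm{d}t/t$ that $\mu$ performs in passing from the relative to the ambient log complex. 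The induced map on graded pieces then recovers the alternating restriction/Gysin maps $\psi$ and $\phi$ up to the signs $(-1)^q$ in $\mu$ and the $(-1)^{j-1}$'s already built into $\psi$ and $\phi$.

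The main obstacle will be the sign bookkeeping: simultaneously tracking the weight shift by $p+1$ on $\mathcal{D}^{\bullet,\bullet}$, the $(-1)^q$ in the definition of $\mu$, and the alternating signs in iterated Poincar\'e residues, and confirming that they conspire to realize $\psi$ and $\phi$ exactly on graded pieces. While each ingredient is standard, verifying that the three operations combine into a genuine bifiltered quasi-isomorphism---and not merely filtered on each filtration separately---requires an explicit check on each weight stratum. Once this is in place, the five-lemma applied to the long exact sequences of successive weight quotients, combined with part (a), promotes the graded statement to the bifiltered quasi-isomorphism claimed.
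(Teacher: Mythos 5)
The paper states Theorem \ref{thm:quasi-iso} without proof: it is Steenbrink's classical bifiltered comparison (cf.\ \cite{1975-Steenbrink-limits-of-hodge-structures} and \cite{2008-Peters-Steenbrink-mixed-Hodge-structures}*{Ch.~11}), a purely local sheaf-theoretic statement about the semi-stable model that is indeed independent of the \(\partial\bar{\partial}\)-hypothesis, exactly as you observe. Your outline---row-exactness of \(D_{1}\) for the underlying quasi-isomorphism, iterated Poincar\'{e} residues for the \(\mathcal{W}\)-graded pieces, and the stupid-filtration compatibility for \(\mathcal{F}\)---is the standard and correct route; the only slip is the degree shift in your identification of the graded pieces, which should read \((\iota_{r+1})_{\ast}\Omega^{\bullet-r}_{E(r+1)}\) rather than \(\Omega^{\bullet-r-1}_{E(r+1)}\), since wedging with \(\mathrm{d}t/t\) raises the form degree by one before the residue drops it by \(r+1\).
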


\begin{definition}[The monodromy weight filtration]
    We define another increasing filtration \(\mathcal{W}(M)_{\bullet}\)
    on the complex \(\mathcal{D}^{p,q}\) via
    \begin{equation*}
        \mathcal{W}(M)_{r}\mathcal{D}^{p,q}:=
        \mbox{the image of \(\mathcal{W}_{r+2p+1}
    (\Omega^{p+q+1}_{\mathcal{X}}(\log E)\otimes\mathscr{O}_{E})\) in \(\mathcal{D}^{p,q}\)}.
    \end{equation*}
The filtration \(\mathcal{W}(M)_{\bullet}\) is called 
the \emph{monodromy weight filtration}.
\end{definition}

\begin{remark}
    As we shall see, the punchline is that \(D_{1}\) becomes the zero map
    in the associated graded module with respect to \(\mathcal{W}(M)_{\bullet}\).
    
\end{remark}
    
    Note that \(\mathcal{W}(M)_{r}\mathcal{D}^{p,q}=0\) whenever \(r+2p+1\le p\),
    i.e.~\(r\le -1-p\).
    Similarly, we have
    \begin{equation*}
        \mathrm{Gr}_{r}^{\mathcal{W}(M)}(\mathcal{D}^{p,q})=
        \mathrm{Gr}_{r+2p+1}^{\mathcal{W}}\Omega^{p+q+1}_{\mathcal{X}}(\log E)
        \cong (\iota_{r+2p+1})_{\ast}\Omega_{E(r+2p+1)}^{q-p-r}
    \end{equation*}
    and the last isomorphism holds when \(r+2p\ge 0\).
    In particular, this implies 
    \begin{equation*}
        \mathrm{Gr}_{r}^{\mathcal{W}(M)}(\mathcal{D}^{p,q}) = 0
    \end{equation*}
    whenever \(r\ge q-p+1\ge -2p\) or \(r\le -p-1\).
    Because of this shift, the morphism \(D_{1}\)
    becomes zero on the associated graded complex.
    The \((p,q)\) term in the associated graded complex 
    \(\mathrm{Gr}_{r}^{\mathcal{W}(M)}(\mathrm{Tot}(\mathcal{D}^{\bullet,\bullet}))\) becomes
    \begin{equation*}
        \bigoplus_{-p\le r\le q-p} \mathrm{Gr}_{r+2p+1}^{\mathcal{W}}\Omega^{p+q+1}_{\mathcal{X}}(\log E)
        \cong \bigoplus_{-p\le r\le q-p} (\iota_{r+2p+1})_{\ast}\Omega^{p+q}_{E(r+2p+1)}[-r-2p]
    \end{equation*}
    and the 
    cohomology of \((\mathrm{Gr}_{r}^{\mathcal{W}(M)}(\mathrm{Tot}(\mathcal{D}^{\bullet,\bullet})),
    D_{1}+D_{2}=D_{2})\)
    is \(\mathbb{C}_{E(r+2p+1)}[-r-2p]\),
    which computes the cohomology
    of the smooth variety \(E(r+2p+1)\) up to a shift.
    We remind the reader that the summation is taken over all
    non-negative integers \(p\) and \(q\) satisfying \(-p\le r\le q-p\).
    It is now obvious that we can define the Hodge structure using
    the usual Hodge filtration on smooth varieties.

\begin{instance}[Conifold transitions]
    Let \(X\nearrow Y\) be a conifold transition
    between Calabi--Yau threefolds, i.e.~\(X\) and \(Y\) are smooth
    Calabi--Yau threefolds and there exist
    a complex degeneration \(X\rightsquigarrow \bar{X}\)
    to a conifold threefold and a small resolution \(Y\to \bar{X}\). 
    Let \(\{p_{1},\ldots,p_{k}\}\) be the set of singular points in \(\bar{X}\)
    and \(C_{1},\ldots,C_{k}\) be the rational curves in \(Y\) 
    lying over \(p_{1},\ldots,p_{k}\).

    By Friedman's result, the smoothing corresponds to a non-trivial relation
    \begin{equation*}
        \sum_{i=1}^{k} m_{i}[C_{i}] = 0 \in\mathrm{H}_{2}(Y,\mathbb{C})~\mbox{with}~m_{i}\ne 0~\mbox{for all}~i.
    \end{equation*}
    Let \(\pi\colon\mathcal{X}'\to\Delta\) be the 
    complex degeneration. To obtain a semi-stable model, we first perform a degree two base change and 
    then blow up all the singularities on the central fiber. 
    Denote by \(f\colon \mathcal{X}\to \Delta\)
    the resulting semi-stable family.
    \begin{equation*}
    \begin{tikzcd}
        &\mathcal{X}\ar[r]\ar[d,"f"']&\mathcal{X}'\ar[d,"\pi"]\\
        &\Delta\ar[r] &\Delta.
    \end{tikzcd}    
    \end{equation*}
    The central fiber \(f^{-1}(0)=\tilde{Y}\bigcup \cup_{i=1}^{r} Q_{i}\)
    is a simple normal crossing divisor. 
    Here
    \begin{equation*}
        \tilde{Y} = \mathrm{Bl}_{\sqcup_{i=1}^{r} C_{i}} Y~\mbox{and}~Q_{i}\subset \mathbf{P}^{4}~\mbox{is
        a smooth quadric hypersurface}.
    \end{equation*}
    Moreover, the exceptional divisor \(E_{i}=\tilde{Y}\cap Q_{i}\) is 
    isomorphic to a quadric in \(\mathbf{P}^{3}\).

    We can compute the graded complex \(\mathrm{Gr}_{r}^{\mathcal{W}(M)}\mathcal{D}^{p,q}\) in this case. We have
    \begin{equation*}
        E(1) = \tilde{Y}\coprod \sqcup_{i=1}^{r} Q_{i},~E(2) = \coprod_{i=1}^{r} E_{i},~\mbox{and}~E(k)=\emptyset~\mbox{for}~k\ge 3.
    \end{equation*}
    We wish to compute the group \(\mathrm{Gr}^{\mathcal{W}(M)}_{4}\mathrm{H}^{3}(X,\mathbb{C})\)
    which is the cohomology of the complex
    \begin{equation*}
        E_{1}^{-2,4}=\mathbf{H}^{2}(\mathrm{Gr}_{2}^{\mathcal{W}(M)})\to E_{1}^{-1,4}
    =\mathbf{H}^{3}(\mathrm{Gr}_{1}^{\mathcal{W}(M)})\to E_{1}^{0,4}=\mathbf{H}^{4}(\mathrm{Gr}_{0}^{\mathcal{W}(M)}).
    \end{equation*}
    A similar calculation shows that 
    \begin{equation*}
        \mathrm{Gr}_{1}^{\mathcal{W}(M)}\cong \mathbb{C}_{E(2)}[-1],~\mbox{and}~
        \mathrm{Gr}_{0}^{\mathcal{W}(M)}\cong \mathbb{C}_{E(1)}
    \end{equation*}
    and the sequence is transformed into
    \begin{equation*}
        0\to \mathrm{H}^{2}(E(2),\mathbb{C})=\bigoplus_{i=1}^{r}\mathrm{H}^{2}(E_{i},\mathbb{C})
        \to \mathrm{H}^{4}(E(1),\mathbb{C})=\mathrm{H}^{4}(\tilde{Y},\mathbb{C})\oplus 
        \bigoplus_{i=1}^{r}\mathrm{H}^{4}(Q_{i},\mathbb{C}).
    \end{equation*}
    This is dual to the sequence in \cite{2018-Lee-Lin-Wang-towards-A+B-theory-in-conifold-transitions-for-CY-threefolds}*{Equation (1.4)}.
    
\end{instance}

\begin{theorem}
\label{theorem:steenbrink-mhs}
    The triple \((\mathrm{Tot}(\mathcal{D}^{\bullet,\bullet}),\mathcal{W}(M)_{\bullet},\mathcal{F}^{\bullet})\)
    forms a mixed Hodge structure. 
    Consequently, by Theorem \ref{thm:quasi-iso},
    this puts a mixed Hodge structure on 
    \(\Omega^{\bullet}_{\mathcal{X}\slash\Delta}(\log E)\otimes
    \mathscr{O}_{E}\sim \Phi_{f}\mathbb{C}_{\mathcal{X}}\).
\end{theorem}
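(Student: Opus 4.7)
The plan is to follow Steenbrink's original strategy for the one-parameter degeneration, but replacing the use of K\"ahler Hodge theory on the strata $E_I$ by the $\partial\bar\partial$-lemma hypothesis imposed earlier in \S\ref{subsec:st-limiting-mhs}. By Theorem \ref{thm:quasi-iso} it suffices to put a mixed Hodge structure on $(\mathrm{Tot}(\mathcal{D}^{\bullet,\bullet}),\mathcal{W}(M)_\bullet,\mathcal{F}^\bullet)$, where everything has been engineered so that the graded pieces of $\mathcal{W}(M)_\bullet$ decompose as direct sums of shifted de Rham complexes on the smooth strata $E(k)$.

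First I would compute the $E_1$-page of the spectral sequence of the weight filtration. Using the identification already recorded in the excerpt,
\[
\mathrm{Gr}_r^{\mathcal{W}(M)}(\mathcal{D}^{p,q})\cong (\iota_{2p+r+1})_{\ast}\Omega^{q-p-r}_{E(2p+r+1)},
\]
together with the key observation that $D_1$ vanishes on the associated graded (because the index shift $r\mapsto r+2p+1$ makes $D_1$ land in a smaller piece of $\mathcal{W}(M)_\bullet$), the complex $\mathrm{Gr}_r^{\mathcal{W}(M)}(\mathrm{Tot}(\mathcal{D}^{\bullet,\bullet}))$ becomes, after re-indexing, a direct sum over $k\ge\max(0,r)$ of shifted holomorphic de Rham complexes $\Omega^{\bullet}_{E(2k-r+1)}$. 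Taking hypercohomology gives a formula of the form
\[
{}_{\mathcal{W}(M)}E_1^{-r,q+r}\;\cong\;\bigoplus_{k\ge\max(0,r)}\mathrm{H}^{q-2k+r}\bigl(E(2k-r+1);\mathbb{C}\bigr)
\]
(possibly with a Tate twist encoded by $k$). This is the point where the $\partial\bar\partial$ hypothesis enters: because every intersection $E_I$ supports the $\partial\bar\partial$-lemma, each $\mathrm{H}^{\bullet}(E(k);\mathbb{C})$ carries a pure Hodge structure of the expected weight, and the stupid filtration $\mathcal{F}^\bullet$ on $\Omega^\bullet_{E(k)}$ induces the correct Hodge filtration on its hypercohomology.

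Next, I would verify that the induced filtration $\mathcal{F}^\bullet$ on ${}_{\mathcal{W}(M)}E_1^{-r,q+r}$ agrees term by term with the direct sum of the standard Hodge filtrations on each $\mathrm{H}^{\bullet}(E(k);\mathbb{C})$; this is a tracking-of-indices exercise that is transparent once one identifies where $\Omega^{\bullet\ge j}_{\mathcal{X}/\Delta}(\log E)\otimes\mathscr O_E$ sits under the quasi-isomorphism. Then I would show that the differential $d_1$ on the $E_1$-page is, up to signs, an alternating sum of restriction maps $\psi$ and Gysin maps $\phi$ between the strata. Since both $\psi$ and $\phi$ are morphisms of pure Hodge structures (the latter with a Tate twist), $d_1$ is a morphism of pure Hodge structures of the correct weight, and consequently its cohomology ${}_{\mathcal{W}(M)}E_2^{-r,q+r}$ inherits a pure Hodge structure of weight $q+r$.

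Finally, the mixed Hodge structure on the abutment follows once one shows ${}_{\mathcal{W}(M)}E_2={}_{\mathcal{W}(M)}E_\infty$. The standard argument is formal: any higher differential $d_s$ ($s\ge 2$) would be a map between pure Hodge structures of different weights, hence zero. I expect the main obstacle to be the bookkeeping in steps two and three, especially pinning down signs and Tate twists so that $d_1$ manifestly matches the alternating combinatorial map $\psi\pm\phi$; in the classical K\"ahler setting this is done by comparing Poincar\'e residues, and here the same formal calculation goes through because the identifications at the graded level only use the holomorphic structure on the strata $E(k)$, not their K\"ahlerness, while the existence of the Hodge decomposition on $\mathrm{H}^\bullet(E(k);\mathbb{C})$ is supplied by the $\partial\bar\partial$-lemma.
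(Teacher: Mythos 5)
Your proposal is correct and takes essentially the same route the paper intends: the paper gives no separate proof of this theorem, but the discussion immediately preceding it already carries out your first two steps (the vanishing of \(D_{1}\) on \(\mathrm{Gr}^{\mathcal{W}(M)}_{r}\), the identification of the graded pieces with shifted de Rham complexes on the smooth strata \(E(k)\), and purity of \(\mathrm{H}^{\bullet}(E(k);\mathbb{C})\) via the \(\partial\bar{\partial}\)-hypothesis on all \(E_{I}\)), and the \(E_{1}\)-differentials are indeed the alternating restriction and Gysin maps, as the paper uses explicitly in Section 3. The only step to phrase with care is the degeneration at \(E_{2}\): the vanishing of \(d_{s}\) for \(s\ge 2\) is not just the naive weight count but rests on Deligne's two-filtrations lemma for \((\mathcal{W}(M)_{\bullet},\mathcal{F}^{\bullet})\), whose input is exactly the strictness and purity on the strata that your invocation of the \(\partial\bar{\partial}\)-lemma supplies.
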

%Strictly speaking, we haven't completed the story yet, since
%the weight filtration should be defined over \(\mathbb{Q}\).
%We shall deal with this issue in the next subsection.

\begin{corollary}
    The cohomology of the derived pushforward
    \begin{equation*}
        \mathbf{R}f_{\ast} \Omega^{\bullet}_{\mathcal{X}\slash\Delta}(\log E)
    \end{equation*}
    is a locally free sheaf.
\end{corollary}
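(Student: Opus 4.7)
The plan is to deduce local freeness from the cohomology-and-base-change theorem by showing that the fiber dimension of $\mathbf{R}^{k}f_{\ast}\Omega^{\bullet}_{\mathcal{X}\slash\Delta}(\log E)$ is the same at every point of $\Delta$. Since $f$ is semi-stable, each $\Omega^{p}_{\mathcal{X}\slash\Delta}(\log E)$ is a locally free $\mathscr{O}_{\mathcal{X}}$-module and hence $f$-flat, so hypercohomology is compatible with restriction to fibers, giving
\[
\bigl(\mathbf{R}^{k}f_{\ast}\Omega^{\bullet}_{\mathcal{X}\slash\Delta}(\log E)\bigr)\otimes\mathbb{C}(t)
\;\simeq\;\mathbf{H}^{k}\bigl(\mathcal{X}_{t};\,\Omega^{\bullet}_{\mathcal{X}\slash\Delta}(\log E)\otimes\mathbb{C}(t)\bigr)
\]
for every $t\in\Delta$.

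I would then compute this fiber dimension in two cases. For $t\in\Delta^{\ast}$, the divisor $E$ does not meet $\mathcal{X}_{t}$, so the restricted complex is the ordinary holomorphic de Rham complex of the smooth fiber $\mathcal{X}_{t}$, whose hypercohomology is $\mathrm{H}^{k}(\mathcal{X}_{t};\mathbb{C})$ by the holomorphic Poincar\'e lemma. For $t=0$, the restricted complex is $\Omega^{\bullet}_{\mathcal{X}\slash\Delta}(\log E)\otimes\mathscr{O}_{E}$; by the quasi-isomorphism with $\Phi_{f}\mathbb{C}_{\mathcal{X}}$ stated just before Theorem \ref{theorem:steenbrink-mhs}, its hypercohomology is $\mathrm{H}^{k}(\mathcal{X}_{\infty};\mathbb{C})$, the cohomology of the canonical fiber. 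Since $\mathcal{X}_{\infty}$ is homotopic to every smooth fiber of $f$, these dimensions all coincide, and hence $t\mapsto\dim\bigl(\mathbf{R}^{k}f_{\ast}\Omega^{\bullet}_{\mathcal{X}\slash\Delta}(\log E)\bigr)\otimes\mathbb{C}(t)$ is constant throughout $\Delta$.

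Combining constancy of the fiber dimension with the upper semi-continuity of hypercohomology for a bounded complex of flat coherent sheaves along a proper morphism, Grauert's theorem in its hypercohomological form delivers local freeness of $\mathbf{R}^{k}f_{\ast}\Omega^{\bullet}_{\mathcal{X}\slash\Delta}(\log E)$ on $\Delta$. The step that requires the most care is the base-change identification at $t=0$: one must genuinely invoke both the locally free nature of each $\Omega^{p}_{\mathcal{X}\slash\Delta}(\log E)$ and the quasi-isomorphism $\Omega^{\bullet}_{\mathcal{X}\slash\Delta}(\log E)\otimes\mathscr{O}_{E}\simeq\Phi_{f}\mathbb{C}_{\mathcal{X}}$ in order to conclude that the central fiber dimension matches the Betti number of a nearby smooth fiber, and this is precisely where Theorem \ref{theorem:steenbrink-mhs} (which supplies the \(\partial\bar{\partial}\)-compatible mixed Hodge structure at $t=0$) enters.
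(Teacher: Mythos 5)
Your argument is correct and is essentially the intended one: the paper states this corollary without proof as a direct consequence of the preceding construction, and the standard route you follow --- flatness of the terms of $\Omega^{\bullet}_{\mathcal{X}\slash\Delta}(\log E)$, identification of the central-fiber hypercohomology with $\mathrm{H}^{k}(\mathcal{X}_{\infty};\mathbb{C})$ via the quasi-isomorphism with $\Phi_{f}\mathbb{C}_{\mathcal{X}}$, constancy of fiber dimension, and then Grauert (equivalently, the Grothendieck-complex argument over the discrete valuation ring $\mathbb{C}\{t\}$, which simultaneously yields the base-change isomorphism you assert up front) --- is exactly the expected proof. The one quibble is attribution: the input at $t=0$ is the unnumbered quasi-isomorphism theorem $\Phi_{f}\mathbb{C}_{\mathcal{X}}\simeq\Omega^{\bullet}_{\mathcal{X}\slash\Delta}(\log E)\otimes\mathscr{O}_{E}$, a purely topological statement requiring no $\partial\bar{\partial}$-hypothesis, rather than Theorem \ref{theorem:steenbrink-mhs}; the mixed Hodge structure enters only for the subsequent corollary on the individual Hodge bundles $R^{q}f_{\ast}\Omega^{p}_{\mathcal{X}\slash\Delta}(\log E)$.
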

\begin{corollary}
    The sheaf
    \begin{equation*}
        R^{q}f_{\ast} \Omega^{p}_{\mathcal{X}\slash\Delta}(\log E)
    \end{equation*}
    is locally free. Consequently, 
    \(\mathbf{R}f_{\ast}\mathcal{F}^{k}\) gives a locally free extension of the Hodge bundles.
\end{corollary}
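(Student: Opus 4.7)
The plan is to deduce local freeness from the degeneration of the relative Hodge-to-de Rham spectral sequence
\begin{equation*}
E_1^{p,q} = R^{q} f_{\ast} \Omega^{p}_{\mathcal{X}\slash\Delta}(\log E) \Longrightarrow \mathbf{R}^{p+q} f_{\ast} \Omega^{\bullet}_{\mathcal{X}\slash\Delta}(\log E),
\end{equation*}
together with the local freeness of the total hypercohomology furnished by the previous corollary. First I would verify fiberwise $E_{1}$-degeneration. Over $t\in\Delta^{\ast}$, the restriction $\Omega^{\bullet}_{\mathcal{X}\slash\Delta}(\log E)|_{\mathcal{X}_{t}}$ coincides with the ordinary de Rham complex $\Omega^{\bullet}_{\mathcal{X}_{t}}$, so $E_{1}$-degeneration on that fiber is exactly the $\partial\bar{\partial}$-lemma for the smooth $\partial\bar{\partial}$-manifold $\mathcal{X}_{t}$, which is part of the standing hypothesis. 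Over $t=0$, the restriction becomes $\Omega^{\bullet}_{\mathcal{X}\slash\Delta}(\log E)\otimes\mathscr{O}_{E}$, and $E_{1}$-degeneration is built into the mixed Hodge structure produced by Theorem \ref{theorem:steenbrink-mhs}.

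Next, write $h^{p,q}(t) := \dim_{\mathbb{C}}\mathrm{H}^{q}(\mathcal{X}_{t},\Omega^{p}_{\mathcal{X}\slash\Delta}(\log E)|_{\mathcal{X}_{t}})$. Fiberwise $E_{1}$-degeneration yields
\begin{equation*}
\sum_{p+q=n} h^{p,q}(t) = \dim_{\mathbb{C}}\mathbf{H}^{n}\bigl(\mathcal{X}_{t},\Omega^{\bullet}_{\mathcal{X}\slash\Delta}(\log E)|_{\mathcal{X}_{t}}\bigr)
\end{equation*}
for every $t\in\Delta$, and the right-hand side is independent of $t$ by the preceding corollary. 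Upper semicontinuity of each $h^{p,q}$ then forces each term individually to be constant in $t$. Grauert's theorem, once flatness of $\Omega^{p}_{\mathcal{X}\slash\Delta}(\log E)$ over $\Delta$ (standard in the semistable setting) puts us in the regime where cohomology and base change applies, produces the desired local freeness of every $R^{q}f_{\ast}\Omega^{p}_{\mathcal{X}\slash\Delta}(\log E)$.

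For the filtration claim, I would descend on $k$ using the short exact sequence of complexes
\begin{equation*}
0 \to \mathcal{F}^{k+1} \to \mathcal{F}^{k} \to \Omega^{k}_{\mathcal{X}\slash\Delta}(\log E)[-k] \to 0,
\end{equation*}
starting from $\mathcal{F}^{k}=0$ for $k>n$. The $E_{1}$-degeneration breaks the associated long exact sequence of derived pushforwards into short exact sequences, exhibiting $\mathbf{R}^{n}f_{\ast}\mathcal{F}^{k}$ as an iterated extension of the locally free sheaves $R^{n-k}f_{\ast}\Omega^{k}_{\mathcal{X}\slash\Delta}(\log E)$; local freeness of $\mathbf{R}f_{\ast}\mathcal{F}^{k}$ then follows, and since this filtration restricts on $\Delta^{\ast}$ to the ordinary Hodge filtration on $\mathrm{H}^{n}(\mathcal{X}_{t};\mathbb{C})$, we obtain the desired locally free extension of the Hodge bundles. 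The main obstacle is securing the base-change isomorphisms at $t=0$; this is exactly the point where semistability, which makes the relative log differentials flat over $\Delta$, together with the constancy of fiber dimensions already established, becomes indispensable.
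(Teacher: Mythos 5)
Your argument is correct: the paper states this corollary without an explicit proof, treating it as the standard consequence of Theorem \ref{theorem:steenbrink-mhs} via exactly the Deligne--Steenbrink mechanism you describe (fiberwise $E_{1}$-degeneration from the $\partial\bar{\partial}$-lemma on the smooth fibers and from the cohomological mixed Hodge complex on the central fiber, constancy of $\sum_{p+q=n}h^{p,q}(t)$ from the preceding corollary, upper semicontinuity plus Grauert, and then descending induction on the filtration). So your proposal matches the intended proof in both substance and structure.
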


\section{The period-map metric
on the moduli of CY \texorpdfstring{\(\partial\bar{\partial}\)}{ddbar}-manifolds}

\subsection{Weil--Petersson metric and the period-map metric}
For a compact K\"{a}hler CY manifold \((X,\omega)\) 
of complex dimension \(n\), it is known that
its Kuranishi space \(S\) is smooth. 
Let \(\mathcal{X}\to S\)
be a local universal deformation of \(X\) 
with a fixed K\"{a}hler class \([\omega]\in \mathrm{H}^{2}(X;\mathbb{Z})\).
There exists a canonical metric on \(S\), called
the Weil--Petersson metric. Indeed,
Yau's theorem 
\cite{1978-Yau-on-the-ricci-curvature-of-a-compact-kahler-manifold-and-the-complex-monge-ampere-equation-i}
provides us with a Ricci flat metric \(g_{s}\)
on \(\mathcal{X}_{s}\) whose K\"{a}hler class 
equals \([\omega]\).
Let
\(\rho\colon T_{s}S\to\mathrm{H}^{1}(\mathcal{X}_{s},T\mathcal{X}_{s})\) 
be the Kodaira--Spencer map at \(s\in S\). 
Identify \(\mathrm{H}^{1}(\mathcal{X}_{s},T\mathcal{X}_{s})\) 
with \(\mathbb{H}^{0,1}(T\mathcal{X}_{s})\)
(the \(T\mathcal{X}_{s}\)-valued harmonic \((0,1)\)-forms w.r.t.~\(g_{s}\)).
For \(\lambda,\theta\in T_{s}S\),
one can check that the pairing
\begin{eqnarray*}
G_{s}(\lambda,\theta):=\int_{\mathcal{X}_{s}} \langle\rho(\lambda),\rho(\theta)\rangle_{s}
\end{eqnarray*}
is positive definite and hence defines a metric on \(S\).
Here \(\langle-,-\rangle_{s}\) is the product
on \(\mathbb{H}^{0,1}(T\mathcal{X}_{s})\) induced by \(g_{s}\). 
The metric \(G\) is called the \emph{Weil--Petersson metric};
this is a K\"{a}hler metric on \(S\).

Denote by \(Q\colon\mathrm{H}^{n}(X;\mathbb{C})\times
\mathrm{H}^{n}(X;\mathbb{C})\to\mathbb{C}\) the topological pairing
\begin{eqnarray}
\label{eq:topological-pairing}
Q([\alpha],[\beta]) = (-1)^{n(n-1)/2}\int_{X}\alpha\wedge\beta.
\end{eqnarray}
From Hodge theory, the sesquilinear form
\begin{eqnarray}
\label{eq:sesuilinear-pairing-kahler}
H(u,v):=Q(Cu,\bar{v})
\end{eqnarray}
defines a positive definite hermitian 
pairing on \(\mathrm{H}^{n}_{\mathrm{prim}}(X;\mathbb{C})\).
Here \(C\) is the Weil operator: 
\begin{eqnarray*}
Cu=\sqrt{-1}^{p-q}u~\hspace{0.5cm}\mbox{for}~u\in\mathrm{H}^{p,q}(X).
\end{eqnarray*}

As observed by Bogomolov and Tian, the K\"{a}hler potential of \(G\)
is given by the formula
\begin{eqnarray*}
-\log \tilde{Q}(\Omega_{s},\bar{\Omega}_{s}) = -\log \sqrt{-1}^{n}\left(\int_{\mathcal{X}_{s}}
\Omega_{s}\wedge\bar{\Omega}_{s}\right)
\end{eqnarray*}
and the K\"{a}hler form for \(G\) can be calculated by
\begin{eqnarray*}
-\frac{\sqrt{-1}}{2}\partial\bar{\partial}\log \tilde{Q}(\Omega_{s},\bar{\Omega}_{s})
=\frac{\sqrt{-1}}{2}
\sum_{i,j}-\partial_{t_{i}}\partial_{t_{\bar{j}}}
\log \tilde{Q}(\Omega_{s},\bar{\Omega}_{s})\mathrm{d}t_{i}\wedge\mathrm{d}\bar{t}_{j}.
\end{eqnarray*}

In other words, the Weil--Petersson metric is nothing but
the pullback of the Fubini--Study metric induced by \(H\)
via the local Torelli immersion
\begin{eqnarray*}
S\to \mathbf{P}\mathrm{H}^{n}_{\mathrm{prim}}(X;\mathbb{C}).
\end{eqnarray*}

\begin{remark}
The local universal complex deformation of \(X\) is isomorphic
to an open set \(U\) in \(\mathrm{H}^{1}(X;TX)\). Put
\begin{eqnarray*}
\mathrm{H}^{1}(X;TX)_{[\omega]}:=\{[\theta]\in\mathrm{H}^{1}(X;TX)~|~
[\theta\lrcorner\omega]=0~\mbox{in}~\mathrm{H}^{0,2}(X)\}
\end{eqnarray*}
and set \(U_{[\omega]}:=U\cap \mathrm{H}^{1}(X;TX)_{[\omega]}\).
Then \(U_{[\omega]}\) consists of local deformations of \(X\)
preserving the polarization \([\omega]\in\mathrm{H}^{2}(X;\mathbb{Z})\).

If \(X\) is a strict Calabi--Yau, 
i.e.~\(\mathrm{H}^{d}(X;\mathcal{O}_{X})=0\) for \(0<d<n\), then
we particularly have 
\begin{eqnarray*}
\mathrm{H}^{1}(X;TX)_{[\omega]} = \mathrm{H}^{1}(X;TX)
\end{eqnarray*}
when \(n\ge 3\). Moreover, we have \(\mathrm{H}^{1}(X;TX)\cong
\mathrm{H}^{1}(X;\Omega_{X}^{n-1})=\mathrm{H}^{1}_{\mathrm{prim}}(X;\Omega_{X}^{n-1})\), 
because
\begin{eqnarray*}
([\theta]\lrcorner~\Omega)\wedge\omega = \pm \Omega\wedge ([\theta]\lrcorner~\omega)=0.
\end{eqnarray*}
Said differently, the sesquilinear pairing \(H\) in 
\eqref{eq:sesuilinear-pairing-kahler}
defines a hermitian metric on 
\(\mathrm{H}^{n}(X;\mathbb{C})\).
In this case, thanks to the formula for the 
K\"{a}hler potential, the Weil--Petersson metric is 
independent of the choice of the polarization \([\omega]\).
\end{remark}

For a non-K\"{a}hler CY \(\partial\bar{\partial}\)-manifold \(X\), we do not have 
the notion of primitive cohomology and even in the strict case, we do not know
whether or not the second Hodge--Riemann bilinear relation holds on 
\(\mathrm{H}^{n-1,1}(X)\). However, we can still consider the ``potential function''
\begin{eqnarray*}
-\partial\bar{\partial}\log \tilde{Q}(\Omega_{s},\bar{\Omega}_{s})
\end{eqnarray*} 
and consider the associated two form
\begin{eqnarray}
\label{eq:kahler-potential}
-\frac{\sqrt{-1}}{2}\partial\bar{\partial}\log \tilde{Q}(\Omega_{s},\bar{\Omega}_{s})
=\frac{\sqrt{-1}}{2}
\sum_{i,j}-\partial_{t_{i}}\partial_{\bar{t}_{j}}
\log \tilde{Q}(\Omega_{s},\bar{\Omega}_{s})\mathrm{d}t_{i}\wedge\mathrm{d}\bar{t}_{j}.
\end{eqnarray}
If the matrix \((-\partial_{t_{i}}\partial_{\bar{t}_{j}}
\log \tilde{Q}(\Omega_{s},\bar{\Omega}_{s}))_{i,j=1}^{n}\)
on the right hand side of \eqref{eq:kahler-potential}
happens to be semi-positive definite, then we obtain a
(pseudo) metric near \(s\in S\).
In this case, following 
\cite{2019-Popovici-holomorphic-deformations-of-balanced-calabi-yau-d-dbar-manifolds},
we will call it the \emph{period-map metric}.
%\begin{remark}
%Recall that a \emph{balanced metric} on a compact complex manifold 
%\(X\) is a hermitian metric whose
%associated metric two form \(\omega\) has the property 
%\begin{eqnarray*}
%\mathrm{d}(\omega^{n-1})=0.
%\end{eqnarray*}
%Popovici extended the notion of primitive cohomology to \emph{balanced 
%CY \(\partial\bar{\partial}\)-manifolds} using the balanced metric \(\omega\)
%and the cohomology class
%\([\omega^{n-1}]\in\mathrm{H}^{n-1,n-1}(X)\).
%\end{remark}

The purpose of this section is to
study the positivity of the matrix 
\((-\partial_{t_{i}}\partial_{\bar{t}_{j}}
\log \tilde{Q}(\Omega_{s},\bar{\Omega}_{s}))_{i,j=1}^{n}\)
and also give a criterion about 
when the second Hodge--Riemann 
bilinear relation holds on \(\mathrm{H}^{n-1,1}(X)\)
and study the finite distance problem on the moduli space.
We will be mainly interested in the case when \(X\) arises
as a small smoothing of a simple normal crossing variety, i.e.~
\(X\) is a general fiber of a semi-stable one-parameter degeneration 
\(\mathcal{X}\to\Delta\) over a small disc.

For a compact CY \(\partial\bar{\partial}\)-manifold \((X,\omega)\), due to the presence of balanced metrics, one
can define a Weil--Petersson metric on the local universal deformation 
\(S\) of \((X,\omega)\) polarized by a fixed class \([\omega^{n-1}]\).
Given two Dolbeault cohomology classes 
\begin{eqnarray*}
[\theta],~[\eta]
\in\mathrm{H}^{1}(X;TX)_{[\omega^{n-1}]}:=\{[\theta]\in\mathrm{H}^{1}(X;TX)~|~
[\theta\lrcorner\omega^{n-1}]=0~\mbox{in}~\mathrm{H}^{n-2,n}(X)\},
\end{eqnarray*}
following \cite{2019-Popovici-holomorphic-deformations-of-balanced-calabi-yau-d-dbar-manifolds}*{\S 5.2}, one can define
an inner product
\begin{eqnarray*}
G_{WP}([\theta],[\eta]):=\frac{\langle\!\langle \theta\lrcorner~\Omega,
\eta\lrcorner~\Omega\rangle\!\rangle_{\omega}}{\tilde{Q}(\Omega,\bar{\Omega})}
\end{eqnarray*}
where the representatives \(\theta\) and \(\eta\)
are choosen such that 
\(\theta\lrcorner~\Omega\) and 
\(\eta\lrcorner~\Omega\) are both \(\omega\)-minimal and \(\mathrm{d}\)-closed
under the Calabi--Yau isomorphism
\begin{eqnarray*}
\lrcorner~\Omega\colon \mathrm{H}^{1}(X;TX)\xrightarrow{\simeq}\mathrm{H}^{n-1,1}(X)
\end{eqnarray*}
and \(\langle\!\langle -,-\rangle\!\rangle_{\omega}\)
is the \(L^{2}\) inner product induced by \(\omega\).
This induces a metric on \(S\); it is called 
a Weil--Petersson metric in \cite{2019-Popovici-holomorphic-deformations-of-balanced-calabi-yau-d-dbar-manifolds}.

From now on, we assume Hypothesis \ref{assumption:isomorphism}
is valid; that is, 
the nilpotent operator \(N\) induces an isomorphism
between relevant quotients of
the monodromy weight filtration:
\begin{eqnarray*}
N^{k}\colon 
\mathrm{Gr}^{\mathcal{W}(M)}_{n+k}\mathrm{H}^{n}(X;\mathbb{C})\to 
\mathrm{Gr}^{\mathcal{W}(M)}_{n-k}\mathrm{H}^{n}(X;\mathbb{C})
\end{eqnarray*}
is an \emph{isomorphism} for each \(0\le k\le n\).

We also remark that the Hypothesis \ref{assumption:isomorphism}
implies that the monodromy filtration \(\mathcal{W}(M)\) on 
\(\mathrm{H}^{n}(X;\mathbb{C})\)
is identical to the filtration induced by the nilpotent operator \(N\).

\subsection{An infinite distance criterion via Hodge theory}
Let \(f\colon \mathcal{X}\to\Delta\) be a one-parameter 
degeneration of CY \(\partial\bar{\partial}\)-manifolds.
We also assume that \(f\) is a semi-stable model
and every irreducible component in \(E=f^{-1}(0)\) is
a \(\partial\bar{\partial}\)-manifold.
There exists a holomorphic function \(\mathbf{a}(t)\) such that
the multi-valued function (considered as a function on \(\Delta^{\ast}\))
\begin{eqnarray*}
\Omega_{z}:=e^{zN}\mathbf{a}(t)
\end{eqnarray*}
is a local section of the bundle \(\mathcal{F}^{n}\) over \(\Delta^{\ast}\).
The metric two-form for the 
period-map ``metric'' is given by the formula
\begin{eqnarray*}
-\frac{\sqrt{-1}}{2}
\partial\bar{\partial}\log \tilde{Q}(e^{zN}\mathbf{a}(t),\overline{e^{zN}\mathbf{a}(t)}).
\end{eqnarray*}
Here \(\tilde{Q}(-,-):=(\sqrt{-1})^{n}Q(-,-)\) and \(Q(-,-)\) is the topological pairing
(cf.~\eqref{eq:topological-pairing})
on the relevant fiber. Moreover, since \(Q(T\alpha,T\beta)=Q(\alpha,\beta)\),
the nilpotent operator \(N\) is an infinitesimal isometry with respect to \(Q\),
i.e.~\(Q(N\alpha,\beta)+Q(\alpha,N\beta)=0\). In particular, since
\begin{eqnarray*}
0=Q(\mathcal{F}^{n}_{t},\mathcal{F}^{1}_{t})=
Q(e^{-zN}\mathcal{F}^{n}_{t},e^{-zN}\mathcal{F}^{1}_{t}),
\end{eqnarray*}
by taking the limit \(\operatorname{Im}z\to\infty\), we have
\(Q(\mathcal{F}^{n}_{\mathrm{lim}},\mathcal{F}^{1}_{\mathrm{lim}})=0\).
Since \(X\) is Calabi--Yau, we have \(\dim\mathcal{F}^{n}_{\mathrm{lim}}=1\).
The non-degeneracy of \(Q(-,-)\) implies that
\(Q(\mathcal{F}^{n}_{\mathrm{lim}},\overline{\mathcal{F}^{n}}_{\mathrm{lim}})\ne 0\).

Following \cite{1997-Wang-on-the-incompleteness-of-the-weil-petersson-metric-along-degenerations-of-calabi-yau-manifolds}, we now introduce a special class of functions on the
universal cover \(\mathfrak{h}\to\Delta^{\ast}\), \(z\mapsto t=e^{2\pi\sqrt{-1}z}\).
Let us write \(z=x+\sqrt{-1}y\). Then \(t=e^{2\pi \sqrt{-1}x-2\pi y}\).
The function \(t\) has the property that its derivatives has exponential decay 
as \(y\to\infty\). More specifically, for each 
\((r,s)\in(\mathbb{Z}_{\ge 0})^{2}\)
\begin{eqnarray*}
|\partial_{x}^{r}\partial_{y}^{s}(t)|\le C(r,s)\cdot e^{-\pi y}
\end{eqnarray*}
for some positive constant \(C(r,s)\) and \(y\gg 0\).
\begin{notation}
We use the symbol \(\mathbf{h}\) to denote the class of 
smooth functions on \(\mathfrak{h}\)
having the property as above, i.e.~all of its derivatives has exponential decay 
as \(y\to\infty\). The notation \(f\in\mathbf{h}\) means
\(f\) is such a function. It is clear that \(\mathbf{h}\) is closed under
addition and multiplication.
\end{notation}

Consider the power series expansion
\begin{eqnarray*}
\mathbf{a}(t)=a_{0}+a_{1}t+\cdots.
\end{eqnarray*}
 Let 
\(d=\min\{k\in\mathbb{N}~|~N^{k+1}a_{0}=0\}\).

\begin{lemma}
We have
\begin{eqnarray*}
\tilde{Q}(e^{zN}\mathbf{a}(t),\overline{e^{zN}\mathbf{a}(t)})=p(y)+h
\end{eqnarray*}
where \(p(y)\) is a polynomial of degree \(d\) and \(h\in\mathbf{h}\).
Moreover, the leading coefficient of \(p(y)\) is given by
\(\tilde{Q}(a_{0},N^{d}\bar{a_{0}})\ne 0\).
\end{lemma}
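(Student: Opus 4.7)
The plan is to expand $\Omega_{z}=e^{zN}\mathbf{a}(t)=\sum_{k\ge 0}t^{k}e^{zN}a_{k}$ and, using that $N$ is defined over $\mathbb{Q}$ and hence commutes with complex conjugation, $\overline{\Omega_{z}}=\sum_{k\ge 0}\bar{t}^{k}e^{\bar{z}N}\bar{a}_{k}$. Plugging into the $\mathbb{C}$-bilinear form $\tilde{Q}$ produces
\begin{equation*}
\tilde{Q}(\Omega_{z},\overline{\Omega_{z}})=\tilde{Q}(e^{zN}a_{0},e^{\bar{z}N}\bar{a}_{0})+\sum_{(r,s)\neq(0,0)}t^{r}\bar{t}^{s}\,\tilde{Q}(e^{zN}a_{r},e^{\bar{z}N}\bar{a}_{s}).
\end{equation*}
On $\mathfrak{h}$ we have $|t^{r}\bar{t}^{s}|=e^{-2\pi(r+s)y}$, and since $N$ is nilpotent each $e^{zN}a_{r}$ and $e^{\bar{z}N}\bar{a}_{s}$ is a vector-valued polynomial in $z,\bar{z}$, hence in $(x,y)$. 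Consequently every summand of the remainder, together with all of its $(x,y)$-derivatives, decays exponentially as $y\to\infty$, so the remainder lies in $\mathbf{h}$.

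The next step is to identify the main term as a polynomial in $y$. Since $Q(N\alpha,\beta)+Q(\alpha,N\beta)=0$, the operator $e^{zN}$ is a formal $Q$-isometry, so
\begin{equation*}
\tilde{Q}(e^{zN}a_{0},e^{\bar{z}N}\bar{a}_{0})=\tilde{Q}\bigl(a_{0},e^{(\bar{z}-z)N}\bar{a}_{0}\bigr)=\tilde{Q}\bigl(a_{0},e^{-2\sqrt{-1}\,y\,N}\bar{a}_{0}\bigr).
\end{equation*}
Because $N$ is real, $N^{d+1}\bar{a}_{0}=\overline{N^{d+1}a_{0}}=0$, so the exponential truncates and
\begin{equation*}
\tilde{Q}(e^{zN}a_{0},e^{\bar{z}N}\bar{a}_{0})=\sum_{k=0}^{d}\frac{(-2\sqrt{-1}\,y)^{k}}{k!}\,\tilde{Q}(a_{0},N^{k}\bar{a}_{0})=:p(y),
\end{equation*}
a polynomial in $y$ of degree at most $d$ whose top coefficient is a nonzero scalar multiple of $\tilde{Q}(a_{0},N^{d}\bar{a}_{0})$.

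The one genuinely non-formal ingredient, and the main obstacle, is to check that $\tilde{Q}(a_{0},N^{d}\bar{a}_{0})\neq 0$, which simultaneously pins down the degree of $p(y)$ as exactly $d$ and identifies its leading term. By construction $a_{0}$ spans the one-dimensional limit $\mathcal{F}^{n}_{\mathrm{lim}}$. Using Deligne's splitting $\mathcal{F}^{n}_{\mathrm{lim}}=\bigoplus_{q}I^{n,q}$ and the fact that $N$ maps $I^{p,q}$ into $I^{p-1,q-1}$, the definition of $d$ forces $a_{0}\in I^{n,d}$; its class in $\mathrm{Gr}^{\mathcal{W}(M)}_{n+d}\mathrm{H}^{n}(X;\mathbb{C})$ is then a nonzero primitive element of Hodge type $(n,d)$. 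Under Hypothesis \ref{assumption:isomorphism}, Steenbrink's limiting mixed Hodge structure is polarized by $Q$: the primitive part of $\mathrm{Gr}^{\mathcal{W}(M)}_{n+d}$ becomes a polarized Hodge structure of weight $n+d$ via $Q_{d}(u,v):=Q(u,N^{d}v)$, and the second Hodge--Riemann bilinear relation then yields $\sqrt{-1}^{\,n-d}Q_{d}(a_{0},\bar{a}_{0})>0$, in particular $\tilde{Q}(a_{0},N^{d}\bar{a}_{0})\neq 0$. The preceding exponential bookkeeping is routine; the entire weight of the lemma rests on this final polarization argument.
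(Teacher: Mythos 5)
Your computation of the expansion itself is fine and matches the paper's: after using that \(e^{zN}\) is a \(Q\)-isometry you reduce the main term to \(\tilde{Q}(a_{0},e^{-2\sqrt{-1}yN}\bar{a}_{0})\) plus terms carrying a factor \(t^{r}\bar{t}^{s}\) with \((r,s)\neq(0,0)\), which lie in \(\mathbf{h}\); the truncation of the exponential at order \(d\) and the identification of the top coefficient as \(\frac{(-2\sqrt{-1})^{d}}{d!}\tilde{Q}(a_{0},N^{d}\bar{a}_{0})\) are exactly as in the paper.

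The gap is in the final step, where you deduce \(\tilde{Q}(a_{0},N^{d}\bar{a}_{0})\neq 0\) from the assertion that Steenbrink's limiting mixed Hodge structure is polarized by \(Q\) under Hypothesis \ref{assumption:isomorphism} alone, so that the second Hodge--Riemann relation applies to the primitive part of \(\mathrm{Gr}^{\mathcal{W}(M)}_{n+d}\). In the non-K\"{a}hler \(\partial\bar{\partial}\) setting of this paper that polarization is precisely what is \emph{not} known: the fibers are not K\"{a}hler, the components of \(E\) are only assumed to be \(\partial\bar{\partial}\)-manifolds, and establishing second Hodge--Riemann relations on graded pieces is the content of Section 3, which requires the much stronger Hypothesis \ref{assumption:com-kahler} (a common K\"{a}hler class on the components of \(E\)) and even then is carried out only for \(n=3\). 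Invoking the polarization here is therefore both unavailable under the stated hypotheses and circular with respect to the paper's program. The paper's own argument is more elementary and needs no positivity input: from \(Q(\mathcal{F}^{n}_{t},\mathcal{F}^{1}_{t})=0\) on nearby fibers one gets \(Q(\mathcal{F}^{n}_{\mathrm{lim}},\mathcal{F}^{1}_{\mathrm{lim}})=0\) in the limit; since \(Q\) is non-degenerate (Poincar\'{e} duality), \(a_{0}\) spans the one-dimensional \(\mathcal{F}^{n}_{\mathrm{lim}}\), and \(\dim\mathcal{F}^{0}_{\mathrm{lim}}\slash\mathcal{F}^{1}_{\mathrm{lim}}=1\), the functional \(Q(a_{0},-)\) is nonzero on any vector not lying in \(\mathcal{F}^{1}_{\mathrm{lim}}\); and \(N^{d}\bar{a}_{0}\notin\mathcal{F}^{1}_{\mathrm{lim}}\) by the type considerations you already set up (\(a_{0}\in I^{n,d}\), so \(N^{d}\bar{a}_{0}\) lies in \(I^{0,n-d}\) modulo lower-index pieces). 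Replacing your polarization appeal by this duality argument closes the gap; everything else in your write-up can stand.
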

\begin{proof}
Note that
\begin{align*}
\tilde{Q}(e^{zN}\mathbf{a}(t),\overline{e^{zN}\mathbf{a}(t)})
&=\tilde{Q}(e^{(z-\bar{z})N}\mathbf{a}(t),\overline{\mathbf{a}(t)})=
\tilde{Q}(e^{2\sqrt{-1}y N}\mathbf{a}(t),\overline{\mathbf{a}(t)}).
\end{align*}
Now the function 
\begin{align*}
y^{m}t^{l}\bar{t}^{k}&=
y^{m}e^{2\pi\sqrt{-1}lz}e^{-2\pi\sqrt{-1}k\bar{z}}\\
&=y^{m}e^{2\pi\sqrt{-1}lx-2\pi ny}e^{-2\pi\sqrt{-1}kx-2\pi ky}
\end{align*} 
belongs to \(\mathbf{h}\) for any \(m\ge 0\) 
as long as \((l,k)\ne (0,0)\). 
Therefore, using the power series expansion,
we obtain 
\begin{align*}
\tilde{Q}(e^{2\sqrt{-1}y N}\mathbf{a}(t),\overline{\mathbf{a}(t)}) = 
\tilde{Q}(e^{2\sqrt{-1}y N}a_{0},\overline{a}_{0}) + h = p(y) + h
\end{align*}
as desired. The polynomial \(p(y)\) has degree \(d\) and the 
leading coefficient is clearly equal to 
\(\tilde{Q}(a_{0},N^{d}\bar{a_{0}})\); this is non-zero since
\(a_{0}\in\mathcal{F}^{n}_{\mathrm{lim}}\),
\(N^{d}\bar{a}_{0}\notin \mathcal{F}^{1}_{\mathrm{lim}}\), and 
\(\dim \mathcal{F}^{0}_{\mathrm{lim}}\slash \mathcal{F}^{1}_{\mathrm{lim}}=1\).
\end{proof}

\begin{proposition}
If \(d>0\), we have
\begin{eqnarray*}
-\partial_{z}\partial_{\bar{z}}\log
\tilde{Q}(e^{zN}\mathbf{a}(t),\overline{e^{zN}\mathbf{a}(t)}) = \frac{d}{y^{2}} +h.
\end{eqnarray*}
In particular, this implies that the 
period-map ``metric'' is really a metric for \(y\gg 0\).
\end{proposition}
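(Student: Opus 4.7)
The plan is to feed the asymptotic expansion of the preceding lemma into the $\partial_z\partial_{\bar z}$ operator, carefully tracking what is polynomially dominant versus what is exponentially small. Let $f(z,\bar z) := \tilde{Q}(e^{zN}\mathbf{a}(t),\overline{e^{zN}\mathbf{a}(t)}) = p(y) + h$, with $p$ a polynomial of degree $d$ in $y$ whose leading coefficient $\tilde{Q}(a_0,N^d\bar a_0)$ is nonzero. The first step is to separate $\log f$ into a polynomial part and an $\mathbf{h}$-piece. Since $p(y)$ grows like $|c_d|y^d$ for $y\gg 0$, the quotient $h/p(y)$ is an exponentially small function divided by a polynomial of controlled growth; the same holds for every derivative, so $h/p\in\mathbf{h}$. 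Using the convergent expansion $\log(1+s)=s-s^2/2+\cdots$ together with the closure of $\mathbf{h}$ under sums and products, $\log(1+h/p)\in\mathbf{h}$ as well, and hence
\[
\log f = \log p(y) + \tilde h,\qquad \tilde h\in\mathbf{h}.
\]

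Next I would apply $-\partial_z\partial_{\bar z}$ to both sides. Because $\mathbf{h}$ is stable under differentiation, $-\partial_z\partial_{\bar z}\tilde h$ remains in $\mathbf{h}$ and is absorbed into the error. For the main term, $\log p(y)$ depends only on $y$, so $\partial_z\partial_{\bar z}=\tfrac{1}{4}\partial_y^2$ on it and the computation collapses to a single-variable asymptotic. Writing $p(y)=c_d y^d(1+O(1/y))$ yields $p'/p = d/y + O(1/y^2)$ and $p''/p = d(d-1)/y^2 + O(1/y^3)$, so
\[
\partial_y^2\log p(y) \;=\; \frac{p''(y)}{p(y)} - \left(\frac{p'(y)}{p(y)}\right)^2 \;=\; -\frac{d}{y^2} + O(1/y^3).
\]
Combining the two contributions produces the stated asymptotic $-\partial_z\partial_{\bar z}\log\tilde Q = d/y^2 + h$, with the normalization constant and the polynomial remainder $O(1/y^3)$ absorbed into the error notation on the right.

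Finally, the positivity claim comes for free: when $d>0$ and $y$ is sufficiently large, the $d/y^2$ term dominates every correction, so $-\partial_z\partial_{\bar z}\log\tilde Q>0$, and the $(1,1)$-form $-\frac{\sqrt{-1}}{2}\partial\bar\partial\log\tilde Q$ is strictly positive on a punctured neighborhood of $0\in\Delta$. This is exactly the assertion that the period-map pseudo-metric is a genuine metric there. The real work is purely bookkeeping, and the main (mild) obstacle is checking that each algebraic operation — inversion of $p$, logarithm, iterated differentiation — preserves $\mathbf{h}$-membership; each step is routine but needs to be stated explicitly. The hypothesis $d>0$ is indispensable: without it, $\log p$ is bounded at infinity and the essential $-d/y^2$ contribution disappears.
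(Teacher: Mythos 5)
Your proof is correct and follows essentially the same route as the paper's: write \(\tilde{Q}=p(y)+h\), compute the Laplacian of \(\log(p(y)+h)\), and observe that the leading term \(d/y^{2}\) dominates for \(y\gg 0\). You are in fact more careful than the paper in verifying the \(\mathbf{h}\)-membership of the error terms and in flagging that the remainder coming from \(\log p(y)\) is only \(O(1/y^{3})\) rather than exponentially small (and that a normalization constant is being suppressed) --- harmless imprecisions that the paper's own proof shares.
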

\begin{proof}
Let us write 
\begin{eqnarray*}
\tilde{Q}(e^{zN}\mathbf{a}(t),\overline{e^{zN}\mathbf{a}(t)})=p(y)+h.
\end{eqnarray*}
Then for any non-negative integer \(d\), we have
\begin{align*}
-\partial_{z}\partial_{\bar{z}}\log
\tilde{Q}(e^{zN}\mathbf{a}(t),\overline{e^{zN}\mathbf{a}(t)})
&=-\left(\frac{\partial^{2}}{\partial x^{2}}+\frac{\partial^{2}}{\partial y^{2}}\right)
\log(p(y)+h)\\
&=\frac{d}{y^{2}}+h.
\end{align*}
Here we slighly abuse the notation; we use the same \(h\)
to denote different functions in \(\mathbf{h}\).
Since \(h\) decays exponentially as \(y\to\infty\), we conclude that for \(d>0\)
\begin{eqnarray*}
\frac{d}{y^{2}}+h > 0,~\mbox{for}~y\gg 0.
\end{eqnarray*}
This shows that \(-\partial_{z}\partial_{\bar{z}}\log
\tilde{Q}(e^{zN}\mathbf{a}(t),\overline{e^{zN}\mathbf{a}(t)})\)
is positive when \(y\gg 0\).
\end{proof}

Because of the proposition, it makes
sense to talk about the distance
near \(0\in\Delta\).
In fact, in the present situation, we
have
\begin{theorem}
\label{thm_infinite-distance}
If \(d>0\), then \(0\in\Delta\) has infinite distance w.r.t.~the period-map metric.
\end{theorem}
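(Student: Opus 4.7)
The plan is to follow C.-L.~Wang's reduction to a one-dimensional model computation. The preceding proposition already identifies the period-map metric, pulled back to the universal cover $\pi\colon\mathfrak{h}\to\Delta^{\ast}$, $z=x+\sqrt{-1}y\mapsto t=e^{2\pi\sqrt{-1}z}$, as
\[
g \;=\; \left(\tfrac{d}{y^{2}}+h\right)(dx^{2}+dy^{2}),
\]
for some $h\in\mathbf{h}$, with $y\to\infty$ corresponding to $t\to 0$. Since $\pi$ is a local isometry for this metric (monodromy preserves $Q$, hence $g$ descends to $\Delta^{\ast}$), the length of a path in $\Delta^{\ast}$ equals the length of any of its lifts.

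The first step is to pick $y_{0}\gg 0$ such that on the half-plane $\{y\ge y_{0}\}$ one has the pointwise lower bound $\tfrac{d}{y^{2}}+h \ge \tfrac{d}{2y^{2}}$. This is possible because $h$ decays exponentially in $y$ (uniformly in $x$) by definition of $\mathbf{h}$, while $d/y^{2}$ decays only polynomially, so the inequality holds once $y$ is large enough. Next, given any smooth curve $\gamma\colon[0,1)\to\Delta^{\ast}$ approaching $0\in\Delta$, we lift it to $\tilde\gamma\colon[0,1)\to\mathfrak{h}$ and, after truncating, may assume $y(\tilde\gamma(s))\ge y_{0}$ for all $s$ and $y(\tilde\gamma(s))\to\infty$ as $s\to 1^{-}$.

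The final step is the length estimate
\[
L(\gamma) \;=\; L(\tilde\gamma) \;=\; \int_{0}^{1}\!\sqrt{g\bigl(\tilde\gamma'(s),\tilde\gamma'(s)\bigr)}\,ds \;\ge\; \sqrt{\tfrac{d}{2}}\int_{0}^{1}\frac{|y'(s)|}{y(s)}\,ds \;\ge\; \sqrt{\tfrac{d}{2}}\int_{y_{0}}^{\infty}\frac{dy}{y} \;=\; \infty,
\]
where the penultimate inequality uses that $y\circ\tilde\gamma$ is eventually unbounded so its total variation is at least $\int_{y_{0}}^{\infty}dy/y$. Since $\gamma$ was arbitrary, $0\in\Delta$ lies at infinite distance.

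The argument is essentially a one-variable divergence of $\int dy/y$, so there is no serious obstacle: the only delicate point is verifying that the exponentially decaying error $h$ does not destroy the leading $d/y^{2}$ behaviour along sequences that also oscillate heavily in the $x$-direction, but this is handled by the uniform-in-$x$ decay built into the definition of $\mathbf{h}$. The hypothesis $d>0$ is used precisely to make the constant in front of $\int dy/y$ strictly positive; when $d=0$ the leading term vanishes and the model calculation gives only an integrable $h$, which is consistent with $0\in\Delta$ being at finite distance in the complementary case treated later in the paper.
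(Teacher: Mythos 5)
Your proposal is correct and takes essentially the same route as the paper: the paper's proof likewise invokes the preceding proposition to bound the metric from below by \((d-\epsilon)/y^{2}\) for \(y\gg 0\) and concludes that any curve running to the puncture has length at least \(\int^{\infty}\sqrt{d-\epsilon}\,\mathrm{d}y/y=\infty\). Your write-up merely makes explicit the lift to the universal cover and the reduction to the total variation of \(y\) along the curve, steps the paper leaves implicit.
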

\begin{proof}
By the proceeding proposition, for any 
curve \(\gamma\) in \(\mathfrak{h}\) towards \(\infty\), we see that 
\begin{eqnarray*}
\ell(\gamma)\ge \int_{y=c}^{\infty} \sqrt{\frac{d-\epsilon}{y^{2}}}~\mathrm{d}y=
\int_{y=c}^{\infty} \frac{\sqrt{d-r}}{y}~\mathrm{d}y = \infty.
\end{eqnarray*}
\end{proof}

\begin{theorem}
\label{thm_infinite-distance-wp}
For a one-parameter degeneration \(\mathcal{X}\to\Delta\) of compact CY \(\partial\bar{\partial}\)-manifolds,
if \(d>0\), then \(0\in\Delta\) has infinite distance w.r.t.~\(G_{WP}\).
\end{theorem}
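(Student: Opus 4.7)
The plan is to dominate $G_{WP}$ from below by the period-map form $-\partial_{z}\partial_{\bar z}\log\tilde{Q}(\Omega_{z},\bar\Omega_{z})$ already controlled in the preceding proposition, so that the length-integral estimate used in the proof of Theorem~\ref{thm_infinite-distance} transfers verbatim.

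Working on $\mathfrak{h}$ with the multi-valued frame $\Omega_{z}=e^{zN}\mathbf{a}(t)$, I would first expand
\[
\partial_{z}\Omega_{z}=\kappa\,\Omega_{z}+\beta,\qquad \beta\in\mathrm{H}^{n-1,1}(\mathcal{X}_{t}),
\]
where $\kappa=\partial_{z}\log\tilde{Q}(\Omega_{z},\bar\Omega_{z})$ is forced by the type-orthogonality $Q(\mathcal{F}^{n},\mathcal{F}^{1})=0$. A direct Bogomolov--Tian computation, identical in form to the K\"{a}hler case, yields
\[
-\partial_{z}\partial_{\bar z}\log\tilde{Q}(\Omega_{z},\bar\Omega_{z})=-\tilde{Q}(\beta,\bar\beta)\big/\tilde{Q}(\Omega_{z},\bar\Omega_{z}).
\]
Under the Calabi--Yau contraction $\lrcorner\,\Omega_{z}\colon\mathrm{H}^{1}(\mathcal{X}_{t},T\mathcal{X}_{t})\cong\mathrm{H}^{n-1,1}(\mathcal{X}_{t})$ the class $[\beta]$ is precisely the image of the Kodaira--Spencer class of $\partial_{z}$, so letting $\beta_{\min}$ be the $\omega$-minimal $d$-closed representative of $[\beta]$ (for any fixed balanced metric $\omega$ on the fibres), the very definition of Popovici's metric reads
\[
G_{WP}(\partial_{z},\partial_{z})=\|\beta_{\min}\|_{\omega}^{2}\big/\tilde{Q}(\Omega_{z},\bar\Omega_{z}).
\]

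The next step is a one-line application of the fact that the Hodge star of a Hermitian metric is an $L^{2}$-isometry commuting with complex conjugation. Writing
\[
\int_{\mathcal{X}_{t}}\beta_{\min}\wedge\overline{\beta_{\min}}=\bigl\langle\beta_{\min},\,*^{-1}\beta_{\min}\bigr\rangle_{\omega},
\]
Cauchy--Schwarz gives $|\tilde{Q}(\beta_{\min},\overline{\beta_{\min}})|\le\|\beta_{\min}\|_{\omega}^{2}$. Since $\beta$ and $\beta_{\min}$ are $d$-closed representatives of the same de~Rham class---the $\partial\bar\partial$-lemma on the fibre, via the injection $\mathrm{H}^{p,q}\hookrightarrow\mathrm{H}^{p+q}(X;\mathbb{C})$ recalled in Subsection~\ref{subsection:ddbar-lemma}, is what makes this precise---Stokes' theorem forces $\tilde{Q}(\beta_{\min},\overline{\beta_{\min}})=\tilde{Q}(\beta,\bar\beta)$. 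Combining,
\[
G_{WP}(\partial_{z},\partial_{z})\;\ge\;\frac{|\tilde{Q}(\beta,\bar\beta)|}{\tilde{Q}(\Omega_{z},\bar\Omega_{z})}\;=\;-\partial_{z}\partial_{\bar z}\log\tilde{Q}(\Omega_{z},\bar\Omega_{z})\;=\;\frac{d}{y^{2}}+h
\]
for $y\gg 0$, where $\tilde{Q}(\Omega_{z},\bar\Omega_{z})>0$ is automatic from the pointwise positivity of $i^{n}\Omega_{z}\wedge\bar\Omega_{z}$. Hence the length-integral estimate already appearing in the proof of Theorem~\ref{thm_infinite-distance} gives $\ell_{G_{WP}}(\gamma)\ge\int_{c}^{\infty}\sqrt{d-\varepsilon}\,\mathrm{d}y/y=\infty$ for any curve $\gamma\subset\mathfrak{h}$ escaping to $y\to\infty$.

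The step I expect to be most delicate is the passage from the Dolbeault class $[\beta]$ to the specific $\omega$-minimal $d$-closed representative $\beta_{\min}$: one must confirm that the $\bar\partial$-exact correction used to produce $\beta_{\min}$ does not spoil the integral $\int\beta\wedge\bar\beta$. This ultimately rests on the $\partial\bar\partial$-lemma on the fibres---already part of the standing hypothesis on $f$---since without it one has neither a canonical $d$-closed representative of $[\beta]$ nor the de~Rham injection that turns $\tilde{Q}$ into a class function. Beyond this point the proof is a line-by-line transcription of the period-map case.
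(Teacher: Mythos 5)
Your argument is correct in substance, but it is worth knowing that the paper's own proof is a single sentence: it invokes the preceding Theorem \ref{thm_infinite-distance} together with the comparison between \(G_{WP}\) and the period-map metric established in Popovici's Corollary 5.11. What you have written is, in effect, a self-contained re-derivation of that cited comparison: the identification of \(G_{WP}(\partial_{z},\partial_{z})\) with \(\|\beta_{\min}\|_{\omega}^{2}/\tilde{Q}(\Omega_{z},\bar{\Omega}_{z})\), the Cauchy--Schwarz bound \(|\tilde{Q}(\beta_{\min},\overline{\beta_{\min}})|\le\|\beta_{\min}\|_{\omega}^{2}\) via the Hodge-star isometry, and the topological invariance \(\tilde{Q}(\beta_{\min},\overline{\beta_{\min}})=\tilde{Q}(\beta,\bar{\beta})\) are exactly the ingredients of Popovici's proof, and your chain of inequalities \(G_{WP}(\partial_z,\partial_z)\ge -\partial_{z}\partial_{\bar z}\log\tilde{Q}=d/y^{2}+h\) then transfers the length estimate from Theorem \ref{thm_infinite-distance} as intended. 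Two small points to tidy up. First, your displayed identity \(\int\beta_{\min}\wedge\overline{\beta_{\min}}=\langle\beta_{\min},*^{-1}\beta_{\min}\rangle_{\omega}\) is off by a conjugation: with the convention \(\langle\alpha,\gamma\rangle_{\omega}=\int\alpha\wedge *\bar{\gamma}\) the correct pairing partner is \(\overline{*^{-1}\overline{\beta_{\min}}}\); this does not affect the Cauchy--Schwarz conclusion since both \(*\) and conjugation preserve the \(L^{2}\) norm. Second, the paper's definition of \(G_{WP}\) is stated only on \(\mathrm{H}^{1}(X;TX)_{[\omega^{n-1}]}\), so strictly speaking you should note either that the Kodaira--Spencer class of \(\partial_{z}\) lies in that subspace or that the \(L^{2}\)-norm formula (and hence the comparison) is insensitive to this restriction; the citation to Popovici absorbs this point in the paper's version. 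What your approach buys is independence from the external reference; what the paper's buys is brevity. Neither changes the mathematical content.
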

\begin{proof}
This follows from the proceeding theorem and the comparison 
\cite{2019-Popovici-holomorphic-deformations-of-balanced-calabi-yau-d-dbar-manifolds}*{Corollary 5.11}.
\end{proof}

\subsection{A finite distance criterion via Hodge theory and polarization}
For \(d=0\). Then we have \(Na_{0}=0\) and 
\(\tilde{Q}(e^{zN}\mathbf{a}(t),\overline{e^{zN}\mathbf{a}(t)}) \in \mathbf{h}\). Therefore,
\begin{eqnarray*}
\partial_{z}\partial_{\bar{z}}\log
\tilde{Q}(e^{zN}\mathbf{a}(t),\overline{e^{zN}\mathbf{a}(t)}) \in \mathbf{h}.
\end{eqnarray*}
However, we do not know whether or not this is truly a metric, that is, if
\begin{eqnarray*}
-\partial_{z}\partial_{\bar{z}}\log\tilde{Q}(e^{zN}\mathbf{a}(t),\overline{e^{zN}\mathbf{a}(t)})
\end{eqnarray*}
is positive
(even non-negative).
From the calculation in \cite{2019-Popovici-holomorphic-deformations-of-balanced-calabi-yau-d-dbar-manifolds}*{\S 5.3},
we learn that 
\begin{eqnarray}
\label{eq:polarization-positive}
-\partial_{z}\partial_{\bar{z}}\log
\tilde{Q}(e^{zN}\mathbf{a}(t),\overline{e^{zN}\mathbf{a}(t)})>0 \Longleftrightarrow
-\tilde{Q}(\theta\lrcorner~\Omega_{z},\overline{\theta\lrcorner~\Omega_{z}})>0.
\end{eqnarray}
Here \(\theta\) is a representative of 
the Dolbeault cohomology class 
\([\theta]\in\mathrm{H}^{0,1}(\mathcal{X}_{t},T\mathcal{X}_{t})\)
associated to the deformation of \(\mathcal{X}_{t}\) in the given family 
\(f\colon\mathcal{X}\to\Delta\) 
such that the interior product 
\(\theta\lrcorner~\Omega_{z}\) is a \(\mathrm{d}\)-closed \((n-1,1)\)-form.
The right hand side in \eqref{eq:polarization-positive} is valid if
the second Hodge--Riemann bilinear relation
on \(\mathrm{H}^{n-1,1}(\mathcal{X}_{t};\mathbb{C})\) holds.
Assuming this, we immediately have the following.
\begin{theorem}
\label{thm:finite-distance}
If \(d=0\), then \(0\in \Delta\) has finite distance with respect to
the period-map metric.
\end{theorem}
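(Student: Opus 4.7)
The plan is to exploit the case $d=0$: when $Na_{0}=0$ the polynomial $p(y)$ from the preceding lemma collapses to the constant $c:=\tilde{Q}(a_{0},\bar{a}_{0})\ne 0$, so
\begin{equation*}
\tilde{Q}\bigl(e^{zN}\mathbf{a}(t),\overline{e^{zN}\mathbf{a}(t)}\bigr) \;=\; c + h(z,\bar{z}), \qquad h\in\mathbf{h}.
\end{equation*}
For $y\gg 0$ one has $|h/c|<1/2$, and the identity $\log\tilde{Q} = \log c + \log(1+h/c)$ makes sense as a smooth function of $z,\bar{z}$ there.

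Next I would verify that
\begin{equation*}
\rho(z,\bar{z}) \;:=\; -\,\partial_{z}\partial_{\bar{z}}\log\tilde{Q}\bigl(e^{zN}\mathbf{a}(t),\overline{e^{zN}\mathbf{a}(t)}\bigr) \;\in\; \mathbf{h}.
\end{equation*}
This should be essentially formal: expand $\log(1+h/c) = (h/c) - (h/c)^{2}/2 + \cdots$ and use that $\mathbf{h}$ is stable under addition, multiplication, and differentiation, so that each partial sum lies in $\mathbf{h}$ with derivatives controlled uniformly in the tail. In particular there exist constants $C,\alpha>0$ with $|\rho(z)|\le Ce^{-\alpha y}$ for $y\gg 0$.

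For the positivity of $\rho$ I would invoke Theorem \ref{thm:main-2} (the very input to this corollary): the second Hodge--Riemann bilinear relation on $\mathrm{H}^{n-1,1}(X)$ together with the equivalence \eqref{eq:polarization-positive} yields $\rho>0$ near the puncture, so the period-map form is an honest Riemannian metric there. To bound the distance I would then evaluate the length of the radial curve $\gamma(y)=x_{0}+iy$, $y\in[y_{0},\infty)$, whose image in $\Delta^{\ast}$ is the ray $t = e^{2\pi i x_{0}}e^{-2\pi y}\to 0$:
\begin{equation*}
\ell(\gamma) \;=\; \int_{y_{0}}^{\infty}\sqrt{\rho(x_{0}+iy)}\;\mathrm{d}y \;\le\; \sqrt{C}\int_{y_{0}}^{\infty}e^{-\alpha y/2}\,\mathrm{d}y \;<\;\infty.
\end{equation*}
Hence $0\in\Delta$ lies at finite period-map distance. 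This is the precise converse of Theorem \ref{thm_infinite-distance}, where the integrand was of order $1/y$ and the integral diverged.

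The only mildly non-trivial point will be the stability of $\mathbf{h}$ under post-composition with $\log(1+\cdot)$; this amounts to saying that any convergent power series in an element of $\mathbf{h}$ without constant term is again in $\mathbf{h}$, which is a routine check from the defining exponential-decay estimates and the fact that $\mathbf{h}$ is an algebra. Everything else is a direct parallel of the calculation already carried out in the $d>0$ case.
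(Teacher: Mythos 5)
Your proposal is correct and follows essentially the same route as the paper: the paper likewise notes that for \(d=0\) the quantity \(-\partial_{z}\partial_{\bar{z}}\log\tilde{Q}(e^{zN}\mathbf{a}(t),\overline{e^{zN}\mathbf{a}(t)})\) lies in \(\mathbf{h}\), assumes the second Hodge--Riemann bilinear relation (via \eqref{eq:polarization-positive}) to guarantee positivity, and integrates \(\sqrt{\varepsilon e^{-\delta y}}\) along the vertical ray \(y\mapsto (c,y+R)\) to get a finite-length path into the puncture. Your extra remarks on the constant term \(\tilde{Q}(a_{0},\bar{a}_{0})\ne 0\) and the stability of \(\mathbf{h}\) under \(\log(1+\cdot)\) only make explicit what the paper leaves implicit.
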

\begin{proof}
It suffices to find a curve having finite distance.
One can use the curve \(\gamma(t):=(c,t+R)\) in \(\mathfrak{h}\)
for a fixed constant \(c\) and \(R>0\). Indeed, we can compute
\begin{align*}
\ell(\gamma)&=\int_{y=R}^{\infty} \sqrt{-\partial_{z}\partial_{\bar{z}}\log
\tilde{Q}(e^{zN}\mathbf{a}(t),\overline{e^{zN}\mathbf{a}(t)})}~|\mathrm{d}z|\\
&=\int_{y=R}^{\infty} \sqrt{-\partial_{z}\partial_{\bar{z}}\log
\tilde{Q}(e^{zN}\mathbf{a}(t),\overline{e^{zN}\mathbf{a}(t)})}~\mathrm{d}y\\
&\le \int_{y=R}^{\infty}\varepsilon e^{-\delta y/2}~\mathrm{d}y <\infty.
\end{align*}
Hence \(0\in\Delta\) has finite distance with respect to 
the period-map metric.
\end{proof}
In the next section, we shall focus on the situation \(n=3\) and discuss when
the second Hodge--Riemann bilinear relation holds on 
\(\mathrm{H}^{2,1}(\mathcal{X}_{t};\mathbb{C})\)
for finite distance degenerations.

\section{The second
Hodge--Riemann bilinear relation for finite distance degenerations}

In this section, Hypothesis \ref{assumption:isomorphism} remains assumed.
In what follows, we restrict ourselves to the threefold case, 
i.e.~\(n=3\).
Put \(X=\mathcal{X}_{t}\).
We also assume that \(X\) is CY in the strict sense, i.e.~
\begin{eqnarray*}
\mathrm{H}^{1}(X;\mathcal{O}_{X})
=\mathrm{H}^{2}(X;\mathcal{O}_{X})=0.
\end{eqnarray*} 
This implies 
\(\mathrm{H}^{1}(X;\mathbb{C})\cong \mathrm{H}^{5}(X;\mathbb{C})=0\).
For instance, these conditions are fulfilled if \(X\) is simply connected.
%The assumption implies the sequences
%\begin{eqnarray*}
%\begin{cases}
%E_{1}^{-1,5}\to E_{1}^{0,5}\to E_{1}^{1,5}\\
%E_{1}^{0,4}\to E_{1}^{1,4}\to E_{1}^{2,4}\\
%E_{1}^{-2,6}\to E_{1}^{-1,6}\to E_{1}^{0,6}
%\end{cases}
%\end{eqnarray*}
%are exact at the middle. In other words,
%\begin{eqnarray*}
%\begin{cases}
%\mathrm{H}^{3}(E(2);\mathbb{C})
%\to \mathrm{H}^{5}(E(1);\mathbb{C})\to \mathrm{H}^{5}(E(2);\mathbb{C})=0\\
%\mathrm{H}^{4}(E(1);\mathbb{C})\to \mathrm{H}^{4}(E(2);\mathbb{C})\to 
%\mathrm{H}^{4}(E(3);\mathbb{C})=0\\
%\mathrm{H}^{2}(E(3);\mathbb{C})\to \mathrm{H}^{4}(E(2);\mathbb{C})\to \mathrm{H}^{6}(E(1);\mathbb{C})
%\end{cases}
%\end{eqnarray*}
%are exact at the middle. 
The strictness also implies 
%\(\mathrm{H}^{3}(X;\Omega_{X})=\mathrm{H}^{1}(X;\Omega_{X}^{3})=0\)
%by Serre duality. Thus \(\mathrm{H}^{4}(X;\mathbb{C})=\mathrm{H}^{2,2}(X)\)
%and we must have
%\begin{eqnarray*}
%\mathrm{Gr}_{4+r}^{\mathcal{W}(M)}\mathrm{H}^{4}(X;\mathbb{C})=0~\mbox{for}~r\ne 0.
%\end{eqnarray*}
%In particular, the vanishing for \(r=-2\) implies the sequence
%\begin{eqnarray*}
%E^{1,2}_{1}=\mathrm{H}^{2}(E(2);\mathbb{C})\to
%E^{2,2}_{1}=\mathrm{H}^{2}(E(3);\mathbb{C})\to 
%E^{3,2}_{1}=\mathrm{H}^{2}(E(4);\mathbb{C})=0
%\end{eqnarray*}
%is exact at the middle. 
%By duality, we see that
\(\mathrm{H}^{2}(X;\mathbb{C})=\mathrm{H}^{1,1}(X)\) and
the sequence
\begin{eqnarray}
\label{eq:E11}
E^{0,1}_{1}=\mathrm{H}^{1}(E(1);\mathbb{C})\to
E^{1,1}_{1}=\mathrm{H}^{1}(E(2);\mathbb{C})\to 
E^{2,1}_{1}=\mathrm{H}^{1}(E(3);\mathbb{C})
\end{eqnarray}
is exact at the middle.
%For \(r=-1\), we have an exact sequence
%\begin{eqnarray*}
%E^{0,3}_{1}=\mathrm{H}^{3}(E(1);\mathbb{C})\oplus\mathrm{H}^{1}(E(3);\mathbb{C})\to
%E^{1,3}_{1}=\mathrm{H}^{3}(E(2);\mathbb{C})\to 
%E^{2,3}_{1}=\mathrm{H}^{3}(E(3);\mathbb{C})=0
%\end{eqnarray*}

We remark that for \(\mathrm{H}^{3}(X;\mathbb{C})\)
and any \(r\), the quotient
\(\mathrm{Gr}_{3+r}^{\mathcal{W}(M)}\mathrm{H}^{3}(X;\mathbb{C})\)
is equal to the cohomology of the complex
\begin{eqnarray}
\label{eq:w-spectral-sq}
E^{-r-1,3+r}_{1}=\mathbf{H}^{2}(\mathrm{Gr}_{r+1}^{\mathcal{W}(M)})\to 
E^{-r,3+r}_{1}=\mathbf{H}^{3}(\mathrm{Gr}_{r}^{\mathcal{W}(M)})\to 
E^{-r+1,3+r}_{1}=\mathbf{H}^{4}(\mathrm{Gr}_{r-1}^{\mathcal{W}(M)})
\end{eqnarray}
where the corresponding morphisms are 
both alternating Gysin maps, both alternating restriction maps, or
an alternating Gysin map composed with an alternating restriction map.
%In the last case, this is still a complex since \(E\) is \(d\)-semistable 
%(see \cite{1983-Friedman-global-smoothings-of-varieties-with-normal-crossings} for the 
%definition).

To discuss positivity, we recall a result of
Fujisawa. In \cite{2014-Fujisawa-polarization-on-limiting-mixed-hodge-structures}*{\S7},
Fujisawa 
defined for a log deformation \(E\to\ast\) a \emph{trace morphism}
\begin{eqnarray*}
\operatorname{Tr}\colon \mathbf{H}^{2n}(E;\mathcal{K}_{\mathbb{C}})\to \mathbb{C}
\end{eqnarray*}
and introduced a pairing on \(\mathbf{H}^{q}(E;\mathcal{K}_{\mathbb{C}})\otimes
\mathbf{H}^{2n-q}(E;\mathcal{K}_{\mathbb{C}})\) by means of a 
\emph{weak cohomological mixed Hodge complex \(\mathcal{K}\)} on \(E\).
Here \(n\) is the complex dimension of \(E\).
The hypercohomology of \(\mathcal{K}_{\mathbb{C}}\) also computes the cohomology of 
the nearby fiber. However,
unlike the relative
logarithmic de Rham complex, 
the complex \(\mathcal{K}\) carries a natural 
multiplicative structure and hence together with the trace morphism, 
it induces a pairing between
their cohomology groups.

Specializing to \(q=n\) and noting that \(\mathbf{H}^{n}
(E;\mathcal{K}_{\mathbb{C}})\cong\mathrm{H}^{n}(X;\mathbb{C})\), we obtain a pairing 
\begin{eqnarray*}
\langle -,-\rangle \colon \mathrm{H}^{n}(X;\mathbb{C})\otimes
\mathrm{H}^{n}(X;\mathbb{C})\to\mathbb{C}.
\end{eqnarray*}
The upshot is that the pairing descends to 
a pairing between quotients 
\begin{eqnarray*}
\langle-,-\rangle \colon \mathrm{Gr}_{n+r}^{\mathcal{W}(M)}\mathrm{H}^{n}(X;\mathbb{C})
\otimes \mathrm{Gr}_{n-r}^{\mathcal{W}(M)}\mathrm{H}^{n}(X;\mathbb{C})\to\mathbb{C}
\end{eqnarray*}
and it 
is given by a sum (up to a factor) of the usual cohomological pairing on \(E(k)\)'s.
For the later use, let us give the precise formula for the case \(n=3\) and \(r=0,1\)
and the explanation will be given later in a moment.

\begin{situation}
For \(r=0\), the above sequence \eqref{eq:w-spectral-sq} becomes
\begin{eqnarray}
\label{eq:E03}
\mathrm{H}^{1}(E(2);\mathbb{C})\xrightarrow{a} \mathrm{H}^{3}(E(1);\mathbb{C})\oplus\mathrm{H}^{1}(E(3);\mathbb{C})\xrightarrow{b}\mathrm{H}^{3}(E(2);\mathbb{C})
\end{eqnarray}
whose cohomology gives \(\mathrm{Gr}_{3}^{\mathcal{W}(M)}\mathrm{H}^{3}(X;\mathbb{C})\).
For \([\alpha,\beta]\) and \([\gamma,\delta]\in
\mathrm{Gr}_{3}^{\mathcal{W}(M)}\mathrm{H}^{3}(X;\mathbb{C})\), 
the pairing is defined as
\begin{equation}
\label{eq:pairing-gr33}
\langle [\alpha,\beta],[\gamma,\delta]\rangle 
= (2\pi\sqrt{-1})^{3}\left(\frac{(-1)^{3}}{(2\pi\sqrt{-1})^{3}}\int_{E(1)} \alpha\cup\gamma
+\frac{(-1)^{3}}{(2\pi\sqrt{-1})}
\int_{E(3)} \beta\cup\delta\right)
\end{equation}
where \((\bullet,\bullet)\) is a representative of 
\([\bullet,\bullet]\) inside \(\operatorname{ker}(b)\).
Observe that the pairing is well-defined. If 
\((\alpha,\beta)\in\operatorname{Im}(a)\), then 
\((\alpha,\beta)=(\phi(x),\psi(x))\)
for some \(x\in\mathrm{H}^{1}(E(2);\mathbb{C})\).
(See \S\ref{subsec:st-limiting-mhs} for the notations.) 
On one hand, we calculate
\begin{eqnarray*}
\int_{E(1)} \alpha\cup\gamma=
\int_{E(1)} \phi(x)\cup\gamma=
-(2\pi\sqrt{-1})\int_{E(2)} x\cup\psi(\gamma).
\end{eqnarray*}
On the other hand, we have
\begin{eqnarray*}
\int_{E(3)} \beta\cup\delta=
\int_{E(3)} \psi(x)\cup\delta=
-\frac{1}{2\pi\sqrt{-1}}
\int_{E(2)}x\cup \phi(\delta).
\end{eqnarray*}
The equation \eqref{eq:pairing-gr33} becomes
\begin{eqnarray*}
(2\pi\sqrt{-1})^{3}\left(-\frac{(-1)^{3}}{(2\pi\sqrt{-1})^{2}}\int_{E(2)} 
x\cup\psi(\gamma)
-\frac{(-1)^{3}}{(2\pi\sqrt{-1})^{2}}
\int_{E(3)} x\cup\phi(\delta)\right)=0
\end{eqnarray*}
since \(\psi(\gamma)+\phi(\delta)=0\).
The remaining cases can be checked in a similar fashion.
\end{situation}

\begin{situation}
For \(r=1\), the quotient \(\mathrm{Gr}_{4}^{\mathcal{W}(M)}
\mathrm{H}^{3}(X;\mathbb{C})\) is given by
the cohomology of
\begin{eqnarray*}
\mathrm{H}^{0}(E(3);\mathbb{C})\to 
\mathrm{H}^{2}(E(2);\mathbb{C})\oplus\mathrm{H}^{0}(E(4);\mathbb{C})\to 
\mathrm{H}^{4}(E(1);\mathbb{C})\oplus\mathrm{H}^{2}(E(3);\mathbb{C})
\end{eqnarray*}
while for \(r=-1\), the quotient \(\mathrm{Gr}_{2}^{\mathcal{W}(M)}
\mathrm{H}^{3}(X;\mathbb{C})\) is given by
the cohomology of
\begin{eqnarray*} 
\mathrm{H}^{2}(E(1);\mathbb{C})\oplus\mathrm{H}^{0}(E(3);\mathbb{C})\to 
\mathrm{H}^{2}(E(2);\mathbb{C})\oplus\mathrm{H}^{0}(E(4);\mathbb{C})\to 
\mathrm{H}^{2}(E(3);\mathbb{C}).
\end{eqnarray*}
For \([\alpha,\beta]\in
\mathrm{Gr}_{4}^{\mathcal{W}(M)}\mathrm{H}^{3}(X;\mathbb{C})\)
and \([\gamma,\delta]\in
\mathrm{Gr}_{2}^{\mathcal{W}(M)}\mathrm{H}^{3}(X;\mathbb{C})\)
the pairing is given by
\begin{equation}
\label{eq:pairing-gr24}
\langle [\alpha,\beta],[\gamma,\delta]\rangle 
= (2\pi\sqrt{-1})^{3}\left(\frac{(-1)^{3}}{(2\pi\sqrt{-1})^{2}}
\int_{E(2)} \alpha\cup\gamma
+\frac{(-1)^{3}}{(2\pi\sqrt{-1})^{0}}
\int_{E(4)} \beta\cup\delta\right)
\end{equation}
where \((\bullet,\bullet)\) is a lifting of 
\([\bullet,\bullet]\) in the corresponding vector spaces.
One can check again that this is well-defined. 
\end{situation}

Now recall that the family is a finite distance degeneration, i.e.~\(Na_{0}=0\).
This in turns implies \(N^{2}=0\) on \(\mathrm{H}^{3}(X;\mathbb{C})\)
and consequently \(\mathrm{Gr}_{1}^{\mathcal{W}(M)}\mathrm{H}^{3}(X;\mathbb{C})=0\); namely~the
following sequence is exact
\begin{eqnarray}
\label{eq:E21}
E_{1}^{1,1}=\mathrm{H}^{1}(E(2);\mathbb{C})\to
E^{2,1}_{1}=\mathrm{H}^{1}(E(3);\mathbb{C})\to E^{3,1}_{1}=\mathrm{H}^{1}(E(4);\mathbb{C})=0.
\end{eqnarray}

Recall that a one-parameter semi-stable degeneration \(f\colon\mathcal{X}\to\Delta\) 
satisfying Hypothesis \ref{assumption:com-kahler}
is called a \emph{one-parameter K\"{a}hler degeneration} in \cite{2008-Peters-Steenbrink-mixed-Hodge-structures}. We remark that
Hypothesis \ref{assumption:com-kahler} also implies
Hypothesis \ref{assumption:isomorphism} by 
\cite{2008-Peters-Steenbrink-mixed-Hodge-structures}*{Theorem 11.40}.

Let us explain how the pairings \eqref{eq:pairing-gr33} and
\eqref{eq:pairing-gr24} are related to the topological pairing on \(X\).
Under Hypothesis \ref{assumption:com-kahler}, we have the Clemens--Schmid exact 
sequence (cf.~\cite{2008-Peters-Steenbrink-mixed-Hodge-structures}*{Corollary 11.44})
\begin{eqnarray*}
\cdots\to\mathrm{H}_{2n+2-m}(E;\mathbb{C})\to
\mathrm{H}^{m}(E;\mathbb{C})\xrightarrow{i}
\mathrm{H}^{m}(X;\mathbb{C})\xrightarrow{N}
\mathrm{H}^{m}(X;\mathbb{C})\to\mathrm{H}_{2n-m}(E;\mathbb{C})\to\cdots.
\end{eqnarray*}
We can put a mixed Hodge structure on 
\(\mathrm{H}^{k}(E;\mathbb{C})\) via the Mayer--Vietoris resolution and
we shall denote by \(W\) the corresponding weight filtration.
This is an exact sequence of mixed Hodge structures
whose morphisms are of type \((n+1,n+1)\), \((0,0)\), \((-1,-1)\),
and \((-n,-n)\) respectively. The map \(N\) is the nilpotent operator while
\(i\) is the map induced from the inclusion \(X\subset \mathcal{X}\) and
the deformation retraction \(E\to \mathcal{X}\).

Specializing to \(n=m=3\), we obtain
\begin{eqnarray*}
0\to\mathrm{H}_{5}(E;\mathbb{C})\to
\mathrm{H}^{3}(E;\mathbb{C})\xrightarrow{i}
\mathrm{H}^{3}(X;\mathbb{C})\xrightarrow{N}
\mathrm{H}^{3}(X;\mathbb{C})\to\mathrm{H}_{3}(E;\mathbb{C})\to\cdots.
\end{eqnarray*}
Look at the graded piece
\begin{eqnarray*}
0\to\mathrm{Gr}_{-5}^{W}\mathrm{H}_{5}(E;\mathbb{C})
\to \mathrm{Gr}_{3}^{W}\mathrm{H}^{3}(E;\mathbb{C})
\to \mathrm{Gr}_{3}^{\mathcal{W}(M)}\mathrm{H}^{3}(X;\mathbb{C})\xrightarrow{N} 0.
\end{eqnarray*}
Here the last \(0\) comes from the assumption that \(\mathcal{X}\to \Delta\)
has finite distance. 
Consider the commutative diagram
\begin{eqnarray*}
\begin{tikzcd}
& &\mathrm{H}^{3}(E(1);\mathbb{C})\ar[r,"\phi"]\ar[d]&\mathrm{H}^{3}(E(2);\mathbb{C})\ar[d]\\
&\mathrm{H}^{1}(E(2);\mathbb{C})\ar[r,"a"]
&\mathrm{H}^{3}(E(1);\mathbb{C})\oplus\mathrm{H}^{1}(E(3);\mathbb{C})\ar[r,"b"]
&\mathrm{H}^{3}(E(2);\mathbb{C}).
\end{tikzcd}
\end{eqnarray*}
We then obtain a map
\begin{eqnarray*}
\operatorname{ker}(\phi)\to \operatorname{ker}(b)\to 
\operatorname{ker}(b)\slash \operatorname{im}(a)
\end{eqnarray*}
which gives the surjection in the Clemens--Schmid exact sequence
\begin{eqnarray*}
\mathrm{Gr}_{3}^{W}\mathrm{H}^{3}(E;\mathbb{C})
\to \mathrm{Gr}_{3}^{\mathcal{W}(M)}\mathrm{H}^{3}(X;\mathbb{C}).
\end{eqnarray*}

We have a pairing \eqref{eq:pairing-gr33} on \(\operatorname{ker}(b)\).
From the proceeding discussion, for any \([\alpha_{i},\beta_{i}]\in
\operatorname{ker}(b)\), we can find an element \(\xi_{i}\in\operatorname{ker}(\phi)\)
such that \([\xi_{i},0]=[\alpha_{i},\beta_{i}]\) by the surjectivity.
It follows that
\begin{align}
\begin{split}
\langle [\alpha_{1},\beta_{1}],[\alpha_{2},\beta_{2}]\rangle =
\langle [\xi_{1},0],[\xi_{2},0]\rangle = (-1)^{3}\int_{E(1)}\xi_{1}\cup \xi_{2}
=(-1)^{3}\int_{E}\xi_{1}\cup \xi_{2}.
\end{split}
\end{align}
In the last integral, we think of \(\xi_{i}\) as an element in \(\mathrm{H}^{3}(E;\mathbb{C})\)
and use the fact that \(\mathrm{H}_{6}(E;\mathbb{Z})=\mathrm{H}_{6}(E(1);\mathbb{Z})\).
Together with the deformation retraction \(\mathcal{X}\supset E\) and
noticing that \(X\) and \(E\) are homologuous,
the last integral can be identified with the topological pairing 
on \(X\), i.e.,
\begin{eqnarray*}
(-1)^{3}\int_{E}\xi_{1}\cup \xi_{2}=(-1)^{3}\int_{X} i(\xi_{1})\cup i(\xi_{2}),
\end{eqnarray*}
and this gives \eqref{eq:pairing-gr33} up to twist.

Similarly, we have an isomorphism
\begin{eqnarray*}
\mathrm{Gr}_{2}^{W}\mathrm{H}^{3}(E;\mathbb{C})
\to \mathrm{Gr}_{2}^{\mathcal{W}(M)}\mathrm{H}^{3}(X;\mathbb{C})
\end{eqnarray*}
from the Clemens--Schmid exact sequence; this is deduced
from the commutative diagram
\begin{eqnarray*} 
\begin{tikzcd}
&\mathrm{H}^{2}(E(1);\mathbb{C})\ar[r,"\phi"]\ar[d]
&\mathrm{H}^{2}(E(2);\mathbb{C})\ar[r]\ar[d]
&\mathrm{H}^{2}(E(3);\mathbb{C})\ar[d,equal]\\
&\mathrm{H}^{2}(E(1);\mathbb{C})\oplus\mathrm{H}^{0}(E(3);\mathbb{C})\ar[r,"c"]
&\mathrm{H}^{2}(E(2);\mathbb{C})\oplus\mathrm{H}^{0}(E(4);\mathbb{C})\ar[r,"d"]
&\mathrm{H}^{2}(E(3);\mathbb{C}).
\end{tikzcd}
\end{eqnarray*}
We can construct a long exact sequence involving the map \(\phi\) in
the top row as follows.
Consider the short exact sequence
\begin{eqnarray*} 
0\to A^{\bullet}(E(2))[-1]\to A^{\bullet}(E(1))
\oplus A^{\bullet-1}(E(2))\to A^{\bullet}(E(1))\to 0
\end{eqnarray*}
where the second map is given by \(\eta\to (0,\eta)\) while
the third one is the projection. The differential \(D\) on the complex
\(A^{\bullet}(E(1))
\oplus A^{\bullet-1}(E(2))\) is given by the formula
\begin{eqnarray*}
D(\omega,\eta):=(\mathrm{d}\omega, \phi(\omega)-\mathrm{d}\eta).
\end{eqnarray*}
One checks \(D^{2}=0\). The cohomology
group is denoted by \(\mathrm{H}^{k}(E(1);E(2))\).
We have an induced long exact sequence
\begin{eqnarray*}
\cdots\to \mathrm{H}^{k-1}(E(2))\to \mathrm{H}^{k}(E(1);E(2))\to 
\mathrm{H}^{k}(E(1))\xrightarrow{\varepsilon}\mathrm{H}^{k}(E(2))\to\cdots
\end{eqnarray*}
whose connecting homomorphism \(\varepsilon\) is equal to \(\phi\).
We have an injection
\begin{eqnarray*}
\mathrm{Gr}_{2}^{W}\mathrm{H}^{3}(E;\mathbb{C})\hookrightarrow
\mathrm{H}^{2}(E(2))\slash\operatorname{im}(\phi) \hookrightarrow\mathrm{H}^{3}(E(1);E(2)).
\end{eqnarray*}
A diagram chasing shows that under the inclusion, we have
\begin{eqnarray*}
\mathrm{Gr}_{2}^{\mathcal{W}(M)}\mathrm{H}^{3}(X;\mathbb{C})
\cong \mathrm{Gr}_{2}^{W}\mathrm{H}^{3}(E;\mathbb{C})\hookrightarrow
\mathrm{H}^{3}_{\mathrm{c}}(U;\mathbb{C})
\end{eqnarray*}
where \(U=E\setminus E_{\mathrm{sing}}\).
%=E(1)\setminus E(2)\). 
Consider the dual diagram
\begin{eqnarray*}
\begin{tikzcd}
&\mathrm{H}_{2}(E(3);\mathbb{C})\ar[r,"\Psi'"]
&\mathrm{H}_{2}(E(2);\mathbb{C})\ar[r,"\Psi"]
&\mathrm{H}_{2}(E(1);\mathbb{C})\\
&\mathrm{H}^{0}(E(3);\mathbb{C})\ar[u,"\cong"]\ar[r]\ar[d,equal]
&\mathrm{H}^{2}(E(2);\mathbb{C})\ar[u,"\cong"]\ar[r,"\psi"]
&\mathrm{H}^{4}(E(1);\mathbb{C})\ar[u,"\cong"]\\
&\mathrm{H}^{0}(E(3);\mathbb{C})\ar[r]
&\mathrm{H}^{2}(E(2);\mathbb{C})\oplus\mathrm{H}^{0}(E(4);\mathbb{C})\ar[r]\ar[u]
&\mathrm{H}^{4}(E(1);\mathbb{C})\oplus\mathrm{H}^{2}(E(3);\mathbb{C}).\ar[u]
\end{tikzcd}
\end{eqnarray*}
%yields a surjection
%\begin{eqnarray*}
%\mathrm{H}^{3}(U;\mathbb{C})
%\to \mathrm{Gr}_{4}^{\mathcal{W}(M)}\mathrm{H}^{3}(X;\mathbb{C}).
%\end{eqnarray*}
Write \(\Psi=(\Psi_{1},\ldots,\Psi_{m})\).
If \((\sigma_{I})\in \mathrm{H}_{2}(E(2);\mathbb{C})\)
belongs to \(\operatorname{ker}(\Psi)\), then
\(\Psi_{j}(\sigma_{I})\) is a coboundary, i.e.~there exists a
\(3\)-chain \(\Gamma_{j}\) such that 
\(\partial\Gamma_{j}=\Psi_{j}(\sigma_{I})\).
Let us denote by \(\mathcal{S}\) the collection of
chains constructed as above, i.e.~
\begin{eqnarray*}
\mathcal{S}=\{\Gamma:=(\Gamma_{1},\ldots,\Gamma_{m})~|~\Gamma_{i}\in \mathrm{H}_{3}(E_{i},\cup_{j\ne i}E_{ij};\mathbb{C}),~\partial\Gamma_{i}=\Psi_{i}(\sigma_{I})~\mbox{for all}~
(\sigma_{I})\in\operatorname{ker}(\Psi)\}.
\end{eqnarray*}
Note that for each \(\Gamma\in\mathcal{S}\) the chains \(\Gamma_{i}\)'s can
be glued into a \(3\)-cycle in \(E\) along their boundaries.

Recall that 
\begin{eqnarray*}
\Psi = \bigoplus_{j=1}^{2} (-1)^{j-1}(\iota_{2,j})_{\ast}\colon 
\mathrm{H}_{q}(E(2);\mathbb{C})\to \mathrm{H}_{q}(E(1);\mathbb{C})
\end{eqnarray*}
is defined on the level of chains.
Combined with Lefschetz duality, it gives rise to a map
\begin{eqnarray*}
\operatorname{ker}(\Psi)\to \mathcal{S}\subseteq
\bigoplus_{i=1}^{m}\mathrm{H}_{3}(E_{i},\cup_{j\ne i}E_{ij};\mathbb{C})\subseteq
\mathrm{H}^{3}(U;\mathbb{C})
\end{eqnarray*}
an induces an injection
\begin{eqnarray*}
\mathrm{Gr}_{2}^{\mathcal{W}(M)}\mathrm{H}^{3}(X;\mathbb{C})\cong\operatorname{ker}(\Psi)
\slash\operatorname{im}(\Psi')\to \mathcal{S}.
\end{eqnarray*}
%The image of \(\mathcal{S}\) under the boundary map
%\begin{eqnarray*}
%\mathcal{S}\subset\bigoplus_{i=1}^{m}\mathrm{H}_{3}(E_{i},\cup_{j\ne i}E_{ij};\mathbb{C})
%\xrightarrow{B} \mathrm{H}_{2}(
%E(2);\mathbb{C}),
%\end{eqnarray*}
%where
%factors through \(\operatorname{ker}(\Psi)\).
%One can check that \(B(\mathcal{S})+\operatorname{im}(\Psi_{1})=
%\operatorname{ker}(\Psi)\) and hence
%Also, from the Mayer--Vietoris resolution for \(E_{\mathrm{sing}}=\cup_{|I|=2} E_{I}\)
%\begin{eqnarray*}
%\mathbb{C}_{E_{\mathrm{sing}}}\to \mathbb{C}_{E(2)}\to\mathbb{C}_{E(3)}\to 
%\mathbb{C}_{E(4)}
%\end{eqnarray*}
%as well as the assumption of the strictness on \(X\), we can prove that
%there exists a surjection
%and therefore we have an induced injection
%\begin{eqnarray*}
%\bar{\Psi}\colon 
%\mathrm{H}_{2}(E_{\mathrm{sing}};\mathbb{C})\to \mathrm{H}_{2}(E(1);\mathbb{C})
%\end{eqnarray*}
%whose kernel can be identified with the
%image of the natural map 
%\begin{eqnarray*}
%\mathrm{H}_{3}(E,E_{\mathrm{sing}};\mathbb{C})\to 
%\mathrm{H}_{2}(E_{\mathrm{sing}};\mathbb{C}).
%\end{eqnarray*}
%Hence we obtain
%\begin{eqnarray*}
%\mathrm{H}^{3}(U;\mathbb{C})\cong 
%\mathrm{H}_{3}(E,E_{\mathrm{sing}};\mathbb{C})\to 
%\mathrm{Gr}_{2}^{\mathcal{W}(M)}\mathrm{H}^{3}(X;\mathbb{C})\subseteq
%\mathrm{H}_{2}(E_{\mathrm{sing}};\mathbb{C}).
%\end{eqnarray*}
Consequently, the induced pairing 
\begin{eqnarray*}
\langle -,-\rangle\colon \mathrm{Gr}_{4}^{\mathcal{W}(M)}\mathrm{H}^{3}(X;\mathbb{C})
\times \mathrm{Gr}_{2}^{\mathcal{W}(M)}\mathrm{H}^{3}(X;\mathbb{C}) \to \mathbb{C}
\end{eqnarray*}
can be computed via the usual pairing between
relative cohomology and relative homology, or
the uaual Poincar\'{e} pairing
\(\mathrm{H}^{3}(U)\times\mathrm{H}^{3}_{\mathrm{c}}(U)\to \mathrm{H}^{6}_{\mathrm{c}}(U)
\cong\mathrm{H}_{0}(U)
\cong\mathbb{C}\) 
on \(U\). Here the isomorphism \(\mathrm{H}^{6}_{\mathrm{c}}(U)
\cong\mathbb{C}\) comes from integration over \(U\). 
This gives \eqref{eq:pairing-gr24}
up to twist.
%the limit of ``capping with the fundamental class.''
%Since the complement of \(U\)
%has real codimension two in \(E\), it follows that \(\mathrm{H}_{6}(U)=\mathrm{H}_{6}(E)\).
\begin{remark}
We remark that the hermitian pairing
\begin{eqnarray*}
\sqrt{-1}\langle u,N_{\mathrm{st}}\bar{v}\rangle
\end{eqnarray*}
is \emph{negative definite} on 
\(\mathrm{Gr}_{4}^{\mathcal{W}(M)}\mathrm{H}^{3}(X;\mathbb{C})\), as
one can easily derive from the formula \eqref{eq:pairing-gr24}.
Here \(N_{\mathrm{st}}\) is the nilpotent operator 
from Steenbrink's theory; it can be
regarded as the induced map from the canonical map 
\begin{eqnarray*}
\mathcal{D}^{p,q}\to \mathcal{D}^{p+1,q-1}~\mbox{via}~
\omega\mapsto \overline{\omega}.
\end{eqnarray*}
We note that \(N=-N_{\mathrm{st}}\). In fact, recall that 
the trivialization of \(\mathcal{X}_{\infty}\to\mathfrak{h}\)
induces a diffeomorphism
\begin{eqnarray*}
\mathcal{X}_{\infty,0}\xrightarrow{\simeq}\mathcal{X}_{\infty,1}
\end{eqnarray*}
and hence a monodromy operator \(T_{\mathrm{st}}
\colon \mathrm{H}^{k}(\mathcal{X}_{\infty,1};\mathbb{C})
\to \mathrm{H}^{k}(\mathcal{X}_{\infty,0};\mathbb{C})\).
We have \(T=(T_{\mathrm{st}}^{-1})^{\ast}\)
where \(T\) is the monodromy operator described in \S\ref{subsec:deligne-ext}.
Consequently, we infer that the hermitian pairing
\begin{eqnarray*}
\sqrt{-1}\langle u,N\bar{v}\rangle
\end{eqnarray*}
becomes \emph{positive definite}.
\end{remark}

\begin{proposition}
\label{prop:finite-distance-polarized}
Assuming the Hypothesis \ref{assumption:com-kahler}, then the quotient
\(\mathrm{Gr}_{3}^{\mathcal{W}(M)}\mathrm{H}^{3}(X;\mathbb{C})\) is equipped
with a polarized Hodge structure.
\end{proposition}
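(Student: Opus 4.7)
The plan is to identify $\mathrm{Gr}_3^{\mathcal{W}(M)}\mathrm{H}^3(X;\mathbb{C})$ with its $N$-primitive subspace and then invoke the classical polarization theorem for the limiting MHS of a K\"{a}hler one-parameter degeneration. The point is that once one knows the entire quotient coincides with the primitive part at weight $n=3$, the pairing \eqref{eq:pairing-gr33} coming from Fujisawa's trace morphism has to be the polarizing form supplied by the general theory.

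First, I would verify that under the finite-distance hypothesis the entire graded piece $\mathrm{Gr}_3^{\mathcal{W}(M)}\mathrm{H}^3(X;\mathbb{C})$ is $N$-primitive. The condition $d=0$, i.e.\ $Na_{0}=0$ with $a_{0}\in\mathcal{F}^3_{\mathrm{lim}}$, propagates via Hypothesis~\ref{assumption:isomorphism} to $N^{2}=0$ on all of $\mathrm{H}^{3}(X;\mathbb{C})$, as already recorded in the discussion preceding \eqref{eq:E21}. Consequently the isomorphism $N^{2}\colon\mathrm{Gr}_{5}^{\mathcal{W}(M)}\to\mathrm{Gr}_{1}^{\mathcal{W}(M)}$ forces both sides to vanish, so $N$ sends $\mathrm{Gr}_{3}^{\mathcal{W}(M)}$ into $\mathrm{Gr}_{1}^{\mathcal{W}(M)}=0$ and $\mathrm{Gr}_{3}^{\mathcal{W}(M)}$ coincides with its own primitive subspace $P_{3}:=\ker(N\colon\mathrm{Gr}_{3}\to\mathrm{Gr}_{1})$.

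Second, I would invoke the polarization theorem for limit mixed Hodge structures: under Hypothesis~\ref{assumption:com-kahler} the degeneration is a K\"{a}hler degeneration in the sense of Peters--Steenbrink, so the Steenbrink limit MHS is a polarized mixed Hodge structure in the Cattani--Kaplan--Schmid sense. In particular, each primitive piece $P_{n+k}$ carries a polarized pure Hodge structure with polarizing form $Q_{k}(u,v):=Q(u,N^{k}v)$. Specializing to $k=0$ and $n=3$ yields a polarization on $P_{3}=\mathrm{Gr}_{3}^{\mathcal{W}(M)}\mathrm{H}^{3}(X;\mathbb{C})$ which, up to the normalizing factors of $2\pi\sqrt{-1}$, is precisely the pairing \eqref{eq:pairing-gr33} constructed from Fujisawa's trace morphism. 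This proves the proposition.

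The principal obstacle is the polarization of the limit MHS itself. If one wishes a self-contained verification, the idea is to exploit the explicit formula \eqref{eq:pairing-gr33}: each summand $\mathrm{H}^{3}(E(1))$ and $\mathrm{H}^{1}(E(3))$ is the middle cohomology of a smooth K\"{a}hler manifold, polarized component-wise by the restrictions of the class from Hypothesis~\ref{assumption:com-kahler}, so the classical Hodge--Riemann bilinear relations apply on each factor. One then chases through the sequence $\mathrm{H}^{1}(E(2))\xrightarrow{a}\mathrm{H}^{3}(E(1))\oplus\mathrm{H}^{1}(E(3))\xrightarrow{b}\mathrm{H}^{3}(E(2))$ of \eqref{eq:E03} to check that the form descends to a definite form of the expected signs on each Hodge component of $\ker(b)/\mathrm{im}(a)$. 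The first Hodge--Riemann relation $\mathcal{F}^{p}\perp\mathcal{F}^{4-p}$ is automatic from the construction of the Hodge filtration on Steenbrink's complex, so the technical content is really in the second (positivity) relation.
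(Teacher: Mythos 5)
Your first route delegates the entire content of the proposition to ``the polarization theorem for limit mixed Hodge structures,'' and this is exactly where the argument breaks down. The Cattani--Kaplan--Schmid polarization statement takes as \emph{input} a polarized variation of Hodge structure over \(\Delta^{\ast}\); here the general fiber is non-K\"ahler and the second Hodge--Riemann relation on \(\mathrm{H}^{2,1}(X)\) is precisely what this section is trying to establish, so invoking that theory is circular. What Hypothesis \ref{assumption:com-kahler} buys directly is the limiting mixed Hodge structure, Hypothesis \ref{assumption:isomorphism}, and the \emph{pairing} \eqref{eq:pairing-gr33}: the paper cites Fujisawa only for the trace morphism and the construction of the pairing, and then has to prove positivity by hand. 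A ready-made polarization theorem valid under exactly these hypotheses (K\"ahler strata of the central fiber, non-K\"ahler general fiber) is not something you can simply assert.

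Your fallback ``self-contained'' sketch is closer to the paper's actual proof but contains the decisive error: the Hodge--Riemann bilinear relations on a compact K\"ahler threefold hold only on \(\mathrm{H}^{3}_{\mathrm{prim}}\), not on all of \(\mathrm{H}^{3}(E(1))\) --- the non-primitive part \(L\,\mathrm{H}^{1}(E(1))\) contributes the opposite sign. The entire content of the paper's proof is the reduction to the primitive part, and it uses an ingredient you never touch, namely strictness: \(\mathrm{H}^{5}(X;\mathbb{C})=0\) gives \(\mathrm{Gr}_{5}^{\mathcal{W}(M)}\mathrm{H}^{5}(X;\mathbb{C})=0\), i.e.\ surjectivity of \(\mathrm{H}^{3}(E(2);\mathbb{C})\to\mathrm{H}^{5}(E(1);\mathbb{C})\); combined with the Hard Lefschetz isomorphism \(\mathrm{H}^{1}(E(2);\mathbb{C})\to\mathrm{H}^{3}(E(2);\mathbb{C})\) on the surface \(E(2)\), this shows that \(\left.L\right|_{\operatorname{Im}(a)}\) surjects onto \(\mathrm{H}^{5}(E(1);\mathbb{C})\), so every class in \(\mathrm{Gr}_{3}^{\mathcal{W}(M)}\mathrm{H}^{3}(X;\mathbb{C})\) admits a representative \((\gamma,\delta)\) with \(\gamma\in\mathrm{H}^{3}_{\mathrm{prim}}(E(1);\mathbb{C})\) (while \(\delta\in\mathrm{H}^{1}(E(3);\mathbb{C})\) is automatically primitive since \(E(3)\) is a curve). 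Only after this modification do the classical relations on \(E(1)\) and \(E(3)\) yield definiteness. Your ``chase through the sequence'' gestures at a verification but does not identify this step, and without it the claimed positivity simply fails on the non-primitive directions.
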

\begin{proof}
Since \(\mathrm{H}^{5}(X;\mathbb{C})=0\), we have
\(\mathrm{Gr}_{5}^{\mathcal{W}(M)}\mathrm{H}^{5}(X;\mathbb{C})=0\), i.e.~the sequence
\begin{eqnarray*}
\mathrm{H}^{3}(E(2);\mathbb{C})
\to \mathrm{H}^{5}(E(1);\mathbb{C})\to 0
\end{eqnarray*}
is exact at the middle. 

Let \(\omega_{I}\) be the K\"{a}hler form on \(E_{I}\) coming from the
restriction of the universal class
in \(\mathrm{H}^{2}(E;\mathbb{R})\) under Hypothesis \ref{assumption:com-kahler}.
Consider the commutative diagram
\begin{equation}
\begin{tikzcd}
&\mathrm{H}^{1}(E(2);\mathbb{C})
\ar[r,"a"]\ar[d] &\mathrm{H}^{3}(E(1);\mathbb{C})\oplus\mathrm{H}^{1}(E(3);\mathbb{C})\ar[d,"L"]\\
&\mathrm{H}^{3}(E(2);\mathbb{C})
\ar[r] &\mathrm{H}^{5}(E(1);\mathbb{C}).
\end{tikzcd}
\end{equation}
In this diagram, 
\begin{itemize}
\item the top row is from the 
spectral sequence computing \(\mathrm{Gr}_{3}^{\mathcal{W}(M)}\mathrm{H}^{3}(X;\mathbb{C})\);
\item the bottom row is from the 
spectral sequence computing \(\mathrm{Gr}_{5}^{\mathcal{W}(M)}\mathrm{H}^{5}(X;\mathbb{C})\);
\item the vertical maps are give by a product of Lefschetz operators using \(\omega_{I}\).
\end{itemize}
From the discussion above, the bottom arrow is surjective. 
Moreover, the left vertical arrow is an isomorphism.
Thus 
\begin{eqnarray*}
\left.L\right|_{\operatorname{Im}(a)}
\colon \operatorname{Im}(a)\to \mathrm{H}^{5}(E(1);\mathbb{C})
\end{eqnarray*}
is also a surjection. In particular, if 
\begin{eqnarray*}
[\gamma,\delta]\in \mathrm{Gr}_{3}^{\mathcal{W}(M)}\mathrm{H}^{3}(X;\mathbb{C})
\end{eqnarray*}
and \((\gamma,\delta)\in 
\mathrm{H}^{3}(E(1);\mathbb{C})\oplus\mathrm{H}^{1}(E(3);\mathbb{C})\) is any representative,
we can use elements in \(\operatorname{Im}(a)\) to modify \((\gamma,\delta)\)
such that \(L(\gamma,\delta)=0\), i.e.
\begin{eqnarray*}
\gamma\in\mathrm{H}^{3}_{\mathrm{prim}}(E(1);\mathbb{C}).
\end{eqnarray*}
Therefore the quotient
\(\mathrm{Gr}_{3}^{\mathcal{W}(M)}\mathrm{H}^{3}(X;\mathbb{C})\)
carries a polarized Hodge structure.
\end{proof}

The associated
sesquilinear pairing \(\langle Cu,\bar{v}\rangle\) 
(here \(C\) is the Weil operator) 
between the quotients \(\mathrm{Gr}_{4}^{\mathcal{W}(M)}\mathrm{H}^{3}(X;\mathbb{C})\)
and \(\mathrm{Gr}_{2}^{\mathcal{W}(M)}\mathrm{H}^{3}(X;\mathbb{C})\)
is positive definite as
long as the surface \(E(2)\) is compact K\"{a}hler and the statement
\begin{equation}
\label{eq:implication}
``\mbox{\([\alpha,\beta]\in\mathrm{Gr}_{4}^{\mathcal{W}(M)}\mathrm{H}^{3}(X;\mathbb{C})
\)}~\Rightarrow~\alpha\in \mathrm{H}^{2}_{\mathrm{prim}}(E(2);\mathbb{C})"
\tag{\(\ast\)}
\end{equation}
holds. Fortunately, we have
%The Hypothesis \ref{assumption:com-kahler} ensures
%that \(E(2)\) is K\"{a}hler. 
%However, at this moment, we do
%not know whether or not the implication \eqref{eq:implication}
%is true in general.

\begin{proposition}
Under the Hypothesis \ref{assumption:com-kahler}, if 
\begin{eqnarray*}
[\alpha,\beta]\in\mathrm{Gr}_{4}^{\mathcal{W}(M)}\mathrm{H}^{3}(X;\mathbb{C})
%\operatorname{ker}
%\left(\mathrm{H}^{2}(E(2);\mathbb{C})\oplus\mathrm{H}^{0}(E(4);\mathbb{C})\to 
%\mathrm{H}^{4}(E(1);\mathbb{C})\oplus\mathrm{H}^{2}(E(3);\mathbb{C})\right),
\end{eqnarray*}
then there exists a representative \((\alpha,\beta)\)
such that \(\alpha\in \mathrm{H}^{2}_{\mathrm{prim}}(E(2);\mathbb{C})\).
\end{proposition}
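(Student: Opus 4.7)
The plan is to mirror the commutative-diagram argument in the proof of Proposition \ref{prop:finite-distance-polarized}, but now with an extra Lefschetz shift. The strictness of \(X\) forces \(\mathrm{H}^{5}(X;\mathbb{C})=0\) and hence \(\mathrm{Gr}_{6}^{\mathcal{W}(M)}\mathrm{H}^{5}(X;\mathbb{C})=0\); this vanishing, together with hard Lefschetz on the curves \(E(3)\), will let me modify an arbitrary representative \((\alpha,\beta)\) by an element of \(\operatorname{Im}(c)\) so that the \(E(2)\)-component lands in \(\mathrm{H}^{2}_{\mathrm{prim}}(E(2);\mathbb{C})\).

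The first step is to write down the three-term Steenbrink complex computing \(\mathrm{Gr}_{6}^{\mathcal{W}(M)}\mathrm{H}^{5}(X;\mathbb{C})\). Shifting each cohomological degree in the \(r=1\), \(n=3\) sequence up by two, and using that \(E(3)\) has complex dimension one (so \(\mathrm{H}^{k}(E(3);\mathbb{C})=0\) for \(k\ge 3\)), this sequence becomes
\begin{equation*}
\mathrm{H}^{2}(E(3);\mathbb{C})\xrightarrow{c'}\mathrm{H}^{4}(E(2);\mathbb{C})\oplus\mathrm{H}^{2}(E(4);\mathbb{C})\xrightarrow{b'}\mathrm{H}^{6}(E(1);\mathbb{C}),
\end{equation*}
exact at the middle. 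The universal class \([\omega]\in\mathrm{H}^{2}(E;\mathbb{R})\) from Hypothesis \ref{assumption:com-kahler} restricts to a K\"ahler class on every stratum \(E_{I}\), and cup product with \(\omega\) commutes with both the alternating restriction maps \(\psi\) (by naturality of pullback) and the alternating Gysin maps \(\phi\) (by the projection formula). This yields a commutative diagram
\begin{equation*}
\begin{tikzcd}[column sep=1.1em]
\mathrm{H}^{0}(E(3))\ar[r,"c"]\ar[d,"L"] & \mathrm{H}^{2}(E(2))\oplus\mathrm{H}^{0}(E(4))\ar[r,"b"]\ar[d,"L"] & \mathrm{H}^{4}(E(1))\oplus\mathrm{H}^{2}(E(3))\ar[d]\\
\mathrm{H}^{2}(E(3))\ar[r,"c'"] & \mathrm{H}^{4}(E(2))\oplus\mathrm{H}^{2}(E(4))\ar[r,"b'"] & \mathrm{H}^{6}(E(1))
\end{tikzcd}
\end{equation*}
in which the vertical arrows are multiplications by \(\omega\).

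Now take any representative \((\alpha,\beta)\in\ker(b)\). By commutativity \(b'(L\alpha,L\beta)=L\cdot b(\alpha,\beta)=0\), so \((L\alpha,L\beta)\in\ker(b')=\operatorname{Im}(c')\); pick \(y\in\mathrm{H}^{2}(E(3);\mathbb{C})\) with \(c'(y)=(L\alpha,L\beta)\). Since each connected component of \(E(3)\) is a smooth compact K\"ahler curve, the hard Lefschetz theorem gives an isomorphism \(L\colon\mathrm{H}^{0}(E(3);\mathbb{C})\xrightarrow{\sim}\mathrm{H}^{2}(E(3);\mathbb{C})\), so there is a unique \(x\in\mathrm{H}^{0}(E(3);\mathbb{C})\) with \(Lx=y\). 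Write \(c(x)=(c_{2}(x),c_{4}(x))\in\mathrm{H}^{2}(E(2))\oplus\mathrm{H}^{0}(E(4))\); the commutativity of the left square gives \(Lc_{2}(x)=L\alpha\). Replacing \((\alpha,\beta)\) by the cohomologous representative \((\alpha,\beta)-c(x)\) then yields \(\alpha':=\alpha-c_{2}(x)\) with \(L\alpha'=0\), i.e.~\(\alpha'\in\mathrm{H}^{2}_{\mathrm{prim}}(E(2);\mathbb{C})\), as required.

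The main obstacle lies in the first step: deriving the precise shape of the \(r=1\), \(n=5\) Steenbrink complex and verifying that the Lefschetz operator \(L=\cup\omega\) genuinely lifts, summand by summand, to a chain map between the two Steenbrink complexes. Once that naturality is in place, the argument reduces to a direct diagram chase combined with hard Lefschetz on curves.
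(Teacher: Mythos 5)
Your argument is correct, but it follows a genuinely different route from the paper's. The paper does \emph{not} compare the $\mathrm{Gr}_{4}^{\mathcal{W}(M)}\mathrm{H}^{3}$ complex with the $\mathrm{Gr}_{6}^{\mathcal{W}(M)}\mathrm{H}^{5}$ complex. Instead it places the $\mathrm{Gr}_{2}^{\mathcal{W}(M)}\mathrm{H}^{3}$ complex over the $\mathrm{Gr}_{4}^{\mathcal{W}(M)}\mathrm{H}^{5}$ complex, so that the relevant left vertical arrow is $L\oplus L\colon\mathrm{H}^{2}(E(1))\oplus\mathrm{H}^{0}(E(3))\to\mathrm{H}^{4}(E(1))\oplus\mathrm{H}^{2}(E(3))$ (hard Lefschetz on the threefolds $E(1)$ and the curves $E(3)$); this first produces primitive representatives for classes in $\mathrm{Gr}_{2}^{\mathcal{W}(M)}\mathrm{H}^{3}(X;\mathbb{C})$, and the conclusion is then transferred to $\mathrm{Gr}_{4}^{\mathcal{W}(M)}\mathrm{H}^{3}(X;\mathbb{C})$ by invoking Hypothesis~\ref{assumption:isomorphism} (the identity on the common middle term inducing an isomorphism $\ker(d)/\operatorname{im}(c)\cong\ker(b)/\operatorname{im}(a)$, whence $\operatorname{im}(a)\subseteq\operatorname{im}(c)$). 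Your version works directly on $\mathrm{Gr}_{4}^{\mathcal{W}(M)}\mathrm{H}^{3}$, uses only the vanishing $\mathrm{Gr}_{6}^{\mathcal{W}(M)}\mathrm{H}^{5}(X;\mathbb{C})=0$ (again a consequence of $\mathrm{H}^{5}(X;\mathbb{C})=0$) and hard Lefschetz on the curves $E(3)$ alone, and it bypasses Hypothesis~\ref{assumption:isomorphism} and the containment $\operatorname{im}(a)\subseteq\operatorname{im}(c)$ entirely; since Hypothesis~\ref{assumption:com-kahler} implies Hypothesis~\ref{assumption:isomorphism} anyway, nothing is lost, and your route is shorter and matches the statement of the proposition (which only assumes Hypothesis~\ref{assumption:com-kahler}) more faithfully. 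The naturality you flag as the ``main obstacle'' --- that cup product with the universal class commutes with the alternating restriction maps (functoriality) and with the alternating Gysin maps (projection formula, using that the K\"ahler classes on all strata are restrictions of a single class on $E$) --- is exactly the same commutativity the paper relies on when it declares its vertical arrows to be ``a product of Lefschetz operators using $\omega_{I}$,'' so you are on equal footing there; the only bookkeeping to add is that the summands you discard ($\mathrm{H}^{0}(E(5))$, $\mathrm{H}^{0}(E(6))$, $\mathrm{H}^{2}(E(5))$, etc.) vanish because $E(k)=\emptyset$ for $k\ge 5$ in a four-dimensional total space, alongside $\mathrm{H}^{k}(E(3))=0$ for $k\ge 3$ and $\mathrm{H}^{2}(E(4))=0$, which you already note in part.
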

\begin{proof}
Since \(\mathrm{H}^{5}(X;\mathbb{C})=0\), we have
\(\mathrm{Gr}_{4}^{\mathcal{W}(M)}\mathrm{H}^{5}(X;\mathbb{C})=0\), i.e.~the sequence
\begin{eqnarray*}
\mathrm{H}^{4}(E(1);\mathbb{C})\oplus\mathrm{H}^{2}(E(3);\mathbb{C})
\to \mathrm{H}^{4}(E(2);\mathbb{C})\to 0
\end{eqnarray*}
is exact at the middle. 

Let \(\omega_{I}\) be the K\"{a}hler form on \(E_{I}\) coming from the
restriction of the universal class
in \(\mathrm{H}^{2}(E;\mathbb{R})\) under Hypothesis \ref{assumption:com-kahler}.
Consider the commutative diagram
\begin{equation}
\begin{tikzcd}
&\mathrm{H}^{2}(E(1);\mathbb{C})\oplus\mathrm{H}^{0}(E(3);\mathbb{C})
\ar[r,"a"]\ar[d] &\mathrm{H}^{2}(E(2);\mathbb{C})\oplus\mathrm{H}^{0}(E(4);\mathbb{C})\ar[d,"L"]\\
&\mathrm{H}^{4}(E(1);\mathbb{C})\oplus\mathrm{H}^{2}(E(3);\mathbb{C})
\ar[r] &\mathrm{H}^{4}(E(2);\mathbb{C}).
\end{tikzcd}
\end{equation}
In this diagram, 
\begin{itemize}
\item the top row is from the 
spectral sequence computing \(\mathrm{Gr}_{2}^{\mathcal{W}(M)}\mathrm{H}^{3}(X;\mathbb{C})\);
\item the bottom row is from the 
spectral sequence computing \(\mathrm{Gr}_{4}^{\mathcal{W}(M)}\mathrm{H}^{5}(X;\mathbb{C})\);
\item the vertical maps are give by a product of Lefschetz operators using \(\omega_{I}\).
\end{itemize}
From the discussion above, the bottom arrow is surjective. 
Moreover, the left vertical arrow is an isomorphism.
Thus 
\begin{eqnarray*}
\left.L\right|_{\operatorname{Im}(a)}
\colon \operatorname{Im}(a)\to \mathrm{H}^{4}(E(2);\mathbb{C})
\end{eqnarray*}
is also a surjection. In particular, if 
\begin{eqnarray*}
[\gamma,\delta]\in \mathrm{Gr}_{2}^{\mathcal{W}(M)}\mathrm{H}^{3}(X;\mathbb{C})
\end{eqnarray*}
and \((\gamma,\delta)\in 
\mathrm{H}^{2}(E(2);\mathbb{C})\oplus\mathrm{H}^{0}(E(4);\mathbb{C})\) is any representative,
we can use elements in \(\operatorname{Im}(a)\) to modify \((\gamma,\delta)\)
such that \(L(\gamma,\delta)=0\), i.e.
\begin{eqnarray*}
\gamma\in\mathrm{H}^{2}_{\mathrm{prim}}(E(2);\mathbb{C}).
\end{eqnarray*}
Next, under the Hypothesis \ref{assumption:isomorphism}, that is, the identity map
\begin{equation*}
\begin{tikzcd}[column sep=1em]
&\mathrm{H}^{0}(E(3);\mathbb{C})\ar[r,"c"] 
&\mathrm{H}^{2}(E(2);\mathbb{C})\oplus\mathrm{H}^{0}(E(4);\mathbb{C})\ar[r,"d"]\ar[d,"\mathrm{id}"]
&\mathrm{H}^{4}(E(1);\mathbb{C})\oplus\mathrm{H}^{2}(E(3);\mathbb{C})\\
&\mathrm{H}^{2}(E(1);\mathbb{C})\oplus\mathrm{H}^{0}(E(3);\mathbb{C})\ar[r,"a"]
&\mathrm{H}^{2}(E(2);\mathbb{C})\oplus\mathrm{H}^{0}(E(4);\mathbb{C})\ar[r,"b"]
&\mathrm{H}^{2}(E(3);\mathbb{C})
\end{tikzcd}
\end{equation*}
induces an isomorphism between 
\(\operatorname{ker}(d)\slash\operatorname{im}(c)\)
and \(\operatorname{ker}(b)\slash\operatorname{im}(a)\),
it follows that \(\operatorname{im}(a)\subseteq \operatorname{im}(c)\).
For 
\([\alpha,\beta]\in\mathrm{Gr}_{4}^{\mathcal{W}(M)}\mathrm{H}^{3}(X;\mathbb{C})\), if 
\((\alpha,\beta)\in \mathrm{H}^{2}(E(2);\mathbb{C})\oplus\mathrm{H}^{0}(E(4);\mathbb{C})\)
is a representative, we can use elements in \(\operatorname{im}(a)
\subseteq\operatorname{im}(c)\)
to adjust \((\alpha,\beta)\) so that 
\(\alpha\in \mathrm{H}^{2}_{\mathrm{prim}}(E(2);\mathbb{C})\) as desired.
\end{proof}

Now we can state our main result in this section.
\begin{theorem}
\label{theorem:polarized-hs}
Let \(\mathcal{X}\to\Delta\) be a finite distance and semi-stable degeneration
of smooth CY \(\partial\bar{\partial}\)-threefolds.
Let \(E\) be the central fiber
and \(X\) be a general fiber sufficiently close to \(E\). 
%Assume that \(\mathrm{H}^{1}(E(1);\mathbb{C})=0\)
Under Hypothesis \ref{assumption:com-kahler}, 
the Hodge structure on \(\mathrm{H}^{3}(X;\mathbb{C})\) is polarized.
\end{theorem}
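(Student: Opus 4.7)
The plan is to follow the strategy sketched in the introduction (after C.~Li) by analysing Steenbrink's limiting mixed Hodge structure on \(\mathrm{H}^{3}(X;\mathbb{C})\) via Deligne's splitting and then transporting the result to the nearby fibre through Deligne's canonical extension. Finite distance gives \(Na_{0}=0\), and the text has already noted that under Hypothesis \ref{assumption:isomorphism} this forces \(N^{2}=0\) on \(\mathrm{H}^{3}(X;\mathbb{C})\); only the graded pieces \(\mathrm{Gr}_{k}^{\mathcal{W}(M)}\mathrm{H}^{3}(X;\mathbb{C})\) for \(k\in\{2,3,4\}\) are non-zero, and \(N\) restricts to an isomorphism \(\mathrm{Gr}_{4}^{\mathcal{W}(M)}\xrightarrow{\simeq}\mathrm{Gr}_{2}^{\mathcal{W}(M)}\). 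I would then take Deligne's splitting \(\mathrm{H}^{3}(X;\mathbb{C})=\bigoplus I^{p,q}\) and choose bases for each non-zero piece with $p\ge 2$, whose union forms a basis of \(\mathcal{F}^{2}_{\mathrm{lim}}\). The key structural input is that the generators of \(I^{3,0}\) and \(I^{2,1}\) (which sit in \(\mathcal{W}(M)_{3}\)) are annihilated by $N$, while those of \(I^{3,1}\) and \(I^{2,2}\) are sent isomorphically by $N$ onto the corresponding weight-$2$ pieces.

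Second, following Section \ref{subsec:deligne-ext}, I would lift each basis vector \(u_{\alpha}\) to a local holomorphic section of Deligne's canonical extension \(\bar{\mathcal{V}}\) of the flat bundle $R^{3}f_{\ast}\mathbb{C}$ over \(\Delta^{\ast}\), and then untwist by \(e^{zN}\) to produce a multi-valued holomorphic frame \(\{\Omega_{z,\alpha}=e^{zN}(u_{\alpha}+\mathrm{O}(t))\}\) of \(\mathcal{F}^{2}_{t}\) over the universal cover \(\mathfrak{h}\to\Delta^{\ast}\). The simplification is that \(N\) vanishes on the $F^{3}$-part, so all genuinely $y$-dependent behaviour of the frame is concentrated in the contributions from $I^{3,1}$ and $I^{2,2}$ — the very pieces whose polarization is governed by the pairing \(\sqrt{-1}\langle u,N\bar{v}\rangle\) on \(\mathrm{Gr}_{4}^{\mathcal{W}(M)}\).

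Third, I would compute the hermitian form $H(u,v)=Q(Cu,\bar{v})$ in this frame, using the identity $Q(e^{zN}\xi,\overline{e^{zN}\eta})=Q(e^{2\sqrt{-1}yN}\xi,\bar{\eta})$ that appeared in the finite-distance lemma. This breaks the pairing into three qualitatively different contributions: pairings internal to the weight-\(3\) generators, positive by the polarized pure Hodge structure furnished by Proposition \ref{prop:finite-distance-polarized}; pairings involving the weight-\(4\) generators, which after the $e^{2\sqrt{-1}yN}$-twist produce a leading term proportional to $y\cdot\sqrt{-1}\langle u',N\bar{u}'\rangle$ with \(u'\in\mathrm{Gr}_{4}^{\mathcal{W}(M)}\), strictly positive by the Remark preceding Proposition \ref{prop:finite-distance-polarized}; and cross-terms that either vanish by the orthogonality $Q(\mathcal{W}(M)_{j},\mathcal{W}(M)_{k})=0$ whenever $j+k<6$, or lie in the class \(\mathbf{h}\) and are thus dominated by the first two contributions for $y\gg 0$.

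The principal obstacle is bookkeeping the off-diagonal terms so that the polarization data on each individual graded piece assembles into genuine positivity of $H$ on \(\mathrm{H}^{2,1}(X)\). Because Deligne's splitting only satisfies the symmetry \eqref{eq:deligne-splitting} modulo lower-weight summands, passing from ``graded polarization'' to ``polarization on the total space'' requires tracking these error terms carefully and absorbing them into the exponentially decaying \(\mathbf{h}\)-class functions once $t$ is sufficiently close to $0$. Once this is done, combining with the automatic positivity of $\sqrt{-1}\int_{X}\Omega_{t}\wedge\bar{\Omega}_{t}$ on the one-dimensional \(\mathrm{H}^{3,0}(X)\) yields the second Hodge--Riemann bilinear relation on the whole of \(\mathrm{H}^{3}(X;\mathbb{C})\), which is the required polarization.
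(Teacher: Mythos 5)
Your proposal follows essentially the same route as the paper: construct real bases adapted to Deligne's splitting of the limiting mixed Hodge structure, extend them to a frame of the canonical extension, untwist by \(e^{zN}=1+zN\), and deduce positivity of the Gram matrix of \(Q(C\cdot,\bar{\cdot})\) for \(\operatorname{Im}z\gg 0\) from the two graded inputs you name (the polarized pure Hodge structure on \(\mathrm{Gr}_{3}^{\mathcal{W}(M)}\) and the positivity of \(\sqrt{-1}\langle u,N\bar v\rangle\) on \(\mathrm{Gr}_{4}^{\mathcal{W}(M)}\)). One cosmetic remark: in this finite-distance, strict CY setting \(Na_{0}=0\) forces \(I^{3,1}=0\), so the only weight-\(4\) contribution to \(\mathcal{F}^{2}_{\mathrm{lim}}\) is \(I^{2,2}\); the \(I^{3,1}\)-piece you mention only appears in the Hashimoto--Sano analysis later in the paper.

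There is, however, one concrete misstep in your treatment of the off-diagonal block. You assert that the cross-terms between the weight-\(3\) frame vectors \(u'_{p}\) and the weight-\(4\) frame vectors \(v'_{r}\) ``either vanish by the orthogonality \(Q(\mathcal{W}(M)_{j},\mathcal{W}(M)_{k})=0\) for \(j+k<6\), or lie in \(\mathbf{h}\).'' Neither alternative covers the main contribution: a term such as \(Q(\beta_{i},\bar{\gamma}_{s})\) pairs \(W_{3}\) against \(W_{4}\), where \(3+4\ge 6\), so the weight orthogonality gives nothing, and it is a genuine constant, not an exponentially decaying function of \(y\). If the off-diagonal block really were \(o(1)\), positivity would be immediate from the two diagonal blocks; since it is merely \(O(1)\), one needs the extra observation that the \(\mathrm{Gr}_{4}\)-block grows like \(2\operatorname{Im}(z)\,Q\) with \(Q\) positive definite, so that its inverse is \(O(1/\operatorname{Im}z)\) and the Schur complement \(A(z)-C(z)D(z)^{-1}B(z)\) converges to the positive definite limit matrix \((a^{\infty}_{p\bar q})\). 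This is exactly how the paper closes the argument, and it is the step your ``dominated for \(y\gg 0\)'' claim silently relies on; with that substitution your outline becomes the paper's proof.
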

The rest of this section is devoted to demonstrate 
Theorem \ref{theorem:polarized-hs}. We shall closely follow Li's
idea in \cite{2022-Li-polarized-hodge-structures-for-clemens-manifolds}.
Consider Deligne's splitting \(I^{p,q}\) for 
Steenbrink's limiting mixed Hodge structure 
on \(\mathrm{H}^{3}(X;\mathbb{C})\) (cf.~Theorem \ref{theorem:steenbrink-mhs}).
Firstly, we construct a basis for each \(I^{p,q}\) and hence
obtain a basis for \(\mathcal{F}_{\mathrm{lim}}^{2}\). 
Secondly, we extend the proceeding basis to become a local frame 
for Deligne's canonical extension. Finally we untwist the local frame by \(e^{zN}\)
to get a (multi-valued) frame for \(\mathcal{F}^{2}_{t}\).
We then check the positivity using the multi-valued frame.

\subsection{A construction of the basis for the canonical extension}

Let the notations be as before. We construct a basis for 
\(\mathcal{F}_{\mathrm{lim}}^{2}\) as follows. Recall that we have
\begin{equation}
W_{3}=W_{2}\oplus \left(\bigoplus_{p=0}^{3} I^{p,3-p}\right).
\end{equation}
Notice that both \(W_{2}\) and \(\bigoplus_{p=0}^{3} I^{p,3-p}\)
are complex vector spaces with real stucture
coming from the complex conjugation on \(H\).
Indeed, by 
\begin{equation}
I^{p,q} = \overline{I^{q,p}} \mod{\bigoplus_{0\le r<p,~0\le s<q} I^{r,s}},
\end{equation}
it follows that \(W_{2} = I^{1,1} = \overline{I^{1,1}}=\overline{W_{2}}\) and 
\(V = \overline{V}\) for \(V:=\bigoplus_{p=0}^{3} I^{p,3-p}\).
Consequently, we can pick a \emph{real} basis
\(\{\alpha_{1},\ldots,\alpha_{m}\}\) for \(W_{2}\) and 
\(\{\beta_{1},\ldots,\beta_{h+1},\beta_{h+2},\ldots,\beta_{2h+2}\}\) 
for \(V\). Now consider \(I^{1,1}\oplus I^{2,2}\). 
This is again a complex vector space with real structure
by \eqref{eq:deligne-splitting} and therefore it also 
admits a real basis. We may extend the basis \(\{\alpha,\ldots,\alpha_{m}\}\)
to become a real basis 
\(\{\alpha_{1},\ldots,\alpha_{m},\gamma_{1},\ldots,\gamma_{m}\}\) 
for \(I^{1,1}\oplus I^{2,2}\). Also, there are complex numbers
\(x_{kl}\in\mathbb{C}\) with \(1\le k,l\le m\) such that
\begin{equation}
\delta_{k}:=\gamma_{k}-\sum_{l=1}^{m} x_{kl}\alpha_{l}\in I^{2,2}.
\end{equation}
Under the Hypothesis \ref{assumption:isomorphism}, we 
may further assume that \(N(\delta_{k})=N(\gamma_{k})=\alpha_{k}\).

\begin{figure}
\includegraphics[scale=0.55]{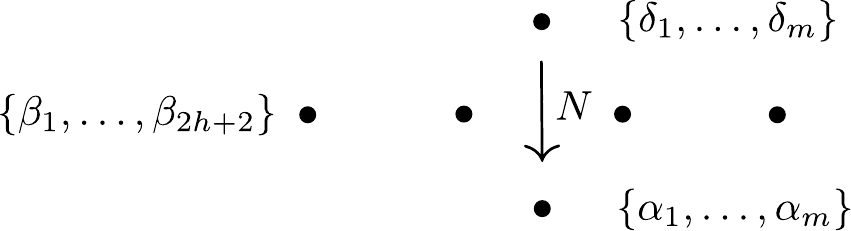}
\caption{The limiting mixed Hodge diamond for the
middle cohomology group \(\mathrm{H}^{3}(X;\mathbb{C})\) and the bases for
the corresponding subspaces \(I^{p,q}\).}
\label{fig:mixed-hodge}
\end{figure}

\subsection*{Notation}
Let us fix the notation for the later use.
\begin{itemize}
\item Let \(\{u_{1},\ldots,u_{h+1}\}\) be
a basis for \(I^{3,0}\oplus I^{2,1}\). We can
write \(u_{p}=\sum_{i=1}^{2h+2} b^{i}_{p} \beta_{i}\).
\item Let \(\{v_{1}=\delta_{1},\ldots,v_{m}=\delta_{m}\}\) be a basis for \(I^{2,2}\). 
\end{itemize}
Since \(e^{-zN}\mathcal{F}^{2}\) is a holomorphic subbundle in 
Deligne's canonical extension, we may extend
\begin{equation}
\{u_{1},\ldots,u_{h+1},v_{1},\ldots,v_{m}\}
\end{equation}
to become a local frame for \(e^{-zN}\mathcal{F}^{2}\)
over \(\Delta\). Explicitly, we can write
\begin{align}
\begin{split}
u_{p}(t) &= \sum_{i=1}^{m} A^{i}_{p}(t) \alpha_{i}+
\sum_{i=1}^{2h+2} B^{i}_{p}(t) \beta_{i} + \sum_{i=1}^{m} C^{i}_{p}(t) \delta_{i},\\
v_{q}(t) &= \sum_{i=1}^{m} D^{i}_{q}(t) \alpha_{i}+
\sum_{i=1}^{2h+2} E^{i}_{q}(t) \beta_{i} + \sum_{i=1}^{m} F^{i}_{q}(t) \delta_{i}.
\end{split}
\end{align}
In the above expression, all the coefficients are \emph{holomorphic}.

In terms of the real basis
\(\{\alpha,\ldots,\alpha_{m},\beta_{1},\ldots,\beta_{2h+2},\gamma_{1},\ldots,\gamma_{m}\}\), 
we can write
\begin{align}
\begin{split}
u_{p}(t) &= \sum_{i=1}^{m} \left(A^{i}_{p}(t)-
\sum_{k=1}^{m}C_{p}^{k}(t)x_{ki}\right) \alpha_{i}+
\sum_{i=1}^{2h+2} B^{i}_{p}(t) \beta_{i} + 
\sum_{i=1}^{m} C^{i}_{p}(t) \gamma_{i},\\
v_{q}(t) &= \sum_{i=1}^{m} \left(D^{i}_{q}(t)-
\sum_{k=1}^{m}F_{q}^{k}(t)x_{ki}\right) \alpha_{i}+
\sum_{i=1}^{2h+2} E^{i}_{q}(t) \beta_{i} + \sum_{i=1}^{m} F^{i}_{q}(t) \gamma_{i}.
\end{split}
\end{align}
Also, these functions \(A^{i}_{p}(t),\ldots,F_{q}^{i}(t)\) satisfy the properties
\begin{itemize}
\item[(1)] \(A_{p}^{i}(t)\to 0\) and \(C_{p}^{i}(t)\to 0\) as \(t\to 0\);
\item[(2)] \(B_{p}^{i}(t)\to b_{p}^{i}\) as \(t\to 0\);
\item[(3)] \(D_{q}^{i}(t)\to 0\) and \(E_{q}^{i}(t)\to 0\) as \(t\to 0\);
\item[(4)] \(F_{q}^{i}(t)\to \delta_{q}^{i}\), the Kronecker delta, as \(t\to 0\);
\end{itemize}

Let \(\mathfrak{h}\to \Delta^{\ast}\) via \(z\mapsto t=\exp(2\pi\sqrt{-1}z)\) be the 
universal cover.
%Notice that \(z^{r}t^{s}\to 0\) as \(\operatorname{Im}(z)\to \infty\) for any 
%\(r,s\in\mathbb{Z}_{> 0}\). Indeed, it decays to zero exponentially
%with respect to \(\operatorname{Im}(z)\).
%For simplicity, we denote by 
%\(\mathbf{h}\) the class of functions on \(\mathbf{H}\) with this property
%and we shall write \(\phi\in\mathbf{h}\) when \(\phi\) is such a function. 
%It is not difficult to see that
%\(\mathbf{h}\) is closed under addition, multiplication, and
%taking complex conjugation. Finally, if \(\phi\in\mathbf{h}\),
%then \(z\phi\in\mathbf{h}\). Using this notation, we have
It is not hard to see that
\begin{lemma}
\label{lem:class-h}
We have \(A_{p}^{i},C_{p}^{i},D_{q}^{i},E_{q}^{i}\in\mathbf{h}\). Moreover,
\(F_{q}^{i}\in\mathbf{h}\) if \(i\ne q\).
\end{lemma}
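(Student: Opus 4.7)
The plan is straightforward: each coefficient function appearing in the lemma is, by construction, a holomorphic function of $t$ defined on $\Delta$ (since the $u_p(t), v_q(t)$ form a local frame for a holomorphic subbundle of Deligne's canonical extension, expanded in the fixed basis). Under the isomorphism $\mathfrak h \to \Delta^{\ast}$, $z \mapsto t = e^{2\pi\sqrt{-1}z}$, we are then viewing a holomorphic function of $t$ as a holomorphic function of $z$ on the upper half-plane.

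The key observation is the following: if $f(t)$ is holomorphic on $\Delta$ with $f(0)=0$, then the pullback $\tilde f(z) := f(e^{2\pi\sqrt{-1}z})$ belongs to $\mathbf{h}$. Indeed, writing $f(t) = t\,g(t)$ with $g$ holomorphic and bounded near $0$, we have $|\tilde f(z)| \le C\,|t| = C\,e^{-2\pi y}$ for $y \gg 0$. Moreover, $\tilde f$ is holomorphic in $z$, so the Cauchy--Riemann equations reduce any derivative $\partial_x^{r}\partial_y^{s}$ to a constant multiple of $\partial_z^{r+s}\tilde f$. By the chain rule, $\partial_z \tilde f = 2\pi\sqrt{-1}\,t\,f'(t)$, which again has the form $t$ times a holomorphic function of $t$; iterating, every $\partial_z^{k}\tilde f$ is of the form $t\,h_k(t)$ for some holomorphic $h_k$. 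Hence each derivative admits a bound $C_{r,s}\,e^{-2\pi y}$ for $y \gg 0$, so $\tilde f \in \mathbf{h}$.

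Given this observation, the lemma is immediate from the limits listed just before its statement. Namely, $A_p^{i}, C_p^{i}, D_q^{i}, E_q^{i}$ all tend to $0$ as $t\to 0$ by properties (1) and (3), so the corresponding holomorphic functions of $t$ vanish at $t=0$ and lie in $\mathbf{h}$ after pullback. For the function $F_q^{i}$, property (4) gives $F_q^{i}(0) = \delta_q^{i}$, so $F_q^{i}$ vanishes at $t=0$ precisely when $i \ne q$, which is exactly the condition in the lemma.

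There is essentially no genuine obstacle to this argument: the only thing to keep track of is that one is taking mixed derivatives in the real variables $x,y$, but holomorphicity in $z$ reduces everything to $\partial_z$ derivatives, and each such derivative preserves (or introduces) a factor of $t = e^{2\pi\sqrt{-1}z}$ that gives the required exponential decay. The mild bookkeeping of this fact is the only content of the proof.
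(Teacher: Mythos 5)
Your proof is correct and is essentially the argument the paper intends; the paper omits the proof entirely, prefacing the lemma only with ``It is not hard to see that.'' The key observation---that a holomorphic function on \(\Delta\) vanishing at \(t=0\) pulls back under \(t=e^{2\pi\sqrt{-1}z}\) to a function all of whose \(x,y\)-derivatives decay like \(e^{-2\pi y}\), since holomorphicity reduces every mixed derivative to a \(\partial_{z}\)-derivative, each of which retains a factor of \(t\)---is exactly what is needed, and you apply it correctly to the coefficient functions via properties (1), (3), and (4).
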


%As before, let \(\mathbf{H}\to \Delta^{\ast}\) via \(z\mapsto t=\exp(2\pi\sqrt{-1}z)\) be the 
%universal cover.
To obtain a frame of \(\mathcal{F}_{t}^{2}\) for \(|t|\ll 1\), we need to ``untwist''
the frame 
\(\{u_{p}(t),v_{r}(t)~|~p=1,\ldots,h+1~\mbox{and}~r=1,\ldots,m\}\)
and consider
the multi-valued sections
\begin{align}
\begin{split}
u'_{p}(t)&:=u_{p}(t)+zNu_{p}(t),\\
v'_{r}(t)&:=v_{r}(t)+zNv_{r}(t).
\end{split}
\end{align}
Then \(\{u'_{1}(t),\ldots,u'_{h+1}(t),
v'_{1}(t),\ldots,v'_{m}(t)\}\) is a (multi-valued) frame for \(\mathcal{F}^{2}_{t}\). Explicitly,
\begin{align}
\label{eq:basis-for-f2}
\begin{split}
u'_{p}(z)&=\sum_{i=1}^{m} \left(A^{i}_{p}(t)-\sum_{k=1}^{m}C_{p}^{k}(t)x_{ki}
+zC_{p}^{i}(t)\right) \alpha_{i}+
\sum_{i=1}^{2h+2} B^{i}_{p}(t) \beta_{i} + \sum_{i=1}^{m} C^{i}_{p}(t) \gamma_{i}\\
v'_{r}(z)&=\sum_{i=1}^{m} \left(D^{i}_{r}(t)-\sum_{k=1}^{m}F_{r}^{k}(t)x_{ki}
+zF_{r}^{i}(t)\right) \alpha_{i}+
\sum_{i=1}^{2h+2} E^{i}_{r}(t) \beta_{i} + \sum_{i=1}^{m} F^{i}_{r}(t) \gamma_{i}.
\end{split}
\end{align} 
We may and will further assume that 
\(z\) lies in a bounded vertical strip, i.e.~
\(\operatorname{Re}(z)\in[0,1]\).

Now we are ready to prove Theorem \ref{theorem:polarized-hs}.
\begin{proof}[Proof of Theorem \ref{theorem:polarized-hs}]
For CY \(\partial\bar{\partial}\)-threefolds, \(\mathrm{H}^{3,0}(X)\)
is always polarized by the sesquilinear 
pairing \(\tilde{Q}(\bullet,\overline{\bullet})\). Thus it
suffices to check
the second Hodge--Riemann bilinear relation on \(\mathrm{H}^{2,1}(X)\);
in other words, we have to check
\begin{eqnarray*}
\langle \sqrt{-1}u,\bar{u}\rangle:=-\sqrt{-1}\int_{X} u\cup \bar{u}\ge 0~\mbox{for}~
u\in\mathrm{H}^{2,1}(X)
\end{eqnarray*}
and the equality holds if and only if \(u=0\).

Given a degeneration \(\mathcal{X}\to\Delta\) as above, we constructed 
a basis for \(\mathcal{F}_{t}\) with \(t\ne 0\).
Thus it suffices to show that the hermitian matrix with respect to
the basis \(\{u'_{i}(z),v'_{j}(z)\}\) is positive definite, i.e.~the matrix
\begin{eqnarray}
\label{eq:matrix}
\begin{bmatrix}
M(u'_{p}(z),\bar{u}'_{q}(z)) & M(u'_{p}(z),\bar{v}'_{s}(z))\\
M(v'_{r}(z),\bar{u}'_{q}(z)) & M(v'_{r}(z),\bar{v}'_{s}(z))
\end{bmatrix}
\end{eqnarray}
is positive definite for \(\operatorname{Im}z\gg 0\). 
Here \(1\le p,q\le h+1\), \(1\le r,s\le m\), and
\begin{eqnarray*}
M(u,v):=\langle \sqrt{-1}u, v\rangle = -\sqrt{-1}\int_{X} u\cup v.
\end{eqnarray*}
Now under our hypothesis, we know that the limit 
\begin{eqnarray*}
a_{p,\bar{q}}^{\infty}:=\lim_{\operatorname{Im}z\to\infty}M(u'_{p}(z),\bar{u}'_{q}(z))
\end{eqnarray*}
exists for all \(1\le p,q\le h+1\). Moreover, 
the matrix \((a_{p,\bar{q}}^{\infty})_{1\le p,q\le h+1}\)
is positive definite since 
\(\mathrm{Gr}_{3}^{\mathcal{W}(M)}\mathrm{H}^{3}(X;\mathbb{C})\) carries
a polarized Hodge structure.
Consequently, the matrix
\begin{eqnarray*}
(a_{p\bar{q}}(z)=M(u'_{p}(z),\bar{u}'_{q}(z)))_{1\le p,q\le h+1}
\end{eqnarray*}
is positive definite when \(\operatorname{Im}(z)\gg 0\).
On the other hand, we have
\begin{eqnarray*}
Q(\alpha_{r},\alpha_{s})=Q(\alpha_{i},\beta_{j})=0~\mbox{for any}~r,s,i,j.
\end{eqnarray*}
Also, since \(\delta_{k}=\gamma_{k}-\sum_{l=1}^{m}x_{kl}\alpha_{l}\), from
\(Q(\delta_{r},N\delta_{s})=
Q(\delta_{r},\alpha_{s})=Q(\gamma_{r},\alpha_{s})\) and
\begin{eqnarray*}
Q(\gamma_{r},\alpha_{s})=Q(\gamma_{r},N\gamma_{s})=
-Q(N\gamma_{s},\gamma_{r})=Q(-N\gamma_{s},\gamma_{r})
=Q(\gamma_{s},N\gamma_{r})=Q(\gamma_{s},\alpha_{r}),
\end{eqnarray*}
we see that the real symmetric matrix \((q_{rs}=
Q(\delta_{r},\alpha_{s}))_{1\le r,s\le m}\) is 
positive definite, because the sesquilinear form
\(Q(u,N\bar{v})\) (where \(u,v\in 
\mathrm{Gr}_{4}^{\mathcal{W}(M)}\mathrm{H}^{3}(X;\mathbb{C})\))
is positive definite.

We compute
\begin{align*}
M(v'_{r}(z),\bar{v}'_{s}(z))&=2\operatorname{Im}(z) q_{rs} + P_{r\bar{s}} + H_{r\bar{s}}(z)
\end{align*}
for some constant \(P_{r\bar{s}}\) and some function \(H_{r\bar{s}}(z)\in\mathbf{h}\). 
It follows that the matrix
\begin{eqnarray*}
(d_{r\bar{s}} (z)=M(v'_{r}(z),\bar{v}'_{s}(z))=2\operatorname{Im}(z) 
(q_{rs}) + (P_{r\bar{s}}) + (H_{r\bar{s}}(z))
\end{eqnarray*}
is also positive definite when \(\operatorname{Im}(z)\gg 0\).

Put \(b_{p\bar{s}}(z)=M(u'_{p}(z),\bar{v}'_{s}(z))\)
and \(c_{r\bar{q}}(z)=M(v'_{r}(z),\bar{u}'_{q}(z))\). Then
\begin{eqnarray*}
\eqref{eq:matrix}=
\begin{bmatrix}
A(z) & B(z) \\
C(z) & D(z)
\end{bmatrix}\sim
\begin{bmatrix}
A(z)-C(z)D(z)^{-1}B(z) & \mathbf{0}_{(h+1)\times m} \\
\mathbf{0}_{m\times (h+1)} & D(z)
\end{bmatrix}.
\end{eqnarray*}
Let us investigate the inverse \(D(z)^{-1}\). Write
\begin{eqnarray*}
D(z) = 2\operatorname{Im}(z)\left(Q+\frac{P+H(z)}{2\operatorname{Im}(z)}\right)~
\mbox{with}~Q=(q_{r\bar{s}}).
\end{eqnarray*}
Since \(Q\) is invertible and \((2\operatorname{Im}(z))^{-1}(P+H(z))\to 0\)
as \(\operatorname{Im}(z)\to \infty\), we conclude that
\begin{eqnarray*}
D(z)^{-1}=\frac{1}{2\operatorname{Im}(z)}\left(Q^{-1}+G(z)\right)
\end{eqnarray*}
for some matrix \(G(z)\) with \(G(z)\to \mathbf{0}_{m\times m}\) as
\(\operatorname{Im}(z)\to\infty\).
Notice that the entries in \(C(z)\) and \(D(z)\) are all bounded.
We infer that
\begin{eqnarray*}
A(z)-C(z)D(z)^{-1}B(z)\to (a_{p\bar{q}}^{\infty})~\mbox{as}~\operatorname{Im}(z)\to\infty.
\end{eqnarray*}
In particular, \(A(z)-C(z)D(z)^{-1}B(z)\)
is positive definite for \(\operatorname{Im}(z)\gg 0\).
This shows that
\begin{eqnarray*}
\begin{bmatrix}
M(u'_{p}(z),\bar{u}'_{q}(z)) & M(u'_{p}(z),\bar{v}'_{s}(z))\\
M(v'_{r}(z),\bar{u}'_{q}(z)) & M(v'_{r}(z),\bar{v}'_{s}(z))
\end{bmatrix}
\end{eqnarray*}
is positive definite whenever \(\operatorname{Im}(z)\gg 0\). The proof is completed.
\end{proof}

\subsection{The \(\partial\bar{\partial}\)-lemma for finite distance degenerations}
We have constructed a basis for 
\(\mathcal{F}^{2}_{t}\) with \(t\ne 0\) in \eqref{eq:basis-for-f2}. 
It turns out that we are able to use this basis to prove that 
the \(\partial\bar{\partial}\)-lemma holds for
small smoothings.

Let \(f\colon\mathcal{X}\to\Delta\) be a semi-stable degeneration
and \(E:=f^{-1}(0)\) be the central fiber as before. 

\begin{proposition}
Assuming Hypothesis \ref{assumption:isomorphism}, then \(\mathcal{F}^{2}_{t}\cap 
\overline{\mathcal{F}^{2}_{t}}=(0)\) for \(t\) small.
\end{proposition}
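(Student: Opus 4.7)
The plan is to verify, by a direct asymptotic computation, that the $2(h+1+m)$ vectors
\[
\{u'_p(z),\,v'_r(z),\,\overline{u'_p(z)},\,\overline{v'_r(z)}\}
\]
are linearly independent in $\mathrm{H}^3(X;\mathbb{C})$ for $\operatorname{Im}(z) \gg 0$. Since Hypothesis~\ref{assumption:isomorphism} (together with the preceding discussion of the Deligne splitting) yields $\dim \mathrm{H}^3(X;\mathbb{C})=2(h+1+m)$ and $\dim \mathcal{F}^2_t=h+1+m$, linear independence is equivalent to the transversality $\mathcal{F}^2_t+\overline{\mathcal{F}^2_t}=\mathrm{H}^3(X;\mathbb{C})$, hence to the claim $\mathcal{F}^2_t\cap\overline{\mathcal{F}^2_t}=(0)$.

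I would write out the matrix $M(z)$ whose columns are those $2(h+1+m)$ vectors expressed in the real basis $\{\alpha_i,\beta_j,\gamma_k\}$, reading off entries directly from \eqref{eq:basis-for-f2}, and extract the leading behaviour as $y=\operatorname{Im}(z)\to\infty$. By Lemma~\ref{lem:class-h} and properties~(1)--(4), each of $A^i_p,C^i_p,D^i_r,E^i_r$ lies in $\mathbf{h}$ and decays exponentially in $y$ even after multiplication by $z$ or $\bar z$ (linear growth is killed by exponential decay), while $B^j_p\to b^j_p$ and $F^k_r\to \delta^k_r$ (Kronecker delta). Organising the matrix with row-blocks $\alpha/\beta/\gamma$ and column-blocks $u'/v'/\overline{u'}/\overline{v'}$, one finds
\[
M(z)\;=\;M_\infty(z)+E(z),\qquad M_\infty(z)=\begin{pmatrix} 0 & zI-X^T & 0 & \bar zI-\overline{X}^T\\ B & 0 & \overline{B} & 0\\ 0 & I & 0 & I\end{pmatrix},
\]
where $B=(b^j_p)$, $X=(x_{ri})$, and the error $E(z)$ has entries that are either exponentially small in $y$ or of the form $z\cdot h$, $\bar z\cdot h$ with $h\in\mathbf{h}$.

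The leading determinant falls out of a short block manipulation. Subtracting the $\overline{v'}$-column block from the $v'$-column block replaces the upper entry of the $v'$-block by $2iy\,I-2i\operatorname{Im}(X^T)$ and zeroes out its $\gamma$-entry; expanding along the now nearly empty $\gamma$-row block (only an identity in the $\overline{v'}$-position) and permuting the remaining column blocks yields
\[
\det M_\infty(z)\;=\;\pm\det\!\bigl(2iy\,I-2i\operatorname{Im}(X^T)\bigr)\cdot\det\!\bigl(B\mid\overline{B}\bigr).
\]
The first factor equals $(2i)^m y^m+O(y^{m-1})$, nonzero for $y\gg 0$. The second factor is nonzero because the columns of $B$ span $I^{3,0}\oplus I^{2,1}$ in the $\beta$-basis and those of $\overline{B}$ span $I^{0,3}\oplus I^{1,2}$, the two subspaces being complementary in $V=\bigoplus_{p+q=3}I^{p,q}$ by the exact identity $\overline{I^{p,q}}=I^{q,p}$, which holds in our setup since there are no $I^{r,s}$ of weight below $3$ with $r<q$ and $s<p$.

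The principal obstacle is ensuring that the error $E(z)$ contributes only to subleading terms of $\det M(z)$. Each entry of $E(z)$ decays exponentially in $y$ after at most one factor of $z$ or $\bar z$, while $\det M_\infty(z)$ grows polynomially of order $y^m$; a routine cofactor expansion then gives $\det M(z)=\det M_\infty(z)\,(1+o(1))$ as $y\to\infty$. This yields the nonvanishing of $\det M(z)$ for all sufficiently small $t$ and completes the proof.
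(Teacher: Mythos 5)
Your proposal is correct and follows essentially the same route as the paper: the paper shows that the top wedge product \(\bigwedge_i u'_i\wedge\overline{u'_i}\wedge\bigwedge_j v'_j\wedge\overline{v'_j}\) is nonzero by extracting the leading term \(\prod_{i}(2\sqrt{-1}\operatorname{Im}(z)-2\sqrt{-1}\operatorname{Im}(x_{ii}))\) times the nonzero \(\beta\)-block constant, which is exactly your determinant computation \(\det M(z)=\det M_\infty(z)\,(1+o(1))\) phrased in wedge-product form. The dimension count, the use of Lemma \ref{lem:class-h}, and the two nonvanishing factors (the \(y^{m}\) growth coming from the \(I^{2,2}\)-directions and the complementarity \(\overline{I^{3,0}\oplus I^{2,1}}=I^{0,3}\oplus I^{1,2}\)) all coincide with the paper's argument.
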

\begin{proof}
To prove that \(\mathcal{F}^{2}_{t}\cap 
\overline{\mathcal{F}^{2}_{t}}=(0)\), it now suffices to show that 
\begin{equation}
\bigwedge_{i=1}^{h+1} u'_{i}(t)\wedge\overline{u'_{i}(t)}\wedge
\bigwedge_{i=1}^{m} v'_{i}(t)\wedge\overline{v'_{i}(t)}\ne 0
\end{equation}
where \(u'_{i}(t)\) and \(v'_{j}(t)\) are given in \eqref{eq:basis-for-f2}.
More accurately, using the fact \(N(\alpha_{i})=N(\beta_{i})=0\)
and \(N(\delta_{i})=N(\gamma_{i})=\alpha_{i}\), we get
\begin{align*}
\begin{split}
u'_{p}(t)&=\sum_{i=1}^{m} \left(A^{i}_{p}(t)-\sum_{k=1}^{m}C_{p}^{k}(t)x_{ki}
+zC_{p}^{i}(t)\right) \alpha_{i}+
\sum_{i=1}^{2h+2} B^{i}_{p}(t) \beta_{i} + \sum_{i=1}^{m} C^{i}_{p}(t) \gamma_{i}\\
v'_{q}(t)&=\sum_{i=1}^{m} \left(D^{i}_{q}(t)-\sum_{k=1}^{m}F_{q}^{k}(t)x_{ki}
+zF_{q}^{i}(t)\right) \alpha_{i}+
\sum_{i=1}^{2h+2} E^{i}_{q}(t) \beta_{i} + \sum_{i=1}^{m} F^{i}_{q}(t) \gamma_{i}.
\end{split}
\end{align*}

Using Lemma \ref{lem:class-h}, we conclude
\begin{equation}
\label{eq:dominant-term-in-middle}
\bigwedge_{i=1}^{h+1} u'_{i}(t)\wedge\overline{u'_{i}(t)}=
c\cdot\beta_{1}\wedge\cdots\wedge\beta_{2h+2} + \sum_{J} \phi^{J}(z)\omega_{J}.
\end{equation}
In the above displayed equation, \(c\) is a non-zero constant, \(\omega_{J}\)
is a \(2h+2\) form and \(\phi^{J}\in\mathbf{h}\).
The constant \(c\) is non-zero follows from the fact that 
\(\{u_{1},\ldots,u_{h+1}\}\) is a basis for \(I^{3,0}\oplus I^{2,1}\) and 
\(I^{0,3}=\overline{I^{3,0}}\) and \(I^{1,2}=\overline{I^{2,1}}\).

Observe that by the construction of our basis we have for each \(p\) and \(i\)
\begin{equation}
F_{q}^{i}(t)=\delta_{q}^{i}+G_{q}^{i}(t)
\end{equation}
for some holomorphic function \(G_{p}^{i}(t)\) with 
\(G_{p}^{i}\in\mathbf{h}\). 
Now we can re-write
\begin{align*}
v_{q}'(t)=(z-x_{qq})\alpha_{q}+\sum_{i=1}^{m}\phi^{i}_{q}(z)\alpha_{i}
+\sum_{i=1}^{2h+2} E^{i}_{q}(t) \beta_{i} + \sum_{i=1}^{m} 
\psi_{q}^{i}(z) \gamma_{i} + \gamma_{q}
\end{align*}
for some functions \(\phi_{q}^{i}(z),\psi_{q}^{i}(z)\in\mathbf{h}\). Therefore,
we have for each \(q\)
\begin{align}
\label{eq:dominant-2-2}
\begin{split}
v_{q}'(t)\wedge\overline{v_{q}'(t)} & = (z-\bar{z}-x_{qq}+\bar{x}_{qq})
\alpha_{q}\wedge\gamma_{q} + \sum_{J}\phi_{q}^{J}(z)\omega_{J}.
\end{split}
\end{align}
Here \(\omega_{J}\) are two forms other than \(\alpha_{q}\wedge\gamma_{q}\)
and \(\phi_{q}^{J}\in \mathbf{h}\).

Taking \eqref{eq:dominant-2-2} and \eqref{eq:dominant-term-in-middle}
into accounts, we see that
\begin{align}
\begin{split}
\bigwedge_{i=1}^{h+1} &u'_{i}(t)\wedge\overline{u'_{i}(t)}\wedge
\bigwedge_{i=1}^{m} v'_{i}(t)\wedge\overline{v'_{i}(t)}\\
&=\left(\prod_{i=1}^{m}(2\sqrt{-1}\operatorname{Im}(z)-2\sqrt{-1}
\operatorname{Im}(x_{ii}))+\phi(z)
\right)
\bigwedge_{i=1}^{m}\alpha_{i}\wedge\bigwedge_{i=1}^{2h+2}\beta_{i}\wedge
\bigwedge_{i=1}^{m}\gamma_{i}
\end{split}
\end{align}
with \(\phi\in\mathbf{h}\) and hence we conclude that it is non-zero as
long as \(\operatorname{Im}(z)\gg 0\). 
\end{proof}

\begin{corollary}
\label{cor:finite-distance-ddbar}
Let \(f\colon\mathcal{X}\to\Delta\) be as above.
We assume that
the general fiber \(X\) has 
complex dimension \(3\) and satisfies the following conditions.
\begin{itemize}
\item The central fiber of \(f\) is at a finite distance with respect to
the perid-map metric;
\item \(\mathrm{H}^{i}(X;\mathcal{O}_{X})=\mathrm{H}^{0}(X;\Omega_{X}^{j})=0\)
for \(1\le i,j\le 2\).
\end{itemize}
Then the \(\partial\bar{\partial}\)-lemma holds on \(X\).
\end{corollary}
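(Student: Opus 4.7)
The plan is to verify the characterization of the $\partial\bar{\partial}$-lemma recalled earlier: the Fr\"{o}licher spectral sequence of $X$ degenerates at $E_1$, and $F^{\bullet}$ is $k$-opposed to $\overline{F^{\bullet}}$ on $H^k(X;\mathbb{C})$ for each $k$. First, I would invoke the preceding proposition to get $\mathcal{F}^2_t \cap \overline{\mathcal{F}^2_t} = 0$. Deligne's canonical extension keeps $\dim \mathcal{F}^2_t = \dim \mathcal{F}^2_{\mathrm{lim}} = 1 + h + m$ constant, where $h := \dim I^{2,1}$ and $m := \dim I^{1,1}$ are read off from the Deligne splitting, and a direct count yields $\dim H^3(X;\mathbb{C}) = \sum_{p,q} \dim I^{p,q} = 2 + 2h + 2m$. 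Hence the intersection vanishing promotes to $H^3(X;\mathbb{C}) = \mathcal{F}^2_t \oplus \overline{\mathcal{F}^2_t}$, establishing opposedness at $(k,p) = (3,2)$.

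For opposedness at $(k,p) = (3,3)$, I would use a Stokes-theorem argument: since $F^3_t = \mathbb{C}\Omega_t$, it suffices to rule out $\Omega_t \in \overline{F^1_t}$. If $\Omega_t = \eta + d\alpha$ with $\eta$ having no $(3,0)$-component, then $d\bar\Omega_t = 0$ gives $\int_X \Omega_t \wedge \bar\Omega_t = \int_X \eta \wedge \bar\Omega_t$, and bidegree considerations force the right-hand side to vanish, contradicting the non-vanishing of the Calabi--Yau volume. This completes $k$-opposedness in weight $3$.

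For the other weights, the strictness assumptions $H^i(X;\mathcal{O}_X) = H^0(X;\Omega^j_X) = 0$ for $1 \le i, j \le 2$ together with Serre duality kill every $h^{p,q}(X)$ off the diagonal with $p+q \in \{1,2,4,5\}$. The Fr\"{o}licher inequality $b_k \le \sum h^{p,k-p}$ immediately yields $b_1 = b_5 = 0$, making weights $1$ and $5$ trivial. For weights $2$ and $4$, I would combine the Clemens--Schmid exact sequence with the $\partial\bar{\partial}$-property of the strata $E_I$ to compute $b_2 = b_4$ from $H^{\ast}(E_I)$ and identify these with $h^{1,1}(X) = h^{2,2}(X)$; since all Dolbeault cohomology here sits on the diagonal, Fr\"{o}licher degeneration and opposedness are automatic. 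In weight $3$, matching $\sum h^{p,3-p} = 2 + 2h^{2,1}$ against $b_3 = 2 + 2h + 2m$ forces $h^{2,1}(X) = h+m$, finishing Fr\"{o}licher degeneration.

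The main obstacle is identifying $b_2, b_4$ of the general fiber with the Dolbeault numbers $h^{1,1}(X), h^{2,2}(X)$, since the strictness hypothesis does not directly control these; the Clemens--Schmid sequence combined with the K\"{a}hlerness of the central-fiber strata closes the Fr\"{o}licher inequalities in a compatible way.
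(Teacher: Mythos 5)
There is a genuine gap in the step where you close the Fr\"{o}licher inequalities. The Fr\"{o}licher inequality only gives the lower bound \(\sum_{p+q=k}h^{p,q}(X)\ge b_{k}(X)\); to conclude \(E_{1}\)-degeneration you need the reverse inequalities, and neither the Clemens--Schmid sequence nor the K\"{a}hlerness of the strata \(E_{I}\) supplies them, because those tools compute topological invariants of the nearby fiber (Betti numbers, the limit mixed Hodge structure), not the Dolbeault cohomology of the non-K\"{a}hler general fiber. In particular the claim that Clemens--Schmid lets you ``identify \(b_{2},b_{4}\) with \(h^{1,1}(X),h^{2,2}(X)\)'' is unsupported, and the weight-\(3\) statement that matching \(2+2h^{2,1}\) against \(b_{3}=2+2h+2m\) ``forces'' \(h^{2,1}=h+m\) presupposes equality in Fr\"{o}licher, which is exactly what is to be proved. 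The missing ingredient is the upper bound \(h^{p,q}(\mathcal{X}_{t})\le\operatorname{rank}R^{q}f_{\ast}\Omega^{p}_{\mathcal{X}\slash\Delta}(\log E)\) coming from upper semicontinuity together with the local freeness of these sheaves (the corollaries to Theorem \ref{theorem:steenbrink-mhs}); since the sum of these ranks over \(p+q=k\) equals \(b_{k}\), this closes the inequalities and yields \(E_{1}\)-degeneration for the fiber. This is precisely what the paper invokes via \cite{2008-Peters-Steenbrink-mixed-Hodge-structures}*{Corollary 11.24}. You flag this point yourself as ``the main obstacle,'' but the proposed fix does not work as stated.

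Apart from that, your route is genuinely different from the paper's, which is a two-line citation: \(E_{1}\)-degeneration from Peters--Steenbrink, then Friedman's Corollary 1.6 applied with \(\mathcal{F}^{2}_{t}\cap\overline{\mathcal{F}^{2}_{t}}=(0)\) from the preceding proposition. What you are doing is essentially reproving Friedman's criterion: the dimension count \(\dim\mathcal{F}^{2}_{t}=1+h+m\) against \(b_{3}=2+2h+2m\) promoting the intersection vanishing to \(3\)-opposedness at \(p=2\), and the Stokes/bidegree argument ruling out \([\Omega_{t}]\in\overline{F^{1}_{t}}\) via \(\int_{X}\Omega_{t}\wedge\bar{\Omega}_{t}\ne 0\), are both correct and are in substance the content of Friedman's proof; likewise, once \(E_{1}\)-degeneration and the vanishing of the off-diagonal Hodge numbers in weights \(1,2,4,5\) are in place, opposedness there is automatic as you say. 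So the self-contained approach is viable and more transparent about where each hypothesis is used, but only after the semicontinuity/local-freeness input replaces the Clemens--Schmid step.
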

\begin{proof}
This follows immediately from \cite{2019-Friedman-the-ddbar-lemma-for-general-clemens-manifolds}*{Corollary 1.6}. Indeed, the Hodge-de Rham spectral sequence degenerates at \(E_{1}\)
in the present situation (cf.~\cite{2008-Peters-Steenbrink-mixed-Hodge-structures}*{Corollary 11.24}).
\end{proof}

\section{Proof of the \texorpdfstring{\(\partial\bar{\partial}\)}{ddbar}-lemma
for Hashimoto--Sano's examples}

We can use the basis similar to \eqref{eq:basis-for-f2}
to prove that the
smooth non-K\"{a}hler Calabi--Yau threefolds constructed by Hashimoto and Sano also
support the \(\partial\bar{\partial}\)-lemma. 
We will explain the details in this section.

Let us review the construction of non-K\"{a}hler 
Calabi--Yau threefolds given in \cite{2023-Hashimoto-Sano-examples-of-non-kahler-calabi-yau-3-folds-with-arbitrarily-large-b2}
and record the data needed later.
Given a positive integer \(a>0\), 
Hashimoto and Sano constructed 
a smooth non-K\"{a}hler Calabi--Yau threefold \(X(a)\)
as a smoothing of a certain simple
normal crossing variety \(X_{0}(a)\). Let
us now review their construction and recall some necessary details. Fix
a positive integer \(a\).
Let \(X_{2}:=\mathbf{P}^{1}\times\mathbf{P}^{1}\times\mathbf{P}^{1}\) and
\(S\) be a very general hypersurface of degree \((2,2,2)\) in \(X_{2}\).
Then \(S\) is a smooth \(K3\) surface
with Picard number \(3\). Indeed,
\begin{equation}
\operatorname{Pic}(S)\cong \mathbb{Z}h_{1}\oplus\mathbb{Z}h_{2}\oplus\mathbb{Z}h_{3}
\end{equation}
where \(h_{i}\) is the pullback of the line bundle 
\(\mathcal{O}_{\mathbf{P}^{1}}(1)\) via
the projection to the \(i\textsuperscript{th}\) factor.

Let \(S\to\mathbf{P}^{1}\) be the projection to the first factor
and let \(C_{1},\ldots,C_{a}\) be distinct smooth fibers, 
i.e.~\(C_{k}\in |\mathcal{O}_{S}(1,0,0)|\). Finally, let
\(X_{1}\) be a blow-up of \(\mathbf{P}^{1}\times\mathbf{P}^{1}\times\mathbf{P}^{1}\)
along the curves \(C_{1},\ldots,C_{a}\) and \(C\), where 
\(C\) is a smooth curve in the linear system
\begin{equation}
|\mathcal{O}_{S}(16a^{2}-a+4,4-8a,4+8a)|.
\end{equation}
Note that the blow-up morphism 
\(\mu\colon X_{1}\to \mathbf{P}^1\times\mathbf{P}^{1}\times\mathbf{P}^{1}\)
induces an isomorphic between \(S\) and its proper transform,
which will be denoted by \(S_{1}\).

The projection \(\pi_{ij}
\colon S\to \mathbf{P}^{1}\times\mathbf{P}^{1}\) to the
\(i\textsuperscript{th}\) and \(j\textsuperscript{th}\) factors
is a double cover and induces an order two automorphism \(\iota_{ij}\)
on \(S\). Denote by \(\iota:=\iota_{12}\circ\iota_{13}\)
and \(\iota^{a}\) be its \(a\textsuperscript{th}\) power.
Put 
\begin{equation}
\iota_{a}:=\left.\iota^{a}\circ \mu\right|_{S_{1}}\colon S_{1}\to S.
\end{equation}
We can construct the pushout of \(X_{1}\) and \(X_{2}\) by 
the automorphism
\begin{equation}
\iota_{a}\colon S_{1}\to S
\end{equation}
where \(S_{1}\subset X_{1}\) and \(S\subset X_{2}\)
are viewed as \emph{subvarieties}. The action of \(\iota_{a}\)
on \(\operatorname{Pic}(S)\) is given by the matrix
(under the ordered basis \(\{h_{1},h_{2},h_{3}\}\))
\begin{equation}
\begin{bmatrix}
1 & 4a^{2}-2a & 2a^{2}+2a\\
0 & 1-2a & -2a\\
0 & 2a & 1+2a
\end{bmatrix}.
\end{equation}

The pushout is a SNC variety consisting 
of two components whose intersection is \(S\). Let us denote 
the pushout by \(X_{0}\). It is proven that \(X_{0}\) admits a 
semistable smoothing, that is, there exists a one
parameter family \(\mathcal{X}\to \Delta\) such that
\begin{itemize}
\item the central fiber is isomorphic to \(X_{0}\);
\item the total space \(\mathcal{X}\) is smooth;
\item the general fiber is smooth.
\end{itemize}

Let \(X\equiv X(a)\) be the smooth fiber. 
Using the semistable model, one can construct a limiting 
mixed Hodge structure on \(\mathrm{H}^{k}(X;\mathbb{C})\)
via the relative logarithmic de Rham complex. 
It is known that the Hodge--de Rham spectral sequence degenerates at \(E_{1}\) page
(cf.~\cite{2008-Peters-Steenbrink-mixed-Hodge-structures}).
In \cite{2023-Hashimoto-Sano-examples-of-non-kahler-calabi-yau-3-folds-with-arbitrarily-large-b2}, 
Hashimoto and Sano investigated 
the variety \(X\) in great detail and 
proved several results concerning \(X\). We summarize them as follows.

\begin{proposition}
\(X\) is simply-connected with \(b_{1}(X)=0\) and \(b_{2}(X)=a+3\). In addition, we have
the following vanishing
\begin{equation}
\mathrm{H}^{1}(X;\mathcal{O}_{X})=\mathrm{H}^{0}(X;\Omega_{X})
=\mathrm{H}^{2}(X;\mathcal{O}_{X})
=\mathrm{H}^{0}(X;\Omega_{X}^{2})=0.
\end{equation}
Furthermore, \(X\) is non-projective and in particular, \(X\) is non-K\"{a}hler.
\end{proposition}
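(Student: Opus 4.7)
The plan is to extract every assertion in the proposition from the explicit semistable model $\mathcal{X}\to\Delta$ with central fiber $X_{0}=X_{1}\cup_{S}X_{2}$, by feeding this geometry into the Clemens--Schmid sequence and the weight spectral sequence developed in \S2. The statement is essentially a summary of results of Hashimoto--Sano, so I will not redo their detailed lattice calculations; rather, I will indicate which of our tools recovers each clause.

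First, for simply-connectedness and $b_{1}(X)=0$, I would apply van Kampen's theorem to the decomposition $X_{0}=X_{1}\cup_{S}X_{2}$. The component $X_{2}=(\mathbf{P}^{1})^{3}$ is simply connected, and $X_{1}$ is an iterated blow-up of $X_{2}$ along smooth connected curves, which preserves the fundamental group; the gluing surface $S$ is a $K3$ and is simply connected. Hence $\pi_{1}(X_{0})=1$. Since $X_{0}$ is a strong deformation retract of $\mathcal{X}$ and $X\hookrightarrow\mathcal{X}$ is a homotopy equivalence after base change to a smaller disc, $\pi_{1}(X)=1$ and in particular $b_{1}(X)=0$.

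Second, for the Hodge-theoretic vanishings, I would invoke the degeneration of the Hodge-to-de~Rham spectral sequence at $E_{1}$, which holds for any general fiber of a semistable degeneration by \cite{2008-Peters-Steenbrink-mixed-Hodge-structures}*{Cor.~11.24}. This gives $h^{p,q}(X)=\dim\mathrm{Gr}_{F}^{p}\mathrm{H}^{p+q}(X;\mathbb{C})$, and $h^{1,0}=h^{0,1}=0$ follows from $b_{1}=0$. For $h^{2,0}=h^{0,2}=0$, I would read off the Hodge piece of type $(2,0)$ in the graded quotients of the monodromy weight filtration on $\mathrm{H}^{2}(X;\mathbb{C})$, computed via the weight spectral sequence with $E_{1}^{-r,2+r}$ involving $\mathrm{H}^{j}(X_{1})\oplus\mathrm{H}^{j}(X_{2})$ and $\mathrm{H}^{j}(S)$; the only source of $(2,0)$-pieces is $\mathrm{H}^{2,0}(S)$, and it is killed by the alternating restriction pairing with the analogous piece on $X_{1},X_{2}$ (since $h^{2,0}(X_{1})=h^{2,0}(X_{2})=0$ and the Gysin/restriction morphisms are injective on this Hodge type).

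Third, for $b_{2}(X)=a+3$ I would use the same weight spectral sequence but now for $\mathrm{H}^{2}(X;\mathbb{C})$. The non-trivial graded pieces are given by the kernel/cokernel of maps between the Picard groups of $X_{1}$ and $X_{2}$ and the Picard lattice $\mathrm{Pic}(S)=\mathbb{Z}h_{1}\oplus\mathbb{Z}h_{2}\oplus\mathbb{Z}h_{3}$, twisted by the explicit $\iota_{a}^{*}$-action recorded by the $3\times3$ matrix in the excerpt. A straightforward rank computation using this matrix, together with the exceptional divisors produced by the $a$ blow-ups along the $C_{i}$, yields the count $a+3$. For non-projectivity, the same computation shows that the image of $\mathrm{H}^{2}(X_{0};\mathbb{R})\to\mathrm{H}^{2}(X;\mathbb{R})$ carries no class satisfying all positivity conditions (positive self-intersection on each curve class and positive cube on $X$); equivalently, one uses Hashimoto--Sano's observation that the only candidate classes factor through images killed by the Clemens--Schmid boundary. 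This rules out a K\"{a}hler class, hence $X$ is non-K\"{a}hler and a fortiori non-projective.

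The main obstacle is the bookkeeping in the $b_{2}$-computation: one has to be precise about how the $a$ exceptional divisors from the blow-up along $C_{1},\ldots,C_{a}$, together with the pullback classes $h_{1},h_{2},h_{3}$ on $X_{2}$, assemble after quotienting by the image of the alternating restriction from $H^{2}(S)$ under the $\iota_{a}$-twisted gluing. This is precisely the calculation carried out in \cite{2023-Hashimoto-Sano-examples-of-non-kahler-calabi-yau-3-folds-with-arbitrarily-large-b2}, which I would cite directly rather than repeat.
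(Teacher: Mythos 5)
The paper does not actually prove this proposition: the sentence immediately preceding it reads ``Hashimoto and Sano investigated the variety $X$ in great detail and proved several results concerning $X$. We summarize them as follows,'' and the statement is imported wholesale from \cite{2023-Hashimoto-Sano-examples-of-non-kahler-calabi-yau-3-folds-with-arbitrarily-large-b2}. Your decision to cite that paper for the lattice bookkeeping and the non-projectivity argument is therefore exactly what the author does, and to that extent your proposal is consistent with the paper.

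However, the supplementary arguments you sketch contain a genuine error and a misstatement. For simple connectivity you assert that ``$X\hookrightarrow\mathcal{X}$ is a homotopy equivalence after base change to a smaller disc.'' This is false: the total space deformation retracts onto the \emph{central} fiber, so $\mathcal{X}\simeq X_{0}$, but the inclusion of a \emph{general} fiber only induces a surjection $\pi_{1}(X)\twoheadrightarrow\pi_{1}(X_{0})$ in general (a smoothing of a nodal rational curve is an elliptic curve, whose fundamental group does not inject into that of the nodal curve). So van Kampen on $X_{0}$ plus this retraction gives you $\pi_{1}(X_{0})=1$ but not $\pi_{1}(X)=1$; establishing simple connectivity of the smoothing requires a genuinely different argument (this is one of the nontrivial points in Hashimoto--Sano). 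Separately, in your $h^{2,0}$ discussion you locate the potential $(2,0)$-classes in $\mathrm{H}^{2,0}(S)$, but $\mathrm{H}^{2}(S)$ does not enter the weight spectral sequence for $\mathrm{H}^{2}(X;\mathbb{C})$ at all --- it contributes to $\mathrm{Gr}_{2}^{\mathcal{W}(M)}\mathrm{H}^{3}$ and $\mathrm{Gr}_{4}^{\mathcal{W}(M)}\mathrm{H}^{3}$, as in Lemma \ref{lem:monodromy-isomorphism}; the graded pieces of $\mathrm{H}^{2}(X)$ are subquotients of $\mathrm{H}^{2}(X_{1})\oplus\mathrm{H}^{2}(X_{2})$ and $\mathrm{H}^{0}(S)$, and the vanishing of $h^{2,0}(X)$ follows from the rationality of $X_{1},X_{2}$ together with $E_{1}$-degeneration, not from any cancellation against $\mathrm{H}^{2,0}(S)$. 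If you intend to offer these sketches as proofs rather than deferring entirely to the citation, both points need to be repaired.
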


The following lemma will be useful later.

\begin{lemma}
\label{lem:image-picard}
The image of the pullback
\begin{equation}
(\iota_{a}^{-1})^{\ast}\colon \mathrm{H}^{2}(X_{1};\mathbb{C})\to \mathrm{H}^{2}(S;\mathbb{C})
\end{equation}
is contained in \(\mathbb{C}h_{1}\oplus\mathbb{C}h_{2}
\oplus\mathbb{C}h_{3}\).
\end{lemma}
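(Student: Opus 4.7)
The plan is to observe that $H^2(X_1;\mathbb{C})$ is spanned by algebraic classes, so the image under the pullback by the algebraic morphism $\iota_a^{-1}\colon S\to S_1\hookrightarrow X_1$ lies in the algebraic part of $H^2(S;\mathbb{C})$, and for a very general $(2,2,2)$ hypersurface this is precisely $\mathbb{C}h_1\oplus\mathbb{C}h_2\oplus\mathbb{C}h_3$.

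First I would decompose $H^2(X_1;\mathbb{C})$ using the blow-up formula. Since $\mu\colon X_1\to X_2=(\mathbf{P}^1)^3$ is a composition of smooth blow-ups along the disjoint smooth curves $C_1,\ldots,C_a,C$, one has
\[
H^2(X_1;\mathbb{C})=\mu^{\ast}H^2(X_2;\mathbb{C})\oplus\bigoplus_{k=1}^{a}\mathbb{C}[E_k]\oplus\mathbb{C}[E_C],
\]
where $E_k,E_C$ are the exceptional divisors. Each summand is the $\mathbb{C}$-span of the first Chern class of a line bundle on $X_1$: the classes $\mu^{\ast}H_i$ are pullbacks of the three hyperplane classes on $X_2$, while the $[E_k],[E_C]$ are classes of effective divisors. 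Together with $h^{0,1}(X_1)=h^{0,2}(X_1)=0$ (a consequence of the analogous vanishing on $X_2$ and the Hodge-number formula for blow-ups along curves), this yields $H^2(X_1;\mathbb{C})=\operatorname{Pic}(X_1)\otimes_{\mathbb{Z}}\mathbb{C}$ via the first Chern class.

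Next I would exploit functoriality: because $\iota_a^{-1}\colon S\to X_1$ is an algebraic morphism, the pullback sends the image of $c_1\colon\operatorname{Pic}(X_1)\otimes\mathbb{C}\hookrightarrow H^2(X_1;\mathbb{C})$ into the image of $c_1\colon\operatorname{Pic}(S)\otimes\mathbb{C}\hookrightarrow H^2(S;\mathbb{C})$. Since $S$ is a very general member of $|\mathcal{O}_{X_2}(2,2,2)|$, the Picard lattice is $\operatorname{Pic}(S)=\mathbb{Z}h_1\oplus\mathbb{Z}h_2\oplus\mathbb{Z}h_3$, so its image in $H^2(S;\mathbb{C})$ is the three-dimensional subspace $\mathbb{C}h_1\oplus\mathbb{C}h_2\oplus\mathbb{C}h_3$, giving the desired containment.

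The proof presents no real obstacle; the two items worth double-checking are the Picard-equals-$H^2$ statement on $X_1$ (which reduces to the standard vanishing of $h^{0,1}$ and $h^{0,2}$ under blow-ups of curves in a simply-connected rational threefold) and that the disjointness of the centers $C_1,\ldots,C_a,C$ makes the blow-up decomposition unambiguous. Should an explicit description be preferred, one can verify
\[
(\iota_a^{-1})^{\ast}\mu^{\ast}H_i=(\iota^{-a})^{\ast}h_i,\quad (\iota_a^{-1})^{\ast}[E_k]=(\iota^{-a})^{\ast}h_1,
\]
\[
(\iota_a^{-1})^{\ast}[E_C]=(\iota^{-a})^{\ast}\bigl((16a^{2}-a+4)h_1+(4-8a)h_2+(4+8a)h_3\bigr),
\]
using that each center is a Cartier divisor on the smooth surface $S$, whence the multiplicity-one identity $\mu^{\ast}[S]=[S_1]+[E_{\bullet}]$ holds and restriction of $[E_\bullet]$ to $S_1\cong S$ recovers the center, after which the explicit matrix of $\iota$ on $\mathbb{Z}h_1\oplus\mathbb{Z}h_2\oplus\mathbb{Z}h_3$ finishes the verification.
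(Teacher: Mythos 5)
Your proof is correct, and it takes a genuinely different (more conceptual) route than the paper's. The paper argues by direct computation: it lists the generators of $\mathrm{H}^{2}(X_{1};\mathbb{C})$ --- the pullbacks of $h_{1},h_{2},h_{3}$ and the exceptional classes $E_{1},\ldots,E_{a},E$ --- and observes that each restricts to $S_{1}\cong S$ as the class of a curve in a known linear system ($C_{k}\in|\mathcal{O}_{S}(1,0,0)|$, resp.\ $C\in|\mathcal{O}_{S}(16a^{2}-a+4,4-8a,4+8a)|$), hence lands in $\mathbb{C}h_{1}\oplus\mathbb{C}h_{2}\oplus\mathbb{C}h_{3}$; composing with $(\iota^{-a})^{\ast}$, which preserves $\operatorname{Pic}(S)$ by the explicit matrix, finishes. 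You instead invoke the principle that $\mathrm{H}^{2}(X_{1};\mathbb{C})=\operatorname{Pic}(X_{1})\otimes\mathbb{C}$ (valid, since $h^{0,1}=h^{0,2}=0$ persists under blow-ups along smooth curves in $(\mathbf{P}^{1})^{3}$) and that pullback along the algebraic map $\iota_{a}^{-1}\colon S\to X_{1}$ carries first Chern classes to first Chern classes, so the image sits inside $\operatorname{Pic}(S)\otimes\mathbb{C}=\mathbb{C}h_{1}\oplus\mathbb{C}h_{2}\oplus\mathbb{C}h_{3}$, using only that the very general $(2,2,2)$ surface has Picard rank $3$. This buys independence from the explicit identification of the restricted classes; your closing ``explicit description'' paragraph is essentially the paper's entire proof.

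One factual slip worth correcting: the centers are \emph{not} disjoint. On the K3 surface $S$ one has $h_{1}^{2}=0$ and $h_{1}\cdot h_{j}=2$ for $j\ne 1$, so $C\cdot C_{k}=2(4-8a)+2(4+8a)=16>0$, and $C$ meets every $C_{k}$. The blow-up must therefore be taken sequentially, as in the paper's proof ($Y_{k+1}=\mathrm{Bl}_{C_{k}}Y_{k}$, then $X_{1}=\mathrm{Bl}_{C'}Y_{a+1}$ for the proper transform $C'$ of $C$, which is identified with $C$ because each $T_{k+1}\to T_{k}$ is an isomorphism). This does not damage your argument --- the decomposition of $\mathrm{H}^{2}$ and the vanishing of $h^{0,1},h^{0,2}$ hold for iterated smooth blow-ups regardless of disjointness --- but the ``unambiguity from disjointness'' you appeal to is not actually available and should be replaced by the sequential description.
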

\begin{proof}
This follows from the construction.

Indeed, \(X_{1}\) is the blow-up of \(X=\mathbf{P}^{1}\times\mathbf{P}^{1}\times\mathbf{P}^{1}\)
along the curves \(C_{1},\ldots,C_{a},C\). Let 
\(Y_{k+1}:=\mathrm{Bl}_{C_{k}} Y_{k}\) with \(Y_{1}=X\)
and \(E_{k}\) be the exceptional divisor. Let 
\(T_{k+1}\) be the proper transform of \(T_{k}\) with \(T_{1}=S\).
We have
\begin{equation}
E_{k}\cap T_{k+1} \cong C_{k}\subset T_{k}
\end{equation}
under the blow-up morphism. Also in the last blow-up \(X_{1}=\mathrm{Bl}_{C}Y_{k+1}\),
we have
\begin{equation}
E\cap S_{1} \in |\mathcal{O}_{S_{1}}(D)|
\end{equation}
where \(D\) is the restriction of the divisor 
\((16a^{2}-a+4,4-8a,4+8a)\) under the morphism
\begin{equation}
S_{1}\xrightarrow{\cong}T_{a+1}\xrightarrow{\cong} T_{a}
\xrightarrow{\cong}\cdots \xrightarrow{\cong} T_{1}=S\subset X
\end{equation}
and \(S_{1}\subset X_{1}\) is the proper transform of \(T_{a+1}\).
The stated result follows since \(\mathrm{H}^{2}(X_{1};\mathbb{C})\)
is generated by the pullback of \(h_{1},h_{2},h_{3}\) and \(E_{1},\ldots,E_{a},E\).
\end{proof}

\subsection{The limiting mixed Hodge structure}
In this subsection, we will explicitly compute
Steenbrink's limiting mixed Hodge structure on the
middle cohomology of \(X\).

We have the spectral sequence induced by the ``monodromy weight filtration'' 
\(\mathcal{W}(M)\)
whose \(E_{1}\) page is given by
\begin{equation}
E_{1}^{-r,k+r} = \mathbf{H}^{k}
(\mathrm{Gr}_{r}^{\mathcal{W}(M)}\mathrm{Tot}(\mathcal{D}^{\bullet,\bullet}))
\Rightarrow \mathrm{Gr}_{k+r}^{\mathcal{W}(M)}\mathbf{H}^{k}(\mathrm{Tot}(\mathcal{D}^{\bullet,\bullet})).
\end{equation}
Moreover, the spectral sequence degenerates at \(E_{2}\) page.

Using the residue homomorphisms, one can compute the graded complexes
\begin{itemize}
\item \(\mathrm{Gr}_{-1}^{\mathcal{W}(M)}\mathrm{Tot}(\mathcal{D}^{\bullet,\bullet})
\simeq\mathbb{C}_{S}[-1]\);
\item \(\mathrm{Gr}_{0}^{\mathcal{W}(M)}\mathrm{Tot}(\mathcal{D}^{\bullet,\bullet})\simeq
\mathbb{C}_{X_{1}}\oplus\mathbb{C}_{X_{2}}\);
\item \(\mathrm{Gr}_{1}^{\mathcal{W}(M)}\mathrm{Tot}(\mathcal{D}^{\bullet,\bullet})
\simeq\mathbb{C}_{S}[-1]\)
\item \(\mathrm{Gr}_{r}^{\mathcal{W}(M)}\mathrm{Tot}(\mathcal{D}^{\bullet,\bullet})=0\)
for \(r\ne -1,0,1\).
\end{itemize}
Here the notation ``\(\simeq\)'' refers to ``is quasi-isomorphic to.''
Notice that the hypercohomology 
\(\mathbf{H}^{k}(\mathrm{Tot}(\mathcal{D}^{\bullet,\bullet}))\) 
of the total complex computes \(\mathrm{H}^{k}(X;\mathbb{C})\).

\begin{lemma}
\label{lem:monodromy-isomorphism}
We have
\begin{equation}
\label{eq:graded-4-3}
\mathrm{Gr}_{4}^{\mathcal{W}(M)}\mathrm{H}^{3}(X;\mathbb{C})
\cong \operatorname{ker}\left(\mathrm{H}^{2}(S;\mathbb{C})\to 
\mathrm{H}^{4}(X_{1};\mathbb{C})\oplus\mathrm{H}^{4}(X_{2};\mathbb{C})\right)
\end{equation}
and
\begin{equation}
\label{eq:graded-2-3}
\mathrm{Gr}_{2}^{\mathcal{W}(M)}\mathrm{H}^{3}(X;\mathbb{C})
\cong \operatorname{coker}\left(
\mathrm{H}^{2}(X_{1};\mathbb{C})\oplus\mathrm{H}^{2}(X_{2};\mathbb{C})
\to\mathrm{H}^{2}(S;\mathbb{C})\right)
\end{equation}
where the arrow in \eqref{eq:graded-4-3} is induced by Gysin maps and
the one in \eqref{eq:graded-2-3} is induced from the restrictions
\(\iota_{a}\colon S\to X_{1}\) and \(S\subset X_{2}\). Moreover, the
monodromy operator 
\begin{equation}
N\colon \mathrm{Gr}_{4}^{\mathcal{W}(M)}
\mathrm{H}^{3}(X;\mathbb{C})
\to \mathrm{Gr}_{2}^{\mathcal{W}(M)}
\mathrm{H}^{3}(X;\mathbb{C})
\end{equation}
is precisely the morphism induced from the identity on \(\mathrm{H}^{2}(S;\mathbb{C})\)
and it is an isomorphism in the present situation, i.e.~it satisfies
Hypothesis \ref{assumption:isomorphism}.
\end{lemma}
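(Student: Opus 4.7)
My plan is to read off the two graded pieces from the $E_1$-page of the monodromy weight spectral sequence, to identify $N$ as induced by the identity on $\mathrm{H}^2(S;\mathbb{C})$, and then to reduce the assertion that $N$ is an isomorphism to the non-degeneracy of a $3\times 3$ intersection matrix on $S$.

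First I would feed the quasi-isomorphisms for $\mathrm{Gr}_r^{\mathcal{W}(M)}\mathrm{Tot}(\mathcal{D}^{\bullet,\bullet})$ listed just above the lemma into $E_1^{-r,3+r}$; only $r\in\{-1,0,1\}$ contribute, so the row $q=4$ becomes
\begin{equation*}
0\longrightarrow \mathrm{H}^2(S;\mathbb{C})\xrightarrow{\phi}\mathrm{H}^4(X_1;\mathbb{C})\oplus\mathrm{H}^4(X_2;\mathbb{C})
\end{equation*}
and the row $q=2$ becomes
\begin{equation*}
\mathrm{H}^2(X_1;\mathbb{C})\oplus\mathrm{H}^2(X_2;\mathbb{C})\xrightarrow{\psi}\mathrm{H}^2(S;\mathbb{C})\longrightarrow 0.
\end{equation*}
Taking cohomology at the middle terms produces $\operatorname{ker}(\phi)$ and $\operatorname{coker}(\psi)$; the remaining pieces $\mathrm{Gr}_r^{\mathcal{W}(M)}\mathrm{H}^3(X;\mathbb{C})$ vanish for $r\notin\{2,3,4\}$ because the relevant $\mathbf{H}^3(\mathrm{Gr}_{r-3}^{\mathcal{W}(M)})$ vanish, so Hypothesis~\ref{assumption:isomorphism} reduces to the single statement that $N\colon\mathrm{Gr}_4\to\mathrm{Gr}_2$ is an isomorphism. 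Tracing the Steenbrink shift $\mathcal{D}^{p,q}\to\mathcal{D}^{p+1,q-1}$ through the residue identifications shows that both $\mathrm{Gr}_1^{\mathcal{W}(M)}\mathcal{D}^{0,3}$ and $\mathrm{Gr}_{-1}^{\mathcal{W}(M)}\mathcal{D}^{1,2}$ correspond to the same associated graded piece $\mathrm{Gr}_2^{\mathcal{W}}\Omega^{4}_{\mathcal{X}}(\log E)$ and the shift acts there by the identity, so $N$ is (up to sign) induced by the identity on $\mathrm{H}^2(S;\mathbb{C})$.

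The question then becomes whether the composite $\operatorname{ker}(\phi)\hookrightarrow\mathrm{H}^2(S;\mathbb{C})\twoheadrightarrow\mathrm{H}^2(S;\mathbb{C})/\operatorname{Im}(\psi)$ is bijective, i.e.~whether $\mathrm{H}^2(S;\mathbb{C})=\operatorname{Im}(\psi)\oplus\operatorname{ker}(\phi)$. The image of $\iota_2^{*}\colon\mathrm{H}^2(X_2;\mathbb{C})\to\mathrm{H}^2(S;\mathbb{C})$ is spanned by $h_1,h_2,h_3$ since $X_2=\mathbf{P}^1\times\mathbf{P}^1\times\mathbf{P}^1$; combined with Lemma~\ref{lem:image-picard}, which confines $\operatorname{Im}(\iota_1^{*})$ to this same three-dimensional subspace, this gives $\operatorname{Im}(\psi)=\mathbb{C}h_1\oplus\mathbb{C}h_2\oplus\mathbb{C}h_3$. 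By Poincar\'{e} duality on the K3 surface $S$, each Gysin map $(\iota_i)_{*}\colon\mathrm{H}^2(S;\mathbb{C})\to\mathrm{H}^4(X_i;\mathbb{C})$ is transpose to $\iota_i^{*}$ with respect to the intersection form on $S$, hence $\operatorname{ker}(\phi)=\langle h_1,h_2,h_3\rangle^{\perp}$.

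The remaining task reduces to showing that the intersection form on $S$ restricted to $\langle h_1,h_2,h_3\rangle$ is non-degenerate. From the tri-degree $(2,2,2)$ of $S\subset\mathbf{P}^1\times\mathbf{P}^1\times\mathbf{P}^1$ I would compute $h_i^{2}=0$ and $h_i\cdot h_j=2$ for $i\neq j$, so the Gram matrix is
\begin{equation*}
\begin{pmatrix}0&2&2\\ 2&0&2\\ 2&2&0\end{pmatrix},
\end{equation*}
whose determinant equals $16$. Non-degeneracy then forces the orthogonal decomposition $\mathrm{H}^2(S;\mathbb{C})=\langle h_1,h_2,h_3\rangle\oplus\langle h_1,h_2,h_3\rangle^{\perp}$, which is exactly what lets the identity on $\mathrm{H}^2(S;\mathbb{C})$ descend to an isomorphism $\operatorname{ker}(\phi)\xrightarrow{\sim}\operatorname{coker}(\psi)$. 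The one step demanding genuine care is pinning down $\operatorname{Im}(\psi)$, where Lemma~\ref{lem:image-picard} is essential to discard the exceptional divisors of $X_1\to(\mathbf{P}^1)^{3}$; the remainder is diagram chasing and a small determinant computation.
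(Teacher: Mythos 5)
Your proposal is correct and follows essentially the same route as the paper: both read off $\mathrm{Gr}_4^{\mathcal{W}(M)}$ and $\mathrm{Gr}_2^{\mathcal{W}(M)}$ from the $E_1$-page of the weight spectral sequence, both invoke Lemma \ref{lem:image-picard} to confine the image of the restriction map to $\langle h_1,h_2,h_3\rangle$, and both reduce the isomorphism claim to the nondegeneracy of the same $3\times 3$ matrix (your Gram matrix of $h_1,h_2,h_3$ on $S$ is exactly the matrix of the paper's map $b\mapsto b\cup(2h_1+2h_2+2h_3)$ on $\mathrm{H}^2(X_2;\mathbb{C})$). The only cosmetic difference is that you obtain bijectivity directly from the orthogonal decomposition $\mathrm{H}^2(S;\mathbb{C})=\langle h_1,h_2,h_3\rangle\oplus\langle h_1,h_2,h_3\rangle^{\perp}$, whereas the paper proves injectivity of $N$ and concludes via the equality $\dim\mathrm{Gr}_4^{\mathcal{W}(M)}=\dim\mathrm{Gr}_2^{\mathcal{W}(M)}$.
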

\begin{proof}
The \(E_{2}\)-term \(E_{2}^{-r,m+r}\) is the cohomology of 
\begin{equation}
E_{1}^{-r-1,m+r} \to E_{1}^{-r,m+r}\to E_{1}^{-r+1,m+r}.
\end{equation}
Put \(m=3\) and \(r=1\). We obtain the sequence
\begin{equation}
E_{1}^{-2,4}=0 \to E_{1}^{-1,4}=\mathrm{H}^{2}(S;\mathbb{C})\to E_{1}^{0,4}=
\mathrm{H}^{4}(X_{1};\mathbb{C})\oplus\mathrm{H}^{4}(X_{2};\mathbb{C}).
\end{equation}
This implies the first isomorphism.
Similarly, by putting \(m=3\) and \(r=-1\), we obtain
\begin{equation}
E_{1}^{0,2}=
\mathrm{H}^{2}(X_{1};\mathbb{C})\oplus\mathrm{H}^{2}(X_{2};\mathbb{C}) 
\to E_{1}^{1,2}=\mathrm{H}^{2}(S;\mathbb{C})\to E_{1}^{2,2}=0
\end{equation}
which implies the second statement.
Now since 
\begin{equation}
\mathrm{Gr}_{4}^{\mathcal{W}(M)}
\mathrm{H}^{3}(X;\mathbb{C})~
\mbox{and}~\mathrm{Gr}_{2}^{\mathcal{W}(M)}
\mathrm{H}^{2}(X;\mathbb{C})
\end{equation}
have the same dimension, it suffices to show that 
\(N\) is an injection. 

Look at the following diagram
\begin{equation}
\begin{tikzcd}
& &\mathrm{H}^{2}(S;\mathbb{C})\ar[r,"\varphi^{\vee}"]\ar[d,"\mathrm{id}"]
&\mathrm{H}^{4}(X_{1};\mathbb{C})\oplus\mathrm{H}^{4}(X_{2};\mathbb{C})\\
&\mathrm{H}^{2}(X_{1};\mathbb{C})\oplus\mathrm{H}^{2}(X_{2};\mathbb{C})\ar[r,"\varphi"]
&\mathrm{H}^{2}(S;\mathbb{C}) &
\end{tikzcd}
\end{equation}
where \(\varphi\) is the usual signed restriction
and \(\varphi^{\vee}\) is the signed Gysin map. It is now sufficient to prove that
if \(a\in\operatorname{ker}(\varphi^{\vee})\) and \(a\in\operatorname{im}(\varphi)\), 
then \(a=0\). Write \(a=\varphi(b)\) for some \(b\in 
\mathrm{H}^{2}(X_{1};\mathbb{C})\oplus\mathrm{H}^{2}(X_{2};\mathbb{C})\).
By Lemma \ref{lem:image-picard}, we may assume that \(b\in \mathrm{H}^{2}(X_{2};\mathbb{C})\).
The conditions imply that \(\varphi^{\vee}\varphi(b)=0\).
But we have
\begin{equation}
\varphi^{\vee}\varphi(b) = b\cup (2h_{1}+2h_{2}+2h_{3}),~b\in \mathrm{H}^{2}(X_{2};\mathbb{C})
\end{equation}
and one checks this is an injection (indeed, it is an isomorphism). Therefore, \(b=0\)
and hence \(a=0\) as desired.
\end{proof}

\subsection{A construction of the basis for the canonical extension}
Let the notations be as before. We construct a basis for 
\(F^{2}\) as follows. Recall that in the present situation we have
\begin{align}
W_{2}&=I^{2,0}\oplus I^{1,1}\oplus I^{0,2},\\
W_{3}&=W_{2}\oplus I^{3,0}\oplus I^{2,1}\oplus I^{1,2}\oplus I^{0,3},\\
W_{4}&=W_{3}\oplus I^{3,1}\oplus I^{2,2}\oplus I^{1,3}.
\end{align}
Note that from \eqref{eq:deligne-splitting},
both \(I^{2,0}\oplus I^{0,2}\) and \(I^{1,1}\) are
complex vector spaces with real structure. So is \(\bigoplus_{p=0}^{3} I^{p,3-p}\).
Since both \(X_{1}\) and \(X_{2}\) in Hashimoto--Sano's construction
are rational, we have \(\dim_{\mathbb{C}} I^{3,1}=1\) and \(I^{3,0}=0\).
Consequently, we can pick a \emph{real} basis
\(\{\epsilon_{1},\epsilon_{2}\}\) for \(I^{2,0}\oplus I^{0,2}\) and 
\(\{\alpha_{1},\ldots,\alpha_{m}\}\) 
for \(I^{1,1}\). Moreover, there are complex numbers \(w_{kl}\) with 
\(1\le k,l\le 2\) such that
\begin{equation}
\xi_{1}:=\sum_{l=1}^{2} w_{1l}\epsilon_{l}\in I^{2,0}~
\mbox{and}~\xi_{2}:=\sum_{l=1}^{2} w_{2l}\epsilon_{l}\in I^{0,2}.
\end{equation}
We may also assume that \(\bar{\xi}_{1}=\xi_{2}\), i.e.~
\begin{equation}
w_{11}=\bar{w}_{21}~\mbox{and}~w_{12}=\bar{w}_{22}.
\end{equation}

Since \(I^{2,1}\oplus I^{1,2}\) is also a 
complex vector space with real structure, we can pick 
a real basis \(\{\beta_{1},\ldots,\beta_{2h}\}\) for it.

Next let us consider \(I^{1,1}\oplus I^{2,2}\). 
This is again a complex vector spaces with real structure
by \eqref{eq:deligne-splitting} and therefore it also 
admits a real basis. We may extend the basis \(\{\alpha,\ldots,\alpha_{m}\}\)
to become a real basis 
\(\{\alpha_{1},\ldots,\alpha_{m},\gamma_{1},\ldots,\gamma_{m}\}\) 
for \(I^{1,1}\oplus I^{2,2}\) as before. Also, there are complex numbers
\(x_{kl}\in\mathbb{C}\) with \(1\le k,l\le m\) such that
\begin{equation}
\delta_{k}:=\gamma_{k}-\sum_{l=1}^{m} x_{kl}\alpha_{l}\in I^{2,2}.
\end{equation}
We may further assume that \(N(\delta_{k})=N(\gamma_{k})=\alpha_{k}\) because 
\(N\) is an isomorphism on the relevant grade piece by Lemma 
\ref{lem:monodromy-isomorphism}.

Finally, consider \(I^{3,1}\oplus I^{1,3}\oplus I^{2,0}\oplus I^{0,2}\).
Again by \eqref{eq:deligne-splitting}, this is a complex vector space
(of dimension four) with real structure. We may extend 
\(\{\epsilon_{1},\epsilon_{2}\}\) to a real basis 
\(\{\epsilon_{1},\epsilon_{2},\epsilon_{3},\epsilon_{4}\}\) for it.
We may assume that \(N(\epsilon_{3})=\epsilon_{1}\) and
\(N(\epsilon_{4})=\epsilon_{2}\).
Again there are complex numbers \(y_{kl}\)
with \(k=1,2\) and \(1\le l\le 4\) such that
\begin{align*}
\sigma_{1}:=\sum_{l=1}^{4}y_{1l}\epsilon_{l}\in I^{3,1}~\mbox{and}~
\sigma_{2}:=\sum_{l=1}^{4}y_{2l}\epsilon_{l}\in I^{1,3}
\end{align*}
Under our assumption, we have
\begin{equation}
N(\sigma_{1})=y_{13}\epsilon_{1}+y_{14}\epsilon_{2}\in I^{2,0}~\mbox{and}~
N(\sigma_{2})=y_{23}\epsilon_{1}+y_{24}\epsilon_{2}\in I^{0,2}.
\end{equation}
After rescaling, we may further assume \(y_{13}=w_{11}\), \(y_{14}=w_{12}\),
\(y_{23}=w_{21}\), and \(y_{24}=w_{22}\), i.e.~
\(N(\sigma_{1})=\xi_{1}\) and \(N(\sigma_{2})=\xi_{2}\).

\begin{figure}
\includegraphics[scale=0.55]{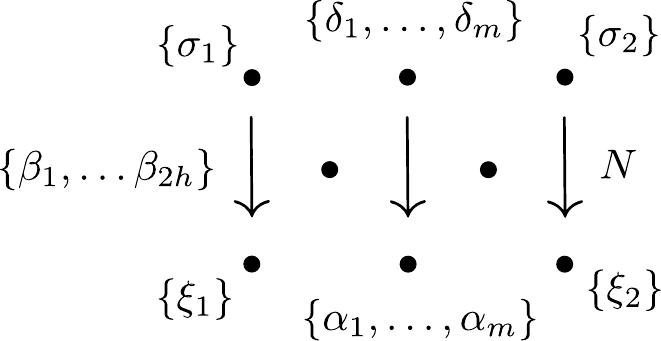}
\caption{The limiting mixed Hodge diamond for 
\(\mathrm{H}^{3}(X;\mathbb{C})\) and the bases for
the corresponding subspaces \(I^{p,q}\).}
\end{figure}

\subsection{Notation}
Let us fix the notation for the later use.
\begin{itemize}
\item Let \(\{u_{1},\ldots,u_{h}\}\) be
a basis for \(I^{2,1}\). We can
write \(u_{p}=\sum_{i=1}^{2h} b^{i}_{p} \beta_{i}\).
\item Let \(\{f_{1}:=\sigma_{1}\}\) be a basis for \(I^{3,1}\).
\item Let \(\{g_{1}:=\xi_{1}\}\) be a basis for \(I^{2,0}\).
\item Let \(\{v_{1}=\delta_{1},\ldots,v_{m}=\delta_{m}\}\) be a basis for \(I^{2,2}\). 
\end{itemize}
Since \(\mathcal{F}^{2}\) is a holomorphic subbundle in 
Deligne's canonical extension, we may extend
\begin{equation}
\{u_{1},\ldots,u_{h},v_{1},\ldots,v_{m},f_{1},g_{1}\}
\end{equation}
to become a local frame for \(\mathcal{F}^{2}\) over \(\Delta\). Explicitly, we can write
\begin{align*}
\begin{split}
u_{p}(t) &= \sum_{i=1}^{m} U^{\alpha_{i}}_{p}(t) \alpha_{i}+
\sum_{i=1}^{2h} U^{\beta_{i}}_{p}(t) \beta_{i} + \sum_{i=1}^{m} 
U^{\delta_{i}}_{p}(t) \delta_{i}+\sum_{i=1}^{2}
U^{\sigma_{i}}_{p}(t)\sigma_{i}+\sum_{i=1}^{2}U^{\xi_{i}}_{p}(t)\xi_{i},\\
v_{q}(t) &= \sum_{i=1}^{m} V^{\alpha_{i}}_{p}(t) \alpha_{i}+
\sum_{i=1}^{2h} V^{\beta_{i}}_{p}(t) \beta_{i} + \sum_{i=1}^{m} 
V^{\delta_{i}}_{p}(t) \delta_{i}+\sum_{i=1}^{2}
V^{\sigma_{i}}_{p}(t)\sigma_{i}+\sum_{i=1}^{2}V^{\xi_{i}}_{p}(t)\xi_{i},\\
f_{1}(t)&= \sum_{i=1}^{m} F^{\alpha_{i}}_{1}(t) \alpha_{i}+
\sum_{i=1}^{2h} F^{\beta_{i}}_{1}(t) \beta_{i} + \sum_{i=1}^{m} 
F^{\delta_{i}}_{1}(t) \delta_{i}+\sum_{i=1}^{2}
F^{\sigma_{i}}_{1}(t)\sigma_{i}+\sum_{i=1}^{2}F^{\xi_{i}}_{1}(t)\xi_{i},\\
g_{1}(t)&= \sum_{i=1}^{m} G^{\alpha_{i}}_{1}(t) \alpha_{i}+
\sum_{i=1}^{2h} G^{\beta_{i}}_{1}(t) \beta_{i} + \sum_{i=1}^{m} 
G^{\delta_{i}}_{1}(t) \delta_{i}+\sum_{i=1}^{2}
G^{\sigma_{i}}_{1}(t)\sigma_{i}+\sum_{i=1}^{2}G^{\xi_{i}}_{1}(t)\xi_{i}.\\
\end{split}
\end{align*}
In the above expression, the functions \(U,V,F,G\) are all \emph{holomorphic}.
In terms of the real basis
\(\{\alpha,\ldots,\alpha_{m},\beta_{1},\ldots,\beta_{2h},
\gamma_{1},\ldots,\gamma_{m},\epsilon_{1},\ldots,\epsilon_{4}\}\), 
we have
\begin{align*}
\begin{split}
u_{p}(t) &= \sum_{i=1}^{m} \left(U^{\alpha_{i}}_{p}(t)-
\sum_{k=1}^{m}U_{p}^{\delta_{k}}(t)x_{ki}\right) \alpha_{i}+
\sum_{i=1}^{2h} U^{\beta_{i}}_{p}(t) \beta_{i} + 
\sum_{i=1}^{m} U^{\delta_{i}}_{p}(t) \gamma_{i}\\
&+\sum_{j=1}^{2}\left(\sum_{i=1}^{2}y_{ij}U_{p}^{\sigma_{i}}(t)+
\sum_{i=1}^{2}w_{ij}U_{p}^{\xi_{i}}(t)\right)\epsilon_{j}
+\sum_{j=1}^{2}\left(\sum_{i=1}^{2}w_{ij}U_{p}^{\sigma_{i}}(t)\right)\epsilon_{j+2},\\
v_{p}(t) &= \sum_{i=1}^{m} \left(V^{\alpha_{i}}_{p}(t)-
\sum_{k=1}^{m}V_{p}^{\delta_{k}}(t)x_{ki}\right) \alpha_{i}+
\sum_{i=1}^{2h} V^{\beta_{i}}_{p}(t) \beta_{i} + 
\sum_{i=1}^{m} V^{\delta_{i}}_{p}(t) \gamma_{i}\\
&+\sum_{j=1}^{2}\left(\sum_{i=1}^{2}y_{ij}V_{p}^{\sigma_{i}}(t)+
\sum_{i=1}^{2}w_{ij}V_{p}^{\xi_{i}}(t)\right)\epsilon_{j}
+\sum_{j=1}^{2}\left(\sum_{i=1}^{2}w_{ij}V_{p}^{\sigma_{i}}(t)\right)\epsilon_{j+2},\\
f_{1}(t) &= \sum_{i=1}^{m} \left(F^{\alpha_{i}}_{1}(t)-
\sum_{k=1}^{m}F_{1}^{\delta_{k}}(t)x_{ki}\right) \alpha_{i}+
\sum_{i=1}^{2h} F^{\beta_{i}}_{1}(t) \beta_{i} + 
\sum_{i=1}^{m} F^{\delta_{i}}_{1}(t) \gamma_{i}\\
&+\sum_{j=1}^{2}\left(\sum_{i=1}^{2}y_{ij}F_{1}^{\sigma_{i}}(t)+
\sum_{i=1}^{2}w_{ij}F_{1}^{\xi_{i}}(t)\right)\epsilon_{j}
+\sum_{j=1}^{2}\left(\sum_{i=1}^{2}w_{ij}F_{1}^{\sigma_{i}}(t)\right)\epsilon_{j+2},\\
g_{1}(t) &= \sum_{i=1}^{m} \left(G^{\alpha_{i}}_{1}(t)-
\sum_{k=1}^{m}G_{1}^{\delta_{k}}(t)x_{ki}\right) \alpha_{i}+
\sum_{i=1}^{2h} G^{\beta_{i}}_{1}(t) \beta_{i} + 
\sum_{i=1}^{m} G^{\delta_{i}}_{1}(t) \gamma_{i}\\
&+\sum_{j=1}^{2}\left(\sum_{i=1}^{2}y_{ij}G_{1}^{\sigma_{i}}(t)+
\sum_{i=1}^{2}w_{ij}G_{1}^{\xi_{i}}(t)\right)\epsilon_{j}
+\sum_{j=1}^{2}\left(\sum_{i=1}^{2}w_{ij}G_{1}^{\sigma_{i}}(t)\right)\epsilon_{j+2}.
\end{split}
\end{align*}
Also, these functions satisfy the properties
\begin{itemize}
\item[(1)] \(U_{p}^{\alpha_{i}}, U_{p}^{\delta_{i}}, 
U_{p}^{\sigma_{i}}, U_{p}^{\xi_{i}}\in\mathbf{h}\)
and \(B_{p}^{\beta_{i}}(t)\to b_{p}^{i}\) as \(t\to 0\);
\item[(2)] \(V_{q}^{\alpha_{i}}, V_{q}^{\beta_{i}}, 
V_{q}^{\sigma_{i}}, V_{q}^{\xi_{i}}\in\mathbf{h}\)
and \(V_{q}^{\delta_{i}}(t)\to \delta_{q}^{i}\) as \(t\to 0\);
\item[(3)] \(F_{1}^{\alpha_{i}}, F_{1}^{\beta_{i}}, F_{1}^{\delta_{i}}, 
F_{1}^{\xi_{i}}\in\mathbf{h}\)
and \(F_{1}^{\sigma_{i}}(t)\to \delta_{1}^{i}\) as \(t\to 0\);
\item[(4)] \(G_{1}^{\alpha_{i}}, G_{1}^{\beta_{i}}, G_{1}^{\delta_{i}}, 
G_{1}^{\sigma_{i}}\in\mathbf{h}\)
and \(G_{1}^{\xi_{i}}(t)\to \delta_{1}^{i}\) as \(t\to 0\).
\end{itemize}

\subsection{Main calculations}
As before, let \(\mathfrak{h}\to \Delta^{\ast}\) be the 
universal cover.
In order to get a frame of \(\mathcal{F}_{t}^{2}\) for \(t\ne 0\), we ``untwist''
the frame \(\{u_{1}(t),\ldots,u_{h}(t),v_{1}(t),\ldots,v_{m}(t),f_{1}(t),g_{1}(t)\}\) 
and obtain the multi-valued sections
\begin{align}
\begin{split}
u'_{p}(t)&:=u_{p}(t)+zNu_{p}(t),\\
v'_{q}(t)&:=v_{q}(t)+zNv_{q}(t),\\
f'_{1}(t)&:=f_{1}(t)+zNf_{1}(t),\\
g'_{1}(t)&:=g_{1}(t)+zNg_{1}(t).
\end{split}
\end{align}
Then \(\{u'_{1}(t),\ldots,u'_{h}(t),
v'_{1}(t),\ldots,v'_{m}(t),f'_{1}(t),g_{1}'(t)\}\) 
is a frame for \(\mathcal{F}^{2}_{t}\)
for \(t\ne 0\). We may and will further assume that 
\(z\) lies in a bounded vertical strip, i.e.~
\(\operatorname{Re}(z)\in[0,1]\).

To prove that \(\mathcal{F}^{2}_{t}\cap 
\overline{\mathcal{F}^{2}_{t}}\), it now suffices to show that 
\begin{equation}
\bigwedge_{i=1}^{h} u'_{i}(t)\wedge\overline{u'_{i}(t)}\wedge
\bigwedge_{i=1}^{m} v'_{i}(t)\wedge\overline{v'_{i}(t)}
\wedge f'_{1}(t)\wedge\overline{f'_{1}(t)}\wedge
g'_{1}(t)\wedge\overline{g'_{1}(t)}\ne 0.
\end{equation}
More accurately, using the fact \(N(\alpha_{i})=N(\beta_{i})=N(\xi_{i})=0\),
\(N(\delta_{i})=N(\gamma_{i})=\alpha_{i}\), 
and \(N(\sigma_{i})=\xi_{i}\),
we get
\begin{align*}
\begin{split}
u'_{p}&(t) = \sum_{i=1}^{m} \left(U^{\alpha_{i}}_{p}(t)-
\sum_{k=1}^{m}U_{p}^{\delta_{k}}(t)x_{ki}+zU_{p}^{\delta_{i}}(t)\right) \alpha_{i}+
\sum_{i=1}^{2h} U^{\beta_{i}}_{p}(t) \beta_{i} + 
\sum_{i=1}^{m} U^{\delta_{i}}_{p}(t) \gamma_{i}\\
&+\sum_{j=1}^{2}\left(\sum_{i=1}^{2}y_{ij}U_{p}^{\sigma_{i}}(t)+
\sum_{i=1}^{2}w_{ij}\left(U_{p}^{\xi_{i}}(t)+zU_{p}^{\sigma_{i}}(t)\right)\right)\epsilon_{j}
+\sum_{j=1}^{2}\left(\sum_{i=1}^{2}w_{ij}U_{p}^{\sigma_{i}}(t)\right)\epsilon_{j+2},\\
v'_{p}&(t) = \sum_{i=1}^{m} \left(V^{\alpha_{i}}_{p}(t)-
\sum_{k=1}^{m}V_{p}^{\delta_{k}}(t)x_{ki}+zV_{p}^{\delta_{i}}(t)\right) \alpha_{i}+
\sum_{i=1}^{2h} V^{\beta_{i}}_{p}(t) \beta_{i} + 
\sum_{i=1}^{m} V^{\delta_{i}}_{p}(t) \gamma_{i}\\
&+\sum_{j=1}^{2}\left(\sum_{i=1}^{2}y_{ij}V_{p}^{\sigma_{i}}(t)+
\sum_{i=1}^{2}w_{ij}\left(V_{p}^{\xi_{i}}(t)+zV_{p}^{\sigma_{i}}(t)\right)\right)\epsilon_{j}
+\sum_{j=1}^{2}\left(\sum_{i=1}^{2}w_{ij}V_{p}^{\sigma_{i}}(t)\right)\epsilon_{j+2},\\
f'_{1}&(t)= \sum_{i=1}^{m} \left(F^{\alpha_{i}}_{1}(t)-
\sum_{k=1}^{m}F_{1}^{\delta_{k}}(t)x_{ki}+zF_{1}^{\delta_{i}}(t)\right) \alpha_{i}+
\sum_{i=1}^{2h} F^{\beta_{i}}_{1}(t) \beta_{i} + 
\sum_{i=1}^{m} F^{\delta_{i}}_{1}(t) \gamma_{i}\\
&+\sum_{j=1}^{2}\left(\sum_{i=1}^{2}y_{ij}F_{1}^{\sigma_{i}}(t)+
\sum_{i=1}^{2}w_{ij}\left(F_{1}^{\xi_{i}}(t)+zF_{1}^{\sigma_{i}}(t)\right)\right)\epsilon_{j}
+\sum_{j=1}^{2}\left(\sum_{i=1}^{2}w_{ij}F_{1}^{\sigma_{i}}(t)\right)\epsilon_{j+2},\\
g'_{1}&(t)= \sum_{i=1}^{m} \left(G^{\alpha_{i}}_{1}(t)-
\sum_{k=1}^{m}G_{1}^{\delta_{k}}(t)x_{ki}+zG_{1}^{\delta_{i}}(t)\right) \alpha_{i}+
\sum_{i=1}^{2h} G^{\beta_{i}}_{1}(t) \beta_{i} + 
\sum_{i=1}^{m} G^{\delta_{i}}_{1}(t) \gamma_{i}\\
&+\sum_{j=1}^{2}\left(\sum_{i=1}^{2}y_{ij}G_{1}^{\sigma_{i}}(t)+
\sum_{i=1}^{2}w_{ij}\left(G_{1}^{\xi_{i}}(t)+zG_{1}^{\sigma_{i}}(t)\right)\right)\epsilon_{j}
+\sum_{j=1}^{2}\left(\sum_{i=1}^{2}w_{ij}G_{1}^{\sigma_{i}}(t)\right)\epsilon_{j+2}.
\end{split}
\end{align*}

Using the items (1)--(4), we conclude that
\begin{equation}
\label{eq:dominant-term-in-middle-hs}
\bigwedge_{i=1}^{h} u'_{i}(t)\wedge\overline{u'_{i}(t)}=
c\cdot\beta_{1}\wedge\cdots\wedge\beta_{2h} + \sum_{J} \phi^{J}(z)\omega_{J}.
\end{equation}
In the displayed equation above, \(c\) is a non-zero constant, \(\omega_{J}\)
is a \(2h\) form and \(\phi^{J}\in\mathbf{h}\).
The constant \(c\) is non-zero follows from the fact that 
\(\{u_{1},\ldots,u_{h}\}\) is a basis for \(I^{2,1}\) and 
\(I^{1,2}=\overline{I^{2,1}}\).

Observe that by the construction of our basis we have for each \(p\) and \(i\)
\begin{equation}
V_{q}^{\delta_{i}}(t)=\delta_{q}^{i}+\tilde{V}_{q}^{i}(t)
\end{equation}
for some holomorphic function \(\tilde{V}_{q}^{i}(t)\) with 
\(\tilde{V}_{q}^{i}\in\mathbf{h}\). 
Now we can re-write
\begin{align*}
v_{q}'(t)=(z-x_{qq})\alpha_{q}+\sum_{i=1}^{m}\phi^{i}_{q}(z)\alpha_{i}
+\sum_{i=1}^{2h} V^{\beta_{i}}_{q}(t) \beta_{i} + \sum_{i=1}^{m} 
\psi_{q}^{i}(z) \gamma_{i} + \gamma_{q} + \sum_{i=1}^{4}\theta_{q}^{i}(z)\epsilon_{i}
\end{align*}
for some functions \(\phi_{q}^{i},\psi_{q}^{i},\theta_{q}^{i}\in\mathbf{h}\). Therefore,
we have for each \(q\)
\begin{align}
\label{eq:dominant-2-2-hs}
\begin{split}
v_{q}'(t)\wedge\overline{v_{q}'(t)} & = (z-\bar{z}-x_{qq}+\bar{x}_{qq})
\alpha_{q}\wedge\gamma_{q} + \sum_{J}\phi_{q}^{J}(z)\omega_{J}.
\end{split}
\end{align}
Here \(\omega_{J}\) are two forms other than \(\alpha_{q}\wedge\gamma_{q}\)
and \(\phi_{q}^{J}\in \mathbf{h}\).

Next, let us compute
\begin{equation}
f'_{1}(t)\wedge\overline{f'_{1}(t)}\wedge
g'_{1}(t)\wedge\overline{g'_{1}(t)}.
\end{equation}
Using the same trick, we conclude that
\begin{equation}
f'_{1}(t)\wedge\overline{f'_{1}(t)}\wedge
g'_{1}(t)\wedge\overline{g'_{1}(t)}=(w_{11}\bar{w}_{12}-w_{12}\bar{w}_{11})\epsilon_{1}
\wedge\cdots\wedge \epsilon_{4} + \sum_{I}\psi^{J}\omega_{J}
\end{equation}
where \(\omega_{J}\) is a four form and \(\psi^{J}\in\mathbf{h}\).

Finally, we observe that 
\begin{equation}
w_{11}\bar{w}_{12}-w_{12}\bar{w}_{11} = w_{11}w_{22} - w_{12}w_{21}\ne 0
\end{equation}
since \(\{\xi_{1},\xi_{2}\}\) is a basis for \(I^{2,0}\oplus I^{0,2}\).

Taking all the estimates above
into accounts, we see that
\begin{align*}
\begin{split}
\bigwedge_{i=1}^{h} &u'_{i}(t)\wedge\overline{u'_{i}(t)}\wedge
\bigwedge_{i=1}^{m} v'_{i}(t)\wedge\overline{v'_{i}(t)}
\wedge f'_{1}(t)\wedge\overline{f'_{1}(t)}\wedge
g'_{1}(t)\wedge\overline{g'_{1}(t)}\\
&=(w_{11}\bar{w}_{12}-w_{12}\bar{w}_{11})\left(\prod_{i=1}^{m}(2\sqrt{-1}\operatorname{Im}(z)-2\sqrt{-1}
\operatorname{Im}(x_{ii}))+\phi(z)
\right)\Omega
\end{split}
\end{align*}
where
\begin{equation*}
\Omega = \bigwedge_{i=1}^{4}\epsilon_{i}\wedge 
\bigwedge_{i=1}^{m}\alpha_{i}\wedge\bigwedge_{i=1}^{2h}\beta_{i}\wedge
\bigwedge_{i=1}^{m}\gamma_{i}
\end{equation*}
and \(\phi\in\mathbf{h}\). Therefore we conclude that it is non-zero as
long as \(\operatorname{Im}(z)\gg 0\). 
\begin{theorem}
\label{thm:hs-example-ddbar}
Let notations be as above.
For \(t\in\Delta^{\ast}\) with \(|t|\ll 1\), we have
\begin{equation}
\mathcal{F}^{2}_{t}\cap \overline{\mathcal{F}^{2}_{t}}=(0).
\end{equation}
Hence the \(\partial\bar{\partial}\)-lemma holds on 
the non-K\"{a}hler Calabi--Yau threefolds constructed by Hashimoto and Sano
in \cite{2023-Hashimoto-Sano-examples-of-non-kahler-calabi-yau-3-folds-with-arbitrarily-large-b2}.
\end{theorem}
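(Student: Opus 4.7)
The plan is to leverage the machinery developed in the preceding section: having exhibited a multi-valued frame for $\mathcal{F}^{2}_{t}$ over a neighborhood of $0\in\Delta$, we reduce the statement $\mathcal{F}^{2}_{t}\cap\overline{\mathcal{F}^{2}_{t}}=(0)$ to the non-vanishing of the top exterior power
\begin{equation*}
\bigwedge_{i=1}^{h} u'_{i}(t)\wedge\overline{u'_{i}(t)}\wedge
\bigwedge_{i=1}^{m} v'_{i}(t)\wedge\overline{v'_{i}(t)}
\wedge f'_{1}(t)\wedge\overline{f'_{1}(t)}\wedge
g'_{1}(t)\wedge\overline{g'_{1}(t)}.
\end{equation*}
The key input is Lemma \ref{lem:monodromy-isomorphism}, which guarantees that Hypothesis \ref{assumption:isomorphism} holds for Hashimoto--Sano's degeneration, so that the bases $\delta_{k}$ for $I^{2,2}$ and $\sigma_{k}$ for $I^{3,1}\oplus I^{1,3}$ can be chosen compatibly with $N$, i.e.\ $N\delta_{k}=\alpha_{k}$ and $N\sigma_{k}=\xi_{k}$.

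First I would make the untwisting $u'_{p}:=u_{p}+zNu_{p}$ and similarly for $v'_{q}$, $f'_{1}$, $g'_{1}$, and expand each in the real basis $\{\alpha_{i},\beta_{i},\gamma_{i},\epsilon_{i}\}$. Using the asymptotic properties (1)--(4) of the holomorphic coefficients (all decay in the class $\mathbf{h}$ except for the ``diagonal'' ones), isolate the dominant terms as $\operatorname{Im}(z)\to\infty$. Concretely, the $\bigwedge u'_{i}\wedge\overline{u'_{i}}$ factor contributes a non-zero multiple of $\beta_{1}\wedge\cdots\wedge\beta_{2h}$ because $\{u_{p}\}$ spans $I^{2,1}$ and $I^{1,2}=\overline{I^{2,1}}$; each $v'_{q}\wedge\overline{v'_{q}}$ contributes a dominant $(z-\bar{z}-x_{qq}+\bar{x}_{qq})\alpha_{q}\wedge\gamma_{q}$ term, so the product over $q$ yields a polynomial in $\operatorname{Im}(z)$ of degree $m$ with leading coefficient $\prod 2\sqrt{-1}$, modulo $\mathbf{h}$-corrections.

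Next I would handle the new ingredient, namely $f'_{1}\wedge\overline{f'_{1}}\wedge g'_{1}\wedge\overline{g'_{1}}$. After expanding in the $\epsilon$-basis and using that $g_{1}\to\xi_{1}\in I^{2,0}$ and $f_{1}\to\sigma_{1}\in I^{3,1}$ at $t=0$, the dominant contribution is
\begin{equation*}
(w_{11}\bar{w}_{12}-w_{12}\bar{w}_{11})\,\epsilon_{1}\wedge\epsilon_{2}\wedge\epsilon_{3}\wedge\epsilon_{4}
\end{equation*}
modulo $\mathbf{h}$. The coefficient equals $w_{11}w_{22}-w_{12}w_{21}$ (since $\bar{w}_{12}=w_{22}$ and $\bar{w}_{11}=w_{21}$), which is non-zero precisely because $\{\xi_{1},\xi_{2}\}$ is a basis of $I^{2,0}\oplus I^{0,2}$.

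Multiplying the three dominant contributions together yields a term of the form $c\cdot(\operatorname{Im}z)^{m}\,\Omega+\phi(z)\Omega$ with $\phi\in\mathbf{h}$ and $c\ne 0$, where $\Omega$ is the top wedge of the full real basis; this is non-zero for $\operatorname{Im}(z)\gg 0$. Consequently $\mathcal{F}^{2}_{t}\cap\overline{\mathcal{F}^{2}_{t}}=(0)$ for $|t|\ll 1$, which combined with the vanishing of $\mathrm{H}^{i}(X;\mathcal{O}_{X})$ and $\mathrm{H}^{0}(X;\Omega^{j}_{X})$ established by Hashimoto--Sano (giving opposedness automatically in the remaining weights) and the $E_{1}$-degeneration of the Hodge--de Rham spectral sequence \cite{2008-Peters-Steenbrink-mixed-Hodge-structures} yields the $\partial\bar{\partial}$-lemma via the equivalence (i)$\Leftrightarrow$(ii) recalled in \S\ref{subsection:ddbar-lemma}. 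The main bookkeeping obstacle will be verifying that all the off-dominant cross terms in the fourfold wedge $f'_{1}\wedge\overline{f'_{1}}\wedge g'_{1}\wedge\overline{g'_{1}}$ that mix the $\epsilon_{j}$ and $\epsilon_{j+2}$ components really land in $\mathbf{h}$ after the $zN$-twist; this is where the constraint $N\sigma_{k}=\xi_{k}$ (enforced by rescaling) is used crucially to ensure the leading $\epsilon_{1}\wedge\cdots\wedge\epsilon_{4}$ coefficient computed above is independent of $z$.
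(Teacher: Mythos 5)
Your proposal is correct and follows essentially the same route as the paper: the paper's proof consists precisely of the ``Main calculations'' subsection, which untwists the frame $\{u'_p, v'_q, f'_1, g'_1\}$, isolates the dominant terms $c\,\beta_1\wedge\cdots\wedge\beta_{2h}$, $\prod_q(2\sqrt{-1}\operatorname{Im}(z)-2\sqrt{-1}\operatorname{Im}(x_{qq}))\,\alpha_q\wedge\gamma_q$, and $(w_{11}\bar{w}_{12}-w_{12}\bar{w}_{11})\,\epsilon_1\wedge\cdots\wedge\epsilon_4$ exactly as you describe, and then deduces the $\partial\bar{\partial}$-lemma from the vanishing of $\mathrm{H}^{i}(X;\mathcal{O}_X)$ and $\mathrm{H}^{0}(X;\Omega^j_X)$ via Friedman's Corollary 1.6 (which is the packaged form of the degeneration-plus-opposedness argument you spell out).
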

\begin{proof}
The first statement \(\mathcal{F}^{2}_{t}\cap \overline{\mathcal{F}}^{2}_{t}=(0)\)
follows from our previous calculation, while the second one follows from
\cite{2019-Friedman-the-ddbar-lemma-for-general-clemens-manifolds}*{Corollary 1.6}.
\end{proof}

\begin{theorem}
\label{thm:hs-example-polarized}
The Hodge structure on \(\mathrm{H}^{3}(X;\mathbb{C})\)
is polarized by \(Q(C-,-)\).
\end{theorem}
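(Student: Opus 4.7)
The plan is to mirror the block-matrix positivity argument used in the proof of Theorem~\ref{theorem:polarized-hs}, now applied to the multivalued frame $\{u'_p, v'_q, f'_1, g'_1\}$ of $\mathcal{F}^2_t$ constructed above. The new feature relative to Theorem~\ref{theorem:polarized-hs} is the presence of the summands $I^{3,1}\oplus I^{2,0}$ in $\mathcal{F}^2_{\lim}$ with $N\sigma_1=\xi_1\ne 0$: this makes the degeneration infinite-distance, and the polarization must be extracted from a coupled $(f,g)$-block. First I verify the first Hodge--Riemann relation $Q(\mathcal{F}^2_t,\mathcal{F}^2_t)=0$: since $N$ is an infinitesimal $Q$-isometry, this pairing equals $Q(\mathcal{F}^2_{\lim},\mathcal{F}^2_{\lim})$, and since every summand of $\mathcal{F}^2_{\lim}=I^{3,1}\oplus I^{2,2}\oplus I^{2,1}\oplus I^{2,0}$ has first Hodge index at least $2$ while $Q(I^{p,q},I^{r,s})=0$ unless $p+r=q+s=3$, no pair pairs nontrivially. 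Because $\tilde Q(\Omega_t,\bar\Omega_t)>0$ on the CY line $\mathcal{F}^3_t=\mathrm{H}^{3,0}(X_t)$ is automatic, the goal reduces to showing that the Hermitian form $M(\alpha,\beta):=-\sqrt{-1}\int_X\alpha\wedge\bar\beta$ on $\mathcal{F}^2_t$ has signature $(h+m+1,\,1)$ with the unique negative direction given by $\mathcal{F}^3_t$.

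Second, I would compute the matrix of $M$ blockwise using $M=\sqrt{-1}Q$ and the frame expansions, extracting the dominant behavior as $\operatorname{Im}(z)\to\infty$. The $u$-$u$ block converges to the positive-definite polarization matrix on $I^{2,1}\subset\mathrm{Gr}_3^{\mathcal{W}(M)}$, which is polarized via the identification $\mathrm{Gr}_3^{\mathcal{W}(M)}\mathrm{H}^3(X;\mathbb{C})\cong\mathrm{H}^3(X_1;\mathbb{C})$ coming from the weight spectral sequence (using that $\mathrm{H}^1(S)=\mathrm{H}^3(S)=0$ and $\mathrm{H}^3(X_2)=0$) together with the polarized Hodge structure on the smooth projective threefold $X_1$. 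The $v$-$v$ block equals $2\operatorname{Im}(z)(q_{rs})+O(1)$ with $(q_{rs})$ positive-definite, coming from the Cattani--Kaplan--Schmid polarization on $\mathrm{Gr}_4^{\mathcal{W}(M)}$ via $N$, available thanks to Lemma~\ref{lem:monodromy-isomorphism}. The coupled $(f,g)$-block has leading form
\[
\begin{pmatrix}-2\rho\operatorname{Im}(z) & -\sqrt{-1}\,\rho \\ \sqrt{-1}\,\rho & 0\end{pmatrix}+O(e^{-2\pi\operatorname{Im}(z)}),
\]
with $\rho:=-Q(\sigma_1,\xi_2)>0$ by CKS polarization on the primitive piece $P_1\cap I^{3,1}\subset\mathrm{Gr}_4^{\mathcal{W}(M)}$; all remaining cross entries either lie in $\mathbf{h}$ or stay of order $O(1)$.

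Third, the $(f,g)$-block alone has eigenvalues $\sim -2\rho\operatorname{Im}(z)$ and $\sim \rho/(2\operatorname{Im}(z))$, with opposite signs. The large negative one is tracked by an $f'_1$-dominated direction which (upon comparing the limit of $f'_1$ with the flat section of the canonical extension of $\mathcal{F}^3$) coincides with $\mathcal{F}^3_t$, while the small positive one is a $g'_1$-dominated direction lying in $\mathrm{H}^{2,1}(X_t)$. Combined with the positive $u$-block and the linearly-growing $v$-block, iterated Schur complements eliminate the cross-couplings — first absorbing the $v$-entries using the large $v$-eigenvalues, then the $u$-entries using the bounded-below $u$-eigenvalues — leaving the residual $(f,g)$-signature $(+,-)$ intact. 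This establishes the overall signature $(h+m+1,\,1)$ and the second Hodge--Riemann relation on $\mathrm{H}^{2,1}(X)$.

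The main technical obstacle is the fragility of the small positive eigenvalue $\sim\rho/(2\operatorname{Im}(z))$ of the $(f,g)$-block, which decays to zero as $\operatorname{Im}(z)\to\infty$ and could in principle be overwhelmed by corrections. Fortunately, every correction in the frame expansion lies in $\mathbf{h}$, hence is of exponential order $O(e^{-2\pi\operatorname{Im}(z)})$, so it is dominated by any inverse-polynomial quantity and the sign analysis is stable. A secondary obstacle is the precise identification of $\mathcal{F}^3_t$ inside the span of $f'_1$ and $g'_1$: since $I^{3,0}=0$, the CY line is neither $\mathbb{C}f'_1$ nor $\mathbb{C}g'_1$ but a specific $z$-dependent combination determined by the frame coefficients $F_1^{\xi_i}(t),G_1^{\sigma_i}(t)$. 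Invoking Schmid's polarization formula $\sqrt{-1}^{p-q}Q(u,N^\ell\bar u)>0$ on primitive pieces of $\mathrm{Gr}_{n+\ell}^{\mathcal{W}(M)}$ at $(p,q,\ell)=(3,1,1)$ is what pins down the sign of $\rho$ and hence the direction of the negative eigenvector.
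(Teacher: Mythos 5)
Your overall architecture is exactly what the paper's two-line proof is gesturing at (re-run the block-matrix argument of Theorem~\ref{theorem:polarized-hs} in the frame $\{u'_p,v'_q,f'_1,g'_1\}$), and several of your refinements are genuinely needed and correct: the conclusion must be phrased as a signature statement (signature $(h+m+1,1)$ on $\mathcal{F}^2_t$ with the negative direction absorbed by $\mathcal{F}^3_t$, which is $M$-orthogonal to $\mathrm{H}^{2,1}$ for type reasons), since $M$ is negative on $\mathrm{H}^{3,0}$ and the full Gram matrix cannot literally be positive definite; your leading form for the coupled $(f,g)$-block is right (with $f'_1\to\sigma_1+z\xi_1$, $g'_1\to\xi_1$, the vanishings $Q(\sigma_1,\bar\sigma_1)=Q(\xi_1,\bar\xi_1)=0$ and $Q(\sigma_1,\bar\xi_1)=-\rho$ give determinant $-\rho^2<0$ and eigenvalues $\sim-2\rho\operatorname{Im}(z)$ and $\sim\rho/(2\operatorname{Im}(z))$); and your observation that the fragile small positive eigenvalue survives because every correction and every cross term against the $(f,g)$-block lies in $(\text{poly in }z)\cdot\mathbf{h}$, hence decays faster than any inverse power of $\operatorname{Im}(z)$, is precisely the point the paper suppresses.

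The one step that does not stand as written is your source for the two positivity constants on $\mathrm{Gr}_4^{\mathcal{W}(M)}$: you derive $(q_{rs})>0$ and $\rho>0$ from ``CKS polarization'' / ``Schmid's polarization formula'' on the primitive pieces. The Schmid--Cattani--Kaplan--Schmid package takes a \emph{polarized} variation of Hodge structure over $\Delta^{\ast}$ as input, and in this non-K\"ahler setting the polarizability of the variation is exactly the content of Theorem~\ref{thm:hs-example-polarized}; quoting it here is circular. The paper's route — and the reason its proof insists on formulas \eqref{eq:pairing-gr33} and \eqref{eq:pairing-gr24} — is to read the positivity off the \emph{central fiber}: by Lemma~\ref{lem:monodromy-isomorphism}, $\mathrm{Gr}_4^{\mathcal{W}(M)}\mathrm{H}^3(X;\mathbb{C})\cong\operatorname{ker}\bigl(\mathrm{H}^2(S;\mathbb{C})\to\mathrm{H}^4(X_1;\mathbb{C})\oplus\mathrm{H}^4(X_2;\mathbb{C})\bigr)$, which lies in $\mathrm{H}^2_{\mathrm{prim}}(S;\mathbb{C})$ for the ample class $2h_1+2h_2+2h_3=[S]|_S$ on the K3 surface $S$; formula \eqref{eq:pairing-gr24} identifies $\langle u,N\bar v\rangle$ (up to the stated twist and the sign $N=-N_{\mathrm{st}}$ of the paper's Remark) with the intersection form of $S$ on this kernel, and the classical Hodge--Riemann relations on $S$ then deliver the definiteness on the $(1,1)$-part (your $(q_{rs})$) and the opposite, also favorable, sign on the $(2,0)+(0,2)$-part (your $\rho$). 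You do take the analogous geometric route for the $u$-block via $\mathrm{Gr}_3^{\mathcal{W}(M)}\cong\mathrm{H}^3(X_1;\mathbb{C})=\mathrm{H}^3_{\mathrm{prim}}(X_1;\mathbb{C})$; do the same for $\mathrm{Gr}_4$ and the rest of your Schur-complement analysis goes through.
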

\begin{proof}
This follows from the argument of Theorem \ref{theorem:polarized-hs}.
From the formula
\eqref{eq:pairing-gr33} and \eqref{eq:pairing-gr24} which is true without any assumption
on the shape of the limiting Hodge diamond,
the second Hodge--Riemann bilinear relation still holds on \(\mathrm{H}^{2,1}(X)\).
\end{proof}

%\quad \\

%\noindent{\bf Data availability} Data sharing is not applicable to this article as no datasets
%were generated or analyzed during the current study.
%\quad \\

%\noindent{\bf Conflict of interest} The author declares no conflict of interest.
% \bib, bibdiv, biblist are defined by the amsrefs package.
\begin{bibdiv}
\begin{biblist}

\bib{1990-Candelas-Green-Hubsch-rolling-among-calabi-yau-vacua}{article}{
      author={Candelas, P.},
      author={Green, P.},
      author={{H\"{u}bsch}, Tristan},
       title={Rolling among {C}alabi--{Y}au vacua},
        date={1990},
     journal={Nuclear Physics B},
      volume={330},
      number={1},
       pages={49\ndash 102},
}

\bib{2023-Collins-Gukov-Picard-Yau-special-lagrangian-cycles-and-calabi-yau-transitions}{article}{
      author={Collins, Tristan~C.},
      author={Gukov, Sergei},
      author={Picard, Sebastien},
      author={Yau, Shing-Tung},
       title={Special {L}agrangian cycles and {C}alabi--{Y}au transitions},
        date={2023},
        ISSN={0010-3616},
     journal={Comm. Math. Phys.},
      volume={401},
      number={1},
       pages={769\ndash 802},
         url={https://doi.org/10.1007/s00220-023-04655-3},
      review={\MR{4604907}},
}

\bib{2024-Chen}{article}{
      author={Chen, Kuan-Wen},
       title={On the {H}odge structure of global smoothings of normal crossing
  varieties},
        date={2024},
     journal={in preparation},
}

\bib{1986-Cattani-Kaplan-Schmid-degeneration-of-hodge-structures}{article}{
      author={Cattani, E.},
      author={Kaplan, A.},
      author={Schmid, W.},
       title={{Degeneration of Hodge structures}},
        date={1986},
     journal={Ann. of Math.},
      volume={123},
       pages={457\ndash 535},
}

\bib{2024-Collins-Picard-Yau-stability-of-the-tangent-bundle-through-conifold-transitions}{article}{
      author={Collins, Tristan},
      author={Picard, Sebastien},
      author={Yau, Shing-Tung},
       title={Stability of the tangent bundle through conifold transitions},
        date={2024},
        ISSN={0010-3640},
     journal={Comm. Pure Appl. Math.},
      volume={77},
      number={1},
       pages={284\ndash 371},
         url={https://doi.org/10.1002/cpa.22135},
      review={\MR{4666627}},
}

\bib{1971-Deligne-theorie-de-Hodge-II}{article}{
      author={Deligne, Pierre},
       title={Th\'{e}orie de {H}odge. {II}},
        date={1971},
        ISSN={0073-8301},
     journal={Institut des Hautes \'{E}tudes Scientifiques. Publications
  Math\'{e}matiques},
      number={40},
       pages={5\ndash 57},
      review={\MR{498551}},
}

\bib{1977-Deligne-Griffiths-Morgan-Sullivan-real-homotopy-theory-of-kahler-manifolds}{article}{
      author={Deligne, P.},
      author={Griffiths, F.},
      author={Morgan, J.},
      author={Sullivan, D.},
       title={Real homotopy theory of {K}\"{a}hler manifolds},
        date={1977},
        ISSN={0042-1316},
     journal={Akademiya Nauk SSSR i Moskovskoe Matematicheskoe Obshchestvo.
  Uspekhi Matematicheskikh Nauk},
      volume={32},
      number={3(195)},
       pages={119\ndash 152, 247},
        note={Translated from the English by Ju. I. Manin},
      review={\MR{0460700}},
}

\bib{2012-Fu-Li-Yau-balanced-metrics-on-non-kahler-calabi-yau-threefolds}{article}{
      author={Fu, Jixiang},
      author={Li, Jun},
      author={Yau, Shing-Tung},
       title={Balanced metrics on non-{K}\"{a}hler {C}alabi--{Y}au threefolds},
        date={2012},
        ISSN={0022-040X},
     journal={Journal of Differential Geometry},
      volume={90},
      number={1},
       pages={81\ndash 129},
      review={\MR{2891478}},
}

\bib{2019-Friedman-the-ddbar-lemma-for-general-clemens-manifolds}{article}{
      author={Friedman, Robert},
       title={The {$\partial\overline\partial$}-lemma for general {C}lemens
  manifolds},
        date={2019},
        ISSN={1558-8599},
     journal={Pure Appl. Math. Q.},
      volume={15},
      number={4},
       pages={1001\ndash 1028},
      review={\MR{4085665}},
}

\bib{1986-Friedman-simultaneous-resolution-of-threefold-double-points}{article}{
      author={Friedman, Robert},
       title={Simultaneous resolution of threefold double points},
        date={1986},
        ISSN={0025-5831},
     journal={Math. Ann.},
      volume={274},
      number={4},
       pages={671\ndash 689},
      review={\MR{848512}},
}

\bib{2014-Fujisawa-polarization-on-limiting-mixed-hodge-structures}{article}{
      author={Fujisawa, Taro},
       title={Polarizations on limiting mixed {H}odge structures},
        date={2014},
     journal={Journal of Singularities},
      volume={8},
       pages={146\ndash 193},
      review={\MR{3395244}},
}

\bib{2023-Hashimoto-Sano-examples-of-non-kahler-calabi-yau-3-folds-with-arbitrarily-large-b2}{article}{
      author={Hashimoto, Kenji},
      author={Sano, Taro},
       title={Examples of non-{K}\"{a}hler {C}alabi--{Y}au 3-folds with
  arbitrarily large {$b_2$}},
        date={2023},
        ISSN={1465-3060},
     journal={Geom. Topol.},
      volume={27},
      number={1},
       pages={131\ndash 152},
  url={https://doi-org.ezp-prod1.hul.harvard.edu/10.2140/gt.2023.27.131},
      review={\MR{4584262}},
}

\bib{2018-Lee-a-Hodge-theoretic-criterion-for-finite-WP-degenerations-over-a-higher-dimensional-base}{article}{
      author={Lee, Tsung-Ju},
       title={A {H}odge theoretic criterion for finite {W}eil--{P}etersson
  degenerations over a higher dimensional base},
        date={2018},
        ISSN={1073-2780},
     journal={Mathematical Research Letters},
      volume={25},
      number={2},
       pages={617\ndash 647},
      review={\MR{3826838}},
}

\bib{2022-Li-polarized-hodge-structures-for-clemens-manifolds}{article}{
      author={Li, Chi},
       title={Polarized {H}odge structures for {C}lemens manifolds},
        date={202202},
      eprint={2202.10353},
         url={https://arxiv.org/pdf/2202.10353.pdf},
}

\bib{2018-Lee-Lin-Wang-towards-A+B-theory-in-conifold-transitions-for-CY-threefolds}{article}{
      author={Lee, Yuan-Pin},
      author={Lin, Hui-Wen},
      author={Wang, Chin-Lung},
       title={Towards {$A+B$} theory in conifold transitions for
  {C}alabi--{Y}au threefolds},
        date={2018},
        ISSN={0022-040X},
      volume={110},
      number={3},
       pages={495\ndash 541},
      review={\MR{3880232}},
}

\bib{2019-Popovici-holomorphic-deformations-of-balanced-calabi-yau-d-dbar-manifolds}{article}{
      author={Popovici, Dan},
       title={Holomorphic deformations of balanced {C}alabi--{Y}au
  {$\partial\overline\partial$}-manifolds},
        date={2019},
        ISSN={0373-0956},
     journal={Ann. Inst. Fourier (Grenoble)},
      volume={69},
      number={2},
       pages={673\ndash 728},
         url={https://mathscinet.ams.org/mathscinet-getitem?mr=3978322},
      review={\MR{3978322}},
}

\bib{2008-Peters-Steenbrink-mixed-Hodge-structures}{book}{
      author={Peters, Chris A.~M.},
      author={Steenbrink, Joseph H.~M.},
       title={Mixed {H}odge structures},
      series={Ergebnisse der Mathematik und ihrer Grenzgebiete. 3. Folge. A
  Series of Modern Surveys in Mathematics [Results in Mathematics and Related
  Areas. 3rd Series. A Series of Modern Surveys in Mathematics]},
   publisher={Springer-Verlag, Berlin},
        date={2008},
      volume={52},
        ISBN={978-3-540-77015-2},
      review={\MR{2393625}},
}

\bib{1987-Reid-the-moduli-space-of-3-folds-with-k-0-may-nevertheless-be-irreducible}{article}{
      author={Reid, Miles},
       title={The moduli space of {$3$}-folds with {$K=0$} may nevertheless be
  irreducible},
        date={1987},
        ISSN={0025-5831},
     journal={Mathematische Annalen},
      volume={278},
      number={1-4},
       pages={329\ndash 334},
      review={\MR{909231}},
}

\bib{1975-Steenbrink-limits-of-hodge-structures}{article}{
      author={Steenbrink, Joseph},
       title={Limits of {H}odge structures},
        date={1975/76},
        ISSN={0020-9910},
     journal={Invent. Math.},
      volume={31},
      number={3},
       pages={229\ndash 257},
      review={\MR{429885}},
}

\bib{2003-Wang-quasi-hodge-metrics-and-canonical-singularities}{article}{
      author={Wang, Chin-Lung},
       title={Quasi-{H}odge metrics and canonical singularities},
        date={2003},
        ISSN={1073-2780},
     journal={Math. Res. Lett.},
      volume={10},
      number={1},
       pages={57\ndash 70},
      review={\MR{1960124}},
}

\bib{1997-Wang-on-the-incompleteness-of-the-weil-petersson-metric-along-degenerations-of-calabi-yau-manifolds}{article}{
      author={Wang, Chin-Lung},
       title={On the incompleteness of the {W}eil--{P}etersson metric along
  degenerations of {C}alabi--{Y}au manifolds},
        date={1997},
        ISSN={1073-2780},
     journal={Math. Res. Lett.},
      volume={4},
      number={1},
       pages={157\ndash 171},
      review={\MR{1432818}},
}

\bib{1978-Yau-on-the-ricci-curvature-of-a-compact-kahler-manifold-and-the-complex-monge-ampere-equation-i}{article}{
      author={Yau, Shing~Tung},
       title={On the {R}icci curvature of a compact {K}\"{a}hler manifold and
  the complex {M}onge--{A}mp\`ere equation. {I}},
        date={1978},
        ISSN={0010-3640},
     journal={Commun. Pure Appl. Math.},
      volume={31},
      number={3},
       pages={339\ndash 411},
      review={\MR{480350}},
}

\end{biblist}
\end{bibdiv}
\end{document}